\documentclass[reqno]{amsart}
\usepackage[T1]{fontenc}
\usepackage{latexsym,amssymb,amsmath,xcolor}

\usepackage[all,ps]{xy}
\usepackage{amsthm}
\usepackage{longtable}
\usepackage{epsfig}
\usepackage{mathrsfs}
\usepackage{hhline}
\usepackage{epic}
\usepackage{enumerate}
\usepackage{xcolor}
\usepackage{pgf,tikz}
\usepackage{mathrsfs}
\usetikzlibrary{arrows,shapes,calc}

\DeclareMathOperator{\GL}{GL}
\DeclareMathOperator{\sgn}{sgn}

\DeclareMathOperator{\End}{End}

\DeclareMathOperator{\Irr}{Irr}
\DeclareMathOperator{\Res}{Res}
\DeclareMathOperator{\Ind}{Ind}
\DeclareMathOperator{\Gal}{Gal}
\DeclareMathOperator{\Syl}{Syl}
\DeclareMathOperator{\Inf}{Inf}

\newcommand{\tSym}{\widetilde{S}}
\newcommand{\sym}{\mathfrak{S}}
\newcommand{\tAlt}{\widetilde{A}}
\newcommand{\alt}{\mathfrak{A}}

\newcommand{\legendre}[2]{\left(\frac{#1}{#2}\right)}

 \newcommand{\C}{\mathbb{C}}

 \newcommand{\Q}{\mathbb{Q}}
 \newcommand{\Z}{\mathbb{Z}}

\newdimen\shadedBaseline\shadedBaseline=-4mm
\newcount\tableauRow\newcount\tableauCol
\newcommand\ShadedTableau[2][\relax]{%
  \begin{tikzpicture}[scale=0.4,draw/.append style={thick,black},baseline=\shadedBaseline]
    \ifx\relax#1\relax%
    \else 
      \foreach\bx in {#1} { \filldraw[blue!20]\bx+(-.5,-.5)rectangle++(.5,.5); }
    \fi
    \tableauRow=0
    \foreach \Row in {#2} {
       \tableauCol=1
       \foreach\k in \Row {
          \draw(\the\tableauCol,\the\tableauRow)+(-.5,-.5)rectangle++(.5,.5);
          \draw(\the\tableauCol,\the\tableauRow)node{\k};
          \global\advance\tableauCol by 1
       }
       \global\advance\tableauRow by -1
    }
  \end{tikzpicture}%
}
\newcommand\diag[3][\relax]{%
  \begin{tikzpicture}[scale=0.4,draw/.append style={thick,black},baseline=\shadedBaseline]
    \ifx\relax#1\relax%
    \else 
      \foreach\bx in {#1} { \filldraw[blue!20,dashed]\bx+(-.5,-.5)rectangle++(.5,.5); }
    \fi
    \tableauRow=0
    \foreach \Row in {#2} {
       \tableauCol=1
       \foreach\k in \Row {
          \draw(\the\tableauCol,\the\tableauRow)+(-.5,-.5)rectangle++(.5,.5);
          \draw(\the\tableauCol,\the\tableauRow)node{\k};
          \global\advance\tableauCol by 1
       }
       \global\advance\tableauRow by -1
    }
    \foreach \x in {#3} {
      \draw[red,dashed](\x+3,-2-\x)+(-.5,-.5)rectangle++(.5,.5);
    }
  \end{tikzpicture}%
}

\newcommand\frob[7][\relax]{%
  \begin{tikzpicture}[scale=0.4,draw/.append style={black},baseline=\shadedBaseline]
    \ifx\relax#1\relax%
    \else 
      \foreach\bx in {#1} { \filldraw[gray!20,dashed]\bx+(-.5,-.5)rectangle++(.5,.5); }
    \fi
    \tableauRow=0
    \foreach \Row in {#2} {
       \tableauCol=1
       \foreach\k in \Row {
          \draw(\the\tableauCol,\the\tableauRow)+(-.5,-.5)rectangle++(.5,.5);
          \draw(\the\tableauCol,\the\tableauRow)node{\k};
          \global\advance\tableauCol by 1
       }
       \global\advance\tableauRow by -1
    }
        \foreach \x in {#3} {
      \draw[gray!80,dashed](\x+3,-2-\x)+(-.5,-.5)rectangle++(.5,.5);
    }
    \foreach\bx in {#4} {
        \draw[thick]\bx+(-.3,0)rectangle++(.3,.0);
    }
    \foreach\bx in {#5} {
        \draw[thick]\bx+(0,.3)rectangle++(0,-.3);
    }
    \foreach\bx in {#6} {
        \draw[thick,densely dotted]\bx+(-.3,0)--++(.3,.0);
    }
    \foreach\bx in {#7} {
        \draw[thick,densely dotted]\bx+(0,.3)--++(0,-.3);
    }

  \end{tikzpicture}%
}

\newtheorem{theorem}{Theorem}[section] 
\newtheorem{lemma}[theorem]{Lemma}     
\newtheorem{corollary}[theorem]{Corollary}
\newtheorem{proposition}[theorem]{Proposition}

\theoremstyle{definition}
\newtheorem{remark}[theorem]{Remark}
\newtheorem{notation}[theorem]{Notation}

\title[]
{On $\boldsymbol{p}$-rationality in double covers of
symmetric and alternating groups}

\author{Olivier Brunat}

\address{Universit\'e Paris Cit\'e\\ Institut de math\'ematiques de
         Jussieu -- Paris Rive Gauche\\ UFR de math\'e\-matiques\\ Case
7012\\ 75205 Paris Cedex 13\\
         France.}
\email{olivier.brunat@imj-prg.fr}

\author{Rishi Nath}

\address{York College, City University of New York, 
94--20 Guy R. Brewer Blvd. \\
Jamaica, NY 11435\\
USA
}
\email{rnath@york.cuny.edu}

\subjclass[2020]{Primary 20C15; Secondary 20C30, 20C25, 11R32}

\dedicatory{Dedicated to the memory of Paul Fong and Bhama
Srinivasan}

\begin{document} 
\maketitle

\begin{abstract} 
We construct, for every odd prime $p$, a bijection between the $p'$-spin
characters $\widetilde S_n$ and those of a Sylow normalizer which, in
particular, is equivariant under the Galois action detecting
$p$-rationality. An analogous result holds for $\widetilde A_n$. We also
obtain a blockwise refinement for height-zero spin characters and their
Brauer correspondents. 
%
\end{abstract}

\section{Introduction}

Let $p$ be a prime and let $G$ be a finite group. An irreducible character
of $G$ is called \emph{$p$-rational} if its values lie in a cyclotomic
field $\mathbb Q_m$ for some integer $m$ not divisible by $p$. 

The Galois--McKay conjecture, proposed by G. Navarro
in~\cite{NavarroGalois}, is a Galois refinement of the now-proven McKay
conjecture, whose proof, following more than two decades of work by many
authors, was finally completed by Cabanes and Sp\"ath \cite{CabSpath}. 
It predicts the existence of a bijection between the irreducible
characters of $p'$-degree of $G$ and those of $\operatorname{N}_G(P)$,
where $P$ is a Sylow $p$-subgroup of $G$, which is equivariant under the
action of a certain subgroup $\mathcal H$ of the absolute Galois group
$\operatorname{Gal}(\overline{\mathbb{Q}}/\mathbb{Q})$. 
In particular, one consequence of this conjecture is that the numbers of
$p$-rational characters of $p'$-degree agree on the global and local
sides.

Recent progress on Galois refinements of the McKay conjecture includes the
reduction theorem of Navarro--Sp\"ath--Vallejo~\cite{NavarroSpaethVallejo}
and, more recently, the proof by Ruhstorfer--Schaeffer
Fry~\cite{RuhstorferSchaefferFry} of the Isaacs--Navarro Galois
conjecture~\cite{IsaacsNavarro2002} . The latter involves a subgroup
$\mathcal H_0$ of $\mathcal H$ and yields an $\mathcal H_0$-equivariant
bijection. For odd primes, however, $\mathcal H_0$ is strictly contained
in the subgroup $G(p)$ detecting $p$-rationality, so this result does not
by itself give a correspondence preserving $p$-rationality.
For alternating groups, the authors construted in  \cite{BrNa} a
Galois-equivariant McKay correspondence. More recently, they studied the
Galois action on spin characters and its compatibility with Littlewood
decompositions \cite{BrNaMZ}.

\medskip
The aim of this paper is to study this phenomenon for spin characters of
double covers of symmetric and alternating groups. We write
\[
1\longrightarrow\langle z\rangle
\longrightarrow\widetilde S_n
\longrightarrow S_n
\longrightarrow1
\]
for a double cover of $S_n$, and a character of $\widetilde S_n$ is
called spin when $z$ acts as $-1$. The sign character of $S_n$ lifts
to a linear character $\varepsilon$ of $\widetilde S_n$, and tensoring
by $\varepsilon$ defines the usual association of spin characters.
An irreducible character whose degree is prime to $p$ will be called a
$p'$-character.
For any subgroup $G\leq \widetilde S_n$ containing $z$, we write
$\operatorname{Irr}_{p'}^{\mathrm{spin}}(G)$ for its irreducible spin
$p'$-characters.
\smallskip 

Let $G(p)$ denote the subgroup of $\mathcal H$ consisting of those
automorphisms which act trivially on all roots of unity of order prime to
$p$. Then an irreducible character is $p$-rational if and only if it is
fixed by $G(p)$.
Let $P\in\operatorname{Syl}_p(\widetilde S_n)$ and set $N=N_{\widetilde
S_n}(P)$. We prove the following result.
\begin{theorem}
\label{thm:main} Assume that $p$ is an odd prime.
There exists a bijection
\[
\Omega:
\operatorname{Irr}_{p'}^{\mathrm{spin}}(\widetilde S_n)
\longrightarrow
\operatorname{Irr}_{p'}^{\mathrm{spin}}(N)
\]
which is compatible with association and $G(p)$-equivariant. In
particular, the numbers of $p$-rational spin characters of $p'$-degree of
$\widetilde S_n$ and of $N$ are equal.
\end{theorem}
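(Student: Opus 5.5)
We will build $\Omega$ combinatorially on both sides and compare the resulting labels with their fields of values.

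\emph{Global side.} Spin characters of $\widetilde S_n$ are labelled by bar partitions $\lambda$ of $n$ into distinct parts. Each $\lambda$ gives one self-associate character or a pair of associate characters, according to the parity of $n-\ell(\lambda)$. The degree is given by Schur's bar-length formula. Using the $p$-bar core and $p$-bar quotient, together with the $p$-adic expansion $n=\sum a_i p^i$, we characterise spin $p'$-characters as follows. The $p$-bar core of $\lambda$ has size $a_0$, and the $p$-bar quotient (a bar partition together with $(p-1)/2$ partitions) decomposes recursively along the $p$-adic digits. This is analogous to Macdonald's criterion for $S_n$.

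Field of values: the spin characters are rational except on the classes where associate pairs differ. There the values are $\pm\frac12\bigl(\sqrt{\pm\prod\lambda_i}\bigr)$ up to a sign factor $i^{(n-\ell(\lambda)+1)/2}$. An element $\sigma\in G(p)$ fixes $i$ and $\sqrt{q}$ for primes $q\neq p$, and acts on $\sqrt{\pm p}$ by a sign. So $\sigma$ swaps the two associates labelled by $\lambda$ exactly when $\sigma$ moves $\sqrt{p^{*}}$ and the number of parts of $\lambda$ divisible by $p$ is odd; here $p^{*}=(-1)^{(p-1)/2}p$. We therefore record, for each non-self-associate $p'$-label, the parity of the $p$-divisible parts. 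Call this the $G(p)$-type of the label.

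\emph{Local side.} Write $P=\prod_i P_{p^i}^{a_i}$. Then $N$ is a twisted central product of the groups $\widetilde N_{p^i}\wr S_{a_i}$ and $\widetilde S_{a_0}$; for odd $p$ the fixed-point part appears through $\widetilde S_{a_0}$-type factors. Its spin $p'$-characters arise as follows:
\begin{itemize}
\item by Clifford theory over $P$, from linear characters of $P/P'$;
\item from spin characters of the factors $N_{\widetilde S_{p^i}}(P_{p^i})$, where $\widetilde N_{p}$ is $\widetilde{(C_p\rtimes C_{p-1})}$;
\item assembled by twisted tensor products (Stembridge, Humphreys) and by the wreath construction, whose $p'$-characters are indexed by tuples of partitions.
\end{itemize}
We compute their values explicitly. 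The Galois-sensitive quantities again reduce to $\sqrt{\pm p}$, which comes from the Gauss sums in $\widetilde N_p$, and to the twisted product signs. We then match the labels so that the $G(p)$-type and association type agree. This gives a bijection that commutes with association, because both sides have compatible pairing of self-associate and non-self-associate labels. It is $G(p)$-equivariant, because on each side $G(p)$ acts either trivially or by swapping associates according to the same parity invariant.

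\emph{Main obstacle.} The hardest step is the local computation. We must determine precisely which spin $p'$-characters of $N$ have the irrationality $\sqrt{p^{*}}$, and show that the swap under $G(p)$ is governed by a parity matching the count of $p$-divisible parts. This requires tracking the twisted tensor product signs through the wreath products with $S_{a_i}$. We would first handle $n=p$, where $N=\widetilde{C_p\rtimes C_{p-1}}$ is explicit, then $n=p^k$ inductively, and finally the general case via the twisted product.
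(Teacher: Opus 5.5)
Your overall plan matches the paper's: label both sides by $p$-bar core towers, reduce to $N/P'$, assemble twisted and wreath products, then match association type and the sign by which $G(p)$ acts. However, the global Galois invariant you extract is wrong. The irrational values of $\xi_\lambda^\pm$ are, up to powers of $i$ and $\sqrt2$ (which $G(p)$ fixes), multiples of $\sqrt{\lambda_1\cdots\lambda_{\ell(\lambda)}}$. Since $\sqrt{p^{2k}M}=p^k\sqrt M$, a generator of $G(p)$ exchanges $\xi_\lambda^+$ and $\xi_\lambda^-$ exactly when $\nu_p(\lambda_1\cdots\lambda_{\ell(\lambda)})$ is odd. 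It is not governed by whether the number of parts divisible by $p$ is odd, and the two criteria differ as soon as some part is divisible by $p^2$.

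For instance, take $p=3$ and $\lambda=(9,2)$. Then $\xi_\lambda^\pm$ is an associate pair of degree $2^4\cdot 35=560$, prime to $3$. Its values are $\pm3i$ on the split class of type $(9,2)$ and rational elsewhere, so both characters are $3$-rational, yet your rule predicts they are swapped. In tower terms, your invariant is $\ell(\alpha_1^{(0)})\equiv\sum_{\beta\geq1}\ell(\lambda_\beta^{(0)})$. The correct one is $\sum_{k\geq1}\ell(\alpha_k^{(0)})\equiv\sum_{\beta\geq1}\beta\,\ell(\lambda_\beta^{(0)})\pmod 2$. Proving this congruence, via the parity of lengths under passage to the $p$-bar cocore, is a step your plan needs and does not contain. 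The same argument with $k=0$ gives $\ell(\lambda)\equiv\sum_{\beta\geq0}\ell(\lambda_\beta^{(0)})$, which is what makes the association types match; you also assert that without proof.

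The local side, which carries the real content, is left as ``compute the values explicitly''. Your description does not isolate the feature that makes the two sides agree, namely a weight $\beta$ at level $\beta$. What has to be shown is the following.
\begin{enumerate}[(1)]
\item $L_\beta/X_\beta'\simeq G_p^{\beta}$ with $G_p=\mathbb F_p\rtimes\mathbb F_p^\times$. Hence the unique self-associate elementary spin character at level $\beta$ is $\Lambda_\beta\otimes\Phi^{\boxtimes\beta}$, a $\beta$-fold product of the degree-$(p-1)$ character of $G_p$. A Galois intertwiner conjugates its associator to $(-1)^\beta$ times itself: one sign per $G_p$ factor, not one per level.
\item The homogeneous component built on $r$ copies of this character is governed by projective Clifford theory with the factor set of a double cover of $\mathfrak{S}_r$. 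It is therefore labelled by a bar partition $\mu$, not an ordinary partition as your ``tuples of partitions'' suggests, and it has Galois exponent $\beta\,\ell(\mu)$.
\item The components built on the non-self-associate elementary characters contribute nothing. For $\beta\geq2$ these characters can be nonlinear of degree $(p-1)^k$ with $1\leq k<\beta$, so the Michler--Olsson construction must first be extended to them.
\end{enumerate}
Only then does the local exponent become $\sum_{\beta\geq1}\beta\,\ell(\lambda_\beta^{(0)})$, matching the corrected global one. Matching labels on the invariant you propose would not yield a $G(p)$-equivariant bijection.
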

An analogous result will also be obtained for the double cover $\widetilde
A_n$ of the alternating group.
\smallskip

We will use the following strategy. The main point is that the global and
local characters are parametrized by the same combinatorial data, namely
the \emph{$ p$-bar core towers} of bar partitions of $n$ (that
is, partitions with distinct parts). On the global side, irreducible spin
characters of  $\widetilde S_n$ are classically labelled by bar
partitions $\lambda$, or equivalently by their $ p$-bar core
towers.
Each $\lambda$ labels either one self-associate spin character or a pair
of associated spin characters. On the local side, the work of
Michler--Olsson~\cite{MichlerOlsson} implicitly yields the same
combinatorial description. We make it explicit and adapt their local
framework so that both association and the Galois action can be followed
on the two sides.

We also point out that the local construction in Michler--Olsson contains
a gap which goes beyond minor typographical errors. Their construction
does not cover all the representations needed for the local
parametrization.
We correct this issue here by extending their approach to the missing
representations. This correction may be of independent interest.

\medskip

The paper is organized as follows. In Section~\ref{sec:part1}, we recall
the parametrization and character values of the spin characters of the
double covers of the symmetric and alternating groups, describe their
Galois action, and relate $p$-rationality to the $p$-bar core tower.
In Section~\ref{sec:part2}, we consider the local case.
To study the irreducible characters of $p'$-degree on the local side, we
first reduce the Sylow normalizer by the derived subgroup of its Sylow
$p$-subgroup. We then describe the spin irreducible representations of
the resulting quotient one $p$-adic level at a time. For a fixed
level $\beta$, we first determine the elementary spin representations of
$N_\beta^+$ and then construct the representations attached to their
homogeneous factors. The non-self-associate case requires a modification
of the construction of Michler--Olsson in order to allow nonlinear
elementary representations. In the self-associate case we make the
projective extension and its factor set explicit. 
We then assemble the homogeneous factors by means of an intermediate
Young subgroup and Clifford theory, obtaining a parametrization of the
irreducible spin representations of $H_\beta^+$ by the admissible
$\beta$-tuples $\mathcal C_\beta\in\mathfrak C_\beta$ introduced in
Notation~\ref{not:admissible-beta-tuples}; see
Theorem~\ref{thm:param-level}.
To study the Galois action, we introduce an index-two intertwining
criterion (see Lemma~\ref{lem:galois-index-two}) and then prove a
propagation lemma describing its behavior under Humphreys products (see
Lemma~\ref{lem:BN-propagation}).
For the unique self-associate elementary factor, an explicit intertwining
construction shows that the Galois contribution of a homogeneous component
labelled by $\lambda$ is $ \beta\,\ell(\lambda)\mod2 $.

On the other hand, the homogeneous factors attached to non-self-associate
elementary representations contribute trivially. It follows that, at level
$\beta$, the Galois action depends only on the zero component of
$\mathcal C_\beta$, which we denote by $\lambda_\beta^{(0)}$, and is
controlled by
\[
\beta\,\ell\bigl(\lambda_\beta^{(0)}\bigr)\mod2.
\]
Then we identify the local parameters $\mathcal C_\beta$ with the
successive rows of the $p$-bar core tower and assemble the different
$p$-adic levels. The resulting local Galois exponent coincides with the
global one computed in Section~\ref{sec:part1}. This yields the
$G(p)$-equivariant bijection of Theorem~\ref{thm:main}, and the
corresponding result for the double cover of the alternating group follows
by Clifford theory.

Finally, in Section~6, we prove a blockwise version of
Theorem~\ref{thm:main}, replacing $p'$-characters by height-zero
characters in a fixed spin $p$-block and in its Brauer correspondent.

\section{Spin characters and the global Galois action }
\label{sec:part1}

Throughout this section, we refer to \cite{olsson} for further details
on spin characters, bar partitions, and their combinatorics.
Let $n$ be a positive integer. We consider the double covering group
$\tSym_n$ of the symmetric group $\sym_n$ defined by the presentation
$$
\tSym_n=\langle t_1,\ldots,t_{n-1},z\mid z^2=1,\,t_j^2=z,
(t_jt_{j+1})^3=z,(t_jt_k)^2=z\ (|j-k|\geq 2)
\rangle.
$$
The sign character of $\sym_n$ lifts through the natural projection
$\pi:\tSym_n\longrightarrow \sym_n$ to a linear character $
\varepsilon:\tSym_n\longrightarrow\{\pm1\} $, whose kernel is the
double cover $ \tAlt_n=\ker(\varepsilon) $ of $\alt_n$. Thus,
$ \tAlt_n\triangleleft\tSym_n$ and 
$[\tSym_n:\tAlt_n]=2$. 
In the following, if $K$ is a subgroup of $\sym_n$, then we set
\begin{equation}
\label{eq:Ktilde}
K^+=\pi^{-1}(K).
\end{equation}
we write, for $1\leq j<n$
\[
s_j=\pi(t_j)=(j\ j+1).
\]
Independently, for each partition $\mu$ of $n$, we choose an element
$s_\mu\in\sym_n$ of cycle type $\mu$ and a lift $t_\mu\in\tSym_n$ with
$\pi(t_\mu)=s_\mu$.
Such an element is said to have \emph{cycle type}
$\mu$. Note that the two elements $t_{\mu}$ and $zt_{\mu}$
have the same cycle type. 
Furthermore, the conjugacy class of \(t_{\mu}\) in \(\tSym_n\) is
called a \emph{non-split class} if \(t_{\mu}\) and
\(zt_{\mu}\) are conjugate in \(\tSym_n\), and a \emph{split class}
otherwise. Similarly, whenever \(t_{\mu}\in\tAlt_n\), its
conjugacy class in \(\tAlt_n\) is called \emph{non-split} if
\(t_{\mu}\) and \(zt_{\mu}\) are conjugate in \(\tAlt_n\), and
\emph{split} otherwise.

We denote by $\mathcal D_n$ the set of bar partitions of $n$, and by 
$\mathcal{O}_n$ the set of partitions of $n$ with
odd parts. Schur proved (see~\cite{schur}, \S7 and p.\,176)
that the $\tSym_n$-split classes are labeled by $\mathcal{O}_n\cup
\mathcal{D}^-_n$, whereas the $\tAlt_n$-split classes are labeled by
$\mathcal O_n \cup \mathcal D_n^+$. 

\subsection{Association and index-two Clifford theory}
\label{subsec:indice2}

Let $G$ be a finite group and let $H\triangleleft G$ be a subgroup of
index two. We denote by $\varepsilon:G\longrightarrow\{\pm1\}$
the non-trivial linear character with kernel $H$.
Multiplication $ \chi\longmapsto\varepsilon\chi$ defines an action on
$\Irr(G)$.
Assume the orbits of this action are parametrized by a set
$\Lambda$. There are two cases.
\begin{enumerate}[$\star$]
\item \textbf{(SA) Self-associate case.} If the orbit labelled by
$\lambda$ is a singleton, we denote its unique element by
$\chi_\lambda\in\Irr(G)$. Thus, $\varepsilon\chi_\lambda=\chi_\lambda$. 
Let \(\rho_\lambda\) be a representation affording \(\chi_\lambda\).
An \emph{associator} of \(\rho_\lambda\) is an invertible operator \(J\)
such that, for all $g\in G$,
\[
J\rho_\lambda(g)J^{-1}
=
\varepsilon(g)\rho_\lambda(g).
\]
It is unique up to a non-zero scalar, and we may normalize it so that $
J^2=I$.
By Clifford theory,
$
\Res_H^G\chi_\lambda
=
\chi_\lambda^++\chi_\lambda^-,
$
where the two constituents are $H$-irreducible and distinct, and afforded
by the \(+1\)- and \(-1\)-eigenspaces of \(J\), respectively.
\item \textbf{(NSA) Non-self-associate case.}
If the orbit labelled by $\lambda$ has cardinality $2$, we denote its two
elements by $ \chi_\lambda^+$ and $\chi_\lambda^- $, where $
\chi_\lambda^-=\varepsilon\chi_\lambda^+$.
Their restrictions to $H$ coincide and are irreducible. We denote this
common restriction by $
\chi_\lambda\in\Irr(H)$. Thus, $ \Res_H^G\chi_\lambda^+ =
\Res_H^G\chi_\lambda^- = \chi_\lambda$.
\end{enumerate}
In both cases, for $\lambda\in\Lambda$, we define the character difference
\begin{equation}
\label{eq:chardiff}
\Delta_\lambda=\chi_\lambda^+-\chi_\lambda^-.
\end{equation}
We refer to the distinction between the self-associate and
non-self-associate cases as the \emph{association type}.
\subsection{Spin characters and bar partitions}

Let $n$ be a positive integer. For a bar partition $\lambda$ of $n$,
we write $|\lambda|=n$ for its size, $\ell(\lambda)$ for its number of
parts, and we set $ d(\lambda)=|\lambda|-\ell(\lambda)$.
Recall that $\mathcal D_n$ labels the $\langle \varepsilon\rangle$-orbits in
$\Irr(\tSym_n)$. 
    More precisely, let $\mathcal D_n^+$ (resp. $\mathcal D_n^-$) be the
subset of $\lambda\in\mathcal D_n$ consisting of partitions $\lambda$ such
that $d(\lambda)\equiv 0\mod 2$ (resp.\! $d(\lambda)\equiv 1\mod 2$).
Then, the elements of $\mathcal D_n^+$ label the
$\langle\varepsilon\rangle$-orbits of size $1$, while those of $\mathcal
D_n^-$ label orbits of size $2$. In particular, we are in the setting of
\S\ref{subsec:indice2}, which we now specialize to the double cover
$\tSym_n$.
To describe the irreducible spin characters of $\tSym_n$ and
$\tAlt_n$, we specialize the notation of \S\ref{subsec:indice2} by
writing $\xi_\lambda$, $\xi_\lambda^+$ and $\xi_\lambda^-$ for the
corresponding spin characters.
Since $z$ acts as $-1$ in every spin representation, every spin
character vanishes on non-split classes. Schur's character formulas
\cite[\S4.2, (18) and (19)]{BrNaMZ}
further show that, for every $\lambda\in\mathcal D_n$, the character
$\xi_\lambda$ is integer-valued.
Moreover, the difference character corresponding to $\lambda$ vanishes
on all conjugacy classes except those of cycle type $\lambda$.
More precisely, 
if $\lambda\in\mathcal D_n^-$, then $\Delta_\lambda$ is a class
function on $\tSym_n$, and
\begin{equation}
\label{eq:diffSntilte}
\Delta_\lambda(t_\lambda)=
\sqrt 2\,i^{\frac{d(\lambda)+1}{2}}
\sqrt{\lambda_1\cdots\lambda_{\ell(\lambda)}},
\end{equation}
up to a sign.
If $\lambda\in\mathcal D_n^+$, then $\Delta_\lambda$ is a class
function on $\tAlt_n$, and
\begin{equation}
\label{eq:diffAntilte}
\Delta_\lambda(t_\lambda)=
i^{\frac{d(\lambda)}{2}}
\sqrt{\lambda_1\cdots\lambda_{\ell(\lambda)}},
\end{equation}
up to a sign.

\subsection{Galois action and $p$-rational characters}

From now on, let $p$ be an odd prime dividing $|\tSym_n|$. Write $2n!=p^f
m$ with $m$ prime to $p$. By Brauer's splitting-field theorem
\cite[Theorem 10.3]{isaacs}, $K=\mathbb Q_{2n!}$ is a splitting field of
$\tSym_n$. As  recalled in the introduction, a character is $p$-rational
if and only if its values lie in $\mathbb Q_m$, or equivalently, if it is
fixed by the subgroup $G(p)$ of $\mathcal H$ acting trivially on all roots
of unity of order prime to $p$. 
Although $G(p)$ is defined independently of $K$, its action on the
characters considered here factors through its restriction to $K$. The
image of the restriction map $G(p)\longrightarrow \Gal(K/\mathbb Q)$ is
$\Gal(K/\mathbb
Q_m)\simeq(\Z/p^{f}\Z)^\times$. Since $p$ is odd, this group is cyclic.
We choose $\tau_p\in G(p)$ whose restriction to $K$ generates $\Gal(K/\mathbb
Q_m)$. 
We then have 
$$
\chi\text{ is \(p\)-rational}
\iff
\chi^{\tau_p}=\chi.
$$
For a bar partition $\lambda=(\lambda_1,\ldots,\lambda_{\ell(\lambda)})$,
we define
\begin{equation}
\label{eq:prodpart}
\pi_{\lambda}=\prod_{u=1}^{\ell(\lambda)} \lambda_u.
\end{equation}

\begin{theorem}
Let $\lambda$ be a bar partition of $n$. If $\epsilon\in\{+,-\}$, then
$$(\xi_\lambda^\epsilon)^{\tau_p}=\xi_\lambda^{\epsilon(-1)^{\nu_p(\pi_{\lambda})}},$$
where $\nu_p$ denotes the $p$-valuation and signs are multiplied in the
usual way.
\end{theorem}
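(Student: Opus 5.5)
Since $\xi_\lambda^+$ and $\xi_\lambda^-$ share the rational sum $\xi_\lambda$ and differ only through $\Delta_\lambda$, the plan is to reduce the statement to computing $\tau_p$ on the single value $\Delta_\lambda(t_\lambda)$. Since $\xi_\lambda$ is integer-valued, $\xi_\lambda^{\tau_p}=\xi_\lambda$. In case (NSA), $\xi_\lambda^\pm$ are characters of $\tSym_n$ with $\xi_\lambda^++\xi_\lambda^-$ equal to the rational character $\xi_\lambda$ lifted appropriately. More precisely, their restrictions to $\tAlt_n$ agree and are rational, and $\xi_\lambda^\pm=\tfrac12(\text{sum})\pm\tfrac12\Delta_\lambda$. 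In case (SA), the same holds on $\tAlt_n$ with $\xi_\lambda^\pm=\tfrac12(\xi_\lambda|_{\tAlt_n}\pm\Delta_\lambda)$. Hence $\tau_p$ either fixes both constituents or swaps them, according as $\Delta_\lambda^{\tau_p}=\Delta_\lambda$ or $\Delta_\lambda^{\tau_p}=-\Delta_\lambda$. Because $\Delta_\lambda$ vanishes off the classes of cycle type $\lambda$, and those values are $\pm\Delta_\lambda(t_\lambda)$ (the class of $zt_\lambda$ carries the opposite value), it suffices to compute $\tau_p$ on the number $\Delta_\lambda(t_\lambda)$ given in \eqref{eq:diffSntilte} or \eqref{eq:diffAntilte}. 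If $\lambda$ splits into two classes in $\tAlt_n$ under $\tSym_n$-conjugation, there is also the value on the conjugate class, but it equals $\pm$ the same algebraic number, so the argument is unchanged.

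The value has the form $c\,\sqrt{\pi_\lambda}$, where $c\in\{\sqrt2\,i^{(d+1)/2},\,i^{d/2}\}$ up to sign. The factors $\sqrt2$ and $i$ lie in $\mathbb Q_8\subseteq\mathbb Q_m$, since $p$ is odd and $8\mid m$ whenever these values actually occur. They are therefore fixed by $\tau_p$, which acts trivially on $p'$-roots of unity. Write $\pi_\lambda=p^{a}r$ with $a=\nu_p(\pi_\lambda)$ and $p\nmid r$. Then $\sqrt{\pi_\lambda}=(\sqrt p)^{a}\sqrt r$. The number $\sqrt r$ lies in $\mathbb Q_{4r}$, which is $p'$-cyclotomic and contained in $K$ because $r$ divides $m$ up to powers of $2$. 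So $\sqrt r$ is fixed, and everything reduces to $\tau_p(\sqrt p)=-\sqrt p$.

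The main step is this last claim. Write $p^*=(-1)^{(p-1)/2}p$. Then $\sqrt{p^*}\in\mathbb Q_p$ is the unique quadratic subfield, and $\sqrt p=\sqrt{p^*}\cdot i^{\,\cdot}$ differs from it by a factor in $\mathbb Q_4$, which $\tau_p$ fixes. Since $\tau_p$ restricts to a generator of $\Gal(K/\mathbb Q_m)\simeq(\Z/p^f\Z)^\times$, its restriction to $\mathbb Q_p$ generates $\Gal(\mathbb Q_p/\mathbb Q)$. A generator of a cyclic group of even order $p-1$ acts nontrivially on the unique quadratic subfield, so $\tau_p(\sqrt{p^*})=-\sqrt{p^*}$, and hence $\tau_p(\sqrt p)=-\sqrt p$. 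Consequently $\Delta_\lambda(t_\lambda)^{\tau_p}=(-1)^{a}\Delta_\lambda(t_\lambda)$, and the reduction in the first paragraph gives $(\xi_\lambda^\epsilon)^{\tau_p}=\xi_\lambda^{\epsilon(-1)^{a}}$. The remaining care is bookkeeping: the "up to a sign" ambiguity is harmless because the sign is rational, and one should check that the roots of unity involved genuinely lie in the $p'$-part $\mathbb Q_m$.
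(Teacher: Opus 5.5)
Your argument is correct and follows the same route as the paper. You reduce to the action of $\tau_p$ on $\Delta_\lambda(t_\lambda)$, note that $i$, $\sqrt2$ and the $p'$-part of $\sqrt{\pi_\lambda}$ are fixed (directly because $\tau_p\in G(p)$ fixes all $p'$-roots of unity, so your remark about $8\mid m$ is not needed), and show that the $p$-part contributes $(-1)^{\nu_p(\pi_\lambda)}$ because $\tau_p$ restricts to a generator of $\Gal(\mathbb{Q}_p/\mathbb{Q})$. The only difference is in that last computation: the paper evaluates $\tau_p(\sqrt{M'})$ via the Jacobi symbol $\legendre{r}{M'}$ from \cite[Proposition~4.1]{BrNa}, using $r\equiv1\bmod M$ and $\legendre{r}{p}=-1$, whereas you split $\sqrt{\pi_\lambda}=(\sqrt p)^a\sqrt r$ and use that such a generator moves the unique quadratic subfield $\mathbb{Q}(\sqrt{p^*})$ of $\mathbb{Q}_p$, which is a self-contained version of the same computation.
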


\begin{proof}
For a positive integer $N$, we write $\omega_N$ for a primitive $N$-root
of unity. First, we remark that $i=\omega_4$ and $\sqrt
2=\omega_8+\omega_8^{-1}$. In particular, $\tau_p$ acts trivially on $i$
and $\sqrt 2$ since $p$ is odd.
Let $M$ be an odd integer prime to $p$ such that
$$\pi_{\lambda}=2^{\nu_2(\pi_{\lambda})}p^{\nu_p(\pi_{\lambda})}M,$$
and $M'=p^{\nu_p(\pi_{\lambda})}M$. By~(\ref{eq:diffSntilte}) and
(\ref{eq:diffAntilte}), we deduce that $\tau_p$ acts on
$\xi_\lambda^\epsilon$ as it acts on $\sqrt{M'}$.
Moreover, there is an integer $r$  prime to $M'$ such that
$\tau_p(\omega_{M'})=\omega_{M'}^r$. We set
$\omega_M=\omega_{M'}^{p^{\nu_p(\pi_\lambda)}}$. Since $M$ is prime to
$p$, $\tau_p(\omega_M)=\omega_M$. On the other hand, we have
$$\tau_p(\omega_M)=\tau_p\left(\omega_{M'}^{p^{\nu_p(\pi_\lambda)}}\right)=
\left(\omega_{M'}^{p^{\nu_p(\pi_\lambda)}}\right)^r=\omega_M^r.$$
Hence, $r\equiv 1\mod M$. Now,~\cite[Proposition 4.1]{BrNa} yields that
$\tau_p$ acts on $\sqrt{M'}$ by
$$\legendre{r}{M'}=\legendre{r}{p^{\nu_p(\pi_\lambda)}M}=
\legendre{r}{p^{\nu_p(\pi_\lambda)}}\legendre{r}{M}=
\legendre{r}{p}^{\nu_p(\pi_\lambda)}.$$
Since the restriction of $\tau_p$ generates
$\Gal(K/\mathbb Q_m)$, the residue class of $r$ modulo $p$
generates $\mathbb F_p^\times$. In particular, it is a non-square, and
$\legendre{r}{p}=-1$. 
Finally, we obtain
$$\legendre{r}{M'}=(-1)^{\nu_p(\pi_\lambda)},$$
and the result follows.
\end{proof}

\subsection{The \( p\)-bar core tower}

Each bar partition is uniquely determined by its $p$-bar core and its
$p$-bar quotient. Iterating this decomposition, we now recall how to
associate a $p$-bar core tower with every bar partition. 
Let \(\lambda\) be a bar partition. We define recursively two sequences
\((\mathcal Q_\beta(\lambda))_{\beta\geq 0}\) and \((\mathcal
C_\beta(\lambda))_{\beta\geq 0}\) of multipartitions as follows. The first
records the partitions which occur at each step of the $p$-bar-quotient
process, whereas the second records their cores and forms the $p$-bar core
tower.
We start with
$
\mathcal Q_0(\lambda)=(\lambda)$ 
and set
$\mathcal C_0(\lambda)
=
\bigl(\operatorname{core}_{\bar p}(\lambda)\bigr)$.
Let $\beta\geq 0$. Suppose that
\[
\mathcal Q_\beta(\lambda)
=
\bigl(
\alpha_\beta^{(0)},
\alpha_\beta^{(1)},
\ldots,
\alpha_\beta^{(e_\beta)}
\bigr),
\]
where \(\alpha_\beta^{(0)}\) is a bar partition and
\(\alpha_\beta^{(j)}\), for \(j>0\), are ordinary partitions.
We define the $\beta$-th level of the $p$-bar tower by
\[
\mathcal C_\beta(\lambda)
=
\bigl(
\lambda_\beta^{(0)},
\lambda_\beta^{(1)},
\ldots,
\lambda_\beta^{(e_\beta)}
\bigr),
\]
where $ \lambda_\beta^{(0)} $ is the $p$-bar core of $\alpha_\beta^{(0)}$
and for all $1\leq j\leq e_\beta$, $ \lambda_\beta^{(j)} $ is the usual
$p$-core of the partition $\alpha_\beta^{(j)}$.
To construct \(\mathcal Q_{\beta+1}(\lambda)\), we take the \(p\)-bar
quotient of the bar partition \(\alpha_\beta^{(0)}\), and the ordinary
\(p\)-quotient of each \(\alpha_\beta^{(j)}\), \(j>0\). The \(p\)-bar
quotient of \(\alpha_\beta^{(0)}\) consists of one bar partition and
\((p-1)/2\) ordinary partitions, while the \(p\)-quotient of each ordinary
partition \(\alpha_\beta^{(j)}\) consists of \(p\) ordinary partitions.
Listing all these components, with the unique bar component first, defines
$\mathcal Q_{\beta+1}(\lambda)$.
Hence, the sequence $(e_\beta)_{\beta\geq 0}$ satisfies $
e_{\beta+1}=\frac{1}{2}(p-1)+pe_\beta$. It follows by induction that 
\begin{equation}
\label{eq:defei}
e_\beta=\frac{1}{2}(p^\beta-1).
\end{equation}

\begin{theorem} 
\label{thm:globalprat}
Let $\lambda$ be a bar partition. With the notation 
above, we have
\[
\nu_p\!\left(\pi_\lambda\right)
\equiv
\sum_{\beta\geq1}\beta \,\ell(\lambda_\beta^{(0)})
\mod2.
\]
\end{theorem}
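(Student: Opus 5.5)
The plan is to work only with the bar components $\alpha_\beta^{(0)}$ of the sequence $\mathcal Q_\beta(\lambda)$, and to prove two parity identities: one relating $\nu_p(\pi_\lambda)$ to the lengths $\ell(\alpha_\beta^{(0)})$, and one relating those lengths to the lengths of the cores $\lambda_\beta^{(0)}$. I will use the standard description of the $p$-bar quotient in \cite{olsson}: the bar component of the $p$-bar quotient of a bar partition $\mu$ is the bar partition whose parts are $\mu_u/p$, for the parts $\mu_u$ of $\mu$ divisible by $p$. I will also use the description of removing a $p$-bar from $\mu$, which takes one of three forms:
\begin{enumerate}[(i)]
\item subtract $p$ from a part $\mu_u>p$ (when $\mu_u-p$ is not already a part);
\item delete a part equal to $p$;
\item delete two parts $\mu_u,\mu_v$ with $\mu_u+\mu_v=p$.
\end{enumerate}
The $p$-bar core is obtained by removing $p$-bars as long as possible, so it has no part divisible by $p$.

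\emph{Step 1: $\nu_p(\pi_\lambda)\equiv\sum_{\beta\ge1}\ell(\alpha_\beta^{(0)})$, in fact with equality.} Only parts divisible by $p$ contribute to $\nu_p(\pi_\lambda)$. By the description above, these parts are exactly $p\cdot a$ for $a$ running over the parts of $\alpha_1^{(0)}$. Hence
\[
\nu_p(\pi_\lambda)=\sum_{a}\bigl(1+\nu_p(a)\bigr)=\ell(\alpha_1^{(0)})+\nu_p\bigl(\pi_{\alpha_1^{(0)}}\bigr).
\]
The bar part of the construction applied to $\alpha_1^{(0)}$ produces $\alpha_2^{(0)},\alpha_3^{(0)},\dots$. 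Induction on $|\lambda|$ (or on $\beta$, since the sizes strictly decrease and eventually vanish) then gives $\nu_p(\pi_\lambda)=\sum_{\beta\ge1}\ell(\alpha_\beta^{(0)})$.

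\emph{Step 2: $\ell(\alpha_\beta^{(0)})\equiv\ell(\lambda_\beta^{(0)})+\ell(\alpha_{\beta+1}^{(0)}) \pmod 2$.} The parts of $\alpha_\beta^{(0)}$ split into those divisible by $p$, which number $\ell(\alpha_{\beta+1}^{(0)})$ by the quotient description, and those not divisible by $p$. Consider each type of $p$-bar removal. Moves (i) and (ii) do not change the number of parts not divisible by $p$, since subtracting $p$ preserves the residue mod $p$ and a part equal to $p$ is divisible by $p$. Move (iii) removes two parts, neither divisible by $p$, because they are positive and sum to $p$. So the number of non-divisible parts changes by an even number when passing to the core $\lambda_\beta^{(0)}$, all of whose parts are non-divisible. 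Therefore the non-divisible parts of $\alpha_\beta^{(0)}$ number $\ell(\lambda_\beta^{(0)})$ mod $2$.

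\emph{Step 3: assemble.} Iterating Step 2, which terminates because $\alpha_\beta^{(0)}=\emptyset$ for large $\beta$, gives $\ell(\alpha_\beta^{(0)})\equiv\sum_{\gamma\ge\beta}\ell(\lambda_\gamma^{(0)})$. Plugging this into Step 1 and swapping the sums yields
\[
\nu_p(\pi_\lambda)\equiv\sum_{\beta\ge1}\sum_{\gamma\ge\beta}\ell(\lambda_\gamma^{(0)})=\sum_{\gamma\ge1}\gamma\,\ell(\lambda_\gamma^{(0)}),
\]
which is the statement of Theorem~\ref{thm:globalprat}. The main point requiring care is Step 1. It relies on the convention, via Olsson's $\bar p$-abacus, that the bar component of the $p$-bar quotient is read off from the runner $0$ and consists exactly of the parts divisible by $p$, divided by $p$. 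I would verify this against \cite{olsson} before relying on it. Everything else is parity bookkeeping.
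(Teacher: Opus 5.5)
Your proof is correct and has the same skeleton as the paper's. Step~1 is the paper's identity $\nu_p(\pi_\lambda)=\sum_{k\geq1}\ell(\alpha_k^{(0)})$, which the paper phrases as $\ell(\alpha_\beta^{(0)})=|\{u\mid p^\beta\mid\lambda_u\}|$. Step~3 is the same telescoping of the recursion $\ell(\alpha_k^{(0)})\equiv\ell(\lambda_k^{(0)})+\ell(\alpha_{k+1}^{(0)}) \pmod 2$.

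The only difference is how that recursion is justified. The paper cites \cite[Lemma~3.11]{BrNaMZ}, namely that $\ell(\mu)$ is congruent to the length of its $p$-bar core plus the length of its $p$-bar cocore. It then observes that the non-zero components of the cocore's $p$-bar quotient contribute an even number of parts, while its zero component is $\alpha_{k+1}^{(0)}$.

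You instead split the parts of $\alpha_k^{(0)}$ according to divisibility by $p$. You then check directly that each of the three kinds of $p$-bar removal changes the number of parts prime to $p$ by $0$ or $2$, and that the core has no part divisible by $p$. Your version is self-contained and never needs the cocore or the external lemma, at the cost of a few extra lines. The paper's version is shorter given the reference.

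The convention you flag for verification is exactly the one the paper uses in its own first step. That convention is that the bar component of the $p$-bar quotient consists of the parts divisible by $p$, divided by $p$.
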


\begin{proof}
The parts of the zero component of the \(p\)-bar quotient consist of the
integers $a/p$, where $a$ is a part of $\lambda$ divisible by $p$, Hence, 
the parts of \(\alpha_\beta^{(0)}\) are obtained from the parts of \(\lambda\)
divisible by \(p^\beta\), after division by \(p^\beta\), and it follows that
\[
\ell(\alpha_\beta^{(0)})
=
|\{u\mid p^\beta\mid\lambda_u\}|,
\]
and therefore
\begin{equation}
\label{eq:preuvgeint1}
\nu_p\!(\pi_\lambda)=\nu_p\!\left(\prod_u\lambda_u\right)
=
\sum_u\nu_p(\lambda_u)
=
\sum_{k\geq1}\ell(\alpha_k^{(0)}).
\end{equation}
By \cite[Lemma 3.11]{BrNaMZ}, for every $k\geq0$,
\[
\ell(\alpha_k^{(0)})
\equiv
\ell(\lambda_k^{(0)})
+
\ell\bigl((\alpha_k^{(0)})^{[\bar p]}\bigr)
\mod2,
\]
where $(\alpha_k^{(0)})^{[\bar p]}$ denotes the $p$-bar cocore of
$\alpha_k^{(0)}$. The non-zero components of its $p$-bar quotient
contribute an even number of parts, and its zero component is
$\alpha_{k+1}^{(0)}$. Hence,
\[
\ell(\alpha_k^{(0)})
\equiv
\ell(\lambda_k^{(0)})
+
\ell(\alpha_{k+1}^{(0)})
\mod2.
\]
Iterating this relation gives, for every \(k\geq0\),
\begin{equation}
\label{eq:length-parity-tower}
\ell(\alpha_k^{(0)})
\equiv
\sum_{\beta\geq k}\ell(\lambda_\beta^{(0)})
\mod2.
\end{equation}
We then deduce from \eqref{eq:preuvgeint1} that
\[
\nu_p\!(\pi_\lambda)
\equiv
\sum_{k\geq1}\sum_{\beta\geq k}\ell(\lambda_\beta^{(0)})
=
\sum_{\beta \geq1} \beta\,\ell(\lambda_\beta^{(0)})
\mod2.
\]
This proves the claim.
\end{proof}

\section{Local spin \(p'\)-characters}
\label{sec:part2}

\subsection{The reduced local normalizer}

Write the $p$-adic expansion of $n$ as
\begin{equation}
\label{eq:expansionadicn}
n=\sum_{\beta\geq0}n_\beta p^\beta,
\end{equation}
with $0\leq n_\beta<p$, and let $P\in\Syl_p(\tSym_n)$ and $
N=N_{\tSym_n}(P)$. We are interested in the spin characters of $ N$ of
degree prime to $p$. Following the reduction used by Michler and Olsson
\cite{MichlerOlsson}, we may first consider the quotient of
$$\overline N=N/P'.$$ 
Indeed, let $\chi\in\Irr_{p'}(N)$ and let $\theta\in\Irr(P)$ lie below
$\chi$. By Clifford's theorem (see for example \cite[Theorem
(6.2)]{isaacs}), there is a positive integer $e$ such that
\[
\chi(1)
=
e\,[N:I_{N}(\theta)]\,\theta(1),
\]
where $I_{N}(\theta)$ denotes the inertia subgroup of $\theta$. Since
$\theta(1)$ is a power of $p$, we must have $\theta(1)=1$, and 
\[
P'\leq\ker\chi.
\]
Conversely, $P/P'$ is an abelian normal Sylow $p$-subgroup of $\overline
N$. Hence, by It\^o's theorem~\cite[Theorem (6.15)]{isaacs}, every
irreducible character of $\overline N$ has degree prime to $p$. Then
inflation induces a bijection
\[
\Irr^{\mathrm{spin}}(\overline N)
\longrightarrow
\Irr_{p'}^{\mathrm{spin}}(N).
\]
For $\beta\geq1$, let $X_\beta\in\Syl_p(\sym_{p^\beta})$. Let
$L_\beta=N_{\sym_{p^\beta}}(X_\beta)$ and $\mathcal H_\beta= L_\beta\wr
\sym_{n_\beta}\leq \sym_{p^\beta n_\beta}$. We have $(X_\beta')^{n_\beta}
\triangleleft \mathcal H_\beta$.
Since $p$ is odd, $X_\beta^+$ has a unique Sylow $p$-subgroup, which maps
isomorphically onto $X_\beta$. By abuse of notation, we denote it again by
$X_\beta$. In particular, $(X_\beta')^{n_\beta} \triangleleft \mathcal
H_\beta^+$. We define
\[
H_\beta=\mathcal H_\beta/(X_\beta')^{n_\beta}
\quad\text{and}\quad
H_\beta^+=\mathcal H_\beta^+/(X_\beta')^{n_\beta}.
\]
Now, if $N_\beta=L_\beta/X_\beta'$, then 
$$H_\beta\simeq N_\beta\wr \sym_{n_\beta}\quad \text{and}\quad
H_\beta^+\simeq (N_\beta\wr
\sym_{n_\beta})^+.$$
The description in \cite[\S3, pp.~87--92]{MichlerOlsson} yields
\begin{equation}
\label{eq:reduced-local-decomposition}
\overline N
\simeq
\tSym_{n_0}
\widehat\times
\widehat{\prod}_{\beta\geq1} H_\beta^+.
\end{equation}

\subsection{Spin characters of \(N_\beta^+\)}

Let $\beta\geq 1$. We now determine the irreducible spin characters of
\[
N_\beta^+=L_\beta^+/X_\beta'.
\]
The structure of $N_\beta=L_\beta/X_\beta'$ is particularly simple. Let
\[
G_p=\mathbb F_p\rtimes\mathbb F_p^\times,
\]
where $\mathbb F_p$ is viewed as an additive group and $\mathbb
F_p^\times$ acts by multiplication.
The description of $L_\beta$ in \cite{MichlerOlsson} gives $ N_\beta\simeq
G_p^\beta. $ Moreover, $N_\beta^+$ admits a linear spin character
$\Lambda_\beta$. Hence tensoring by $\Lambda_\beta$ identifies the
irreducible spin characters of $N_\beta^+$ with the irreducible characters
of $G_p^\beta$. 

Recall that $G_p$ has $p-1$ linear characters and a unique nonlinear
irreducible character $\Phi$, of degree $p-1$. For
$\boldsymbol\phi=(\phi_1,\ldots,\phi_\beta)\in \Irr(G_p)^\beta$, we write
$$\phi_1\boxtimes\cdots\boxtimes\phi_\beta \in \Irr(N_\beta),$$
where $\boxtimes$ denotes the external tensor product. Thus, the
irreducible spin characters of $N_\beta^+$ are
\[
\rho_{\boldsymbol\phi}
=
\Lambda_\beta\otimes
\Inf\bigl(\phi_1\boxtimes\cdots\boxtimes\phi_\beta\bigr),
\]
where $\Inf$ is the lift from $N_\beta$ to $N_{\beta}^+$.

Let $\nu$ denote the unique non-trivial linear character of order $2$ of
$G_p/\mathbb F_p\simeq \mathbb F_p^\times$. Under the parametrization
above, association is given by
$
\rho_{\boldsymbol\phi}
\longmapsto
\rho_{\boldsymbol\phi}\otimes\varepsilon
=
\rho_{(\nu\phi_1,\ldots,\nu\phi_\beta)}$.
Since $ \nu\phi=\phi $ if and only if $\phi=\Phi$,
it follows that, up to equivalence, $N_\beta^+$ has a unique
self-associate irreducible spin representation $R_\beta$, whose character
is 
\[
\chi_{R_\beta}
=
\Lambda_\beta\otimes\Inf\bigl(\Phi^{\boxtimes \beta}\bigr).
\]
All the remaining irreducible spin characters are non-self-associate.
Since $G_p$ has $p$ irreducible characters, $N_\beta^+$ has $p^\beta$
irreducible spin characters, and hence there are
\[
e_\beta=\frac{p^\beta-1}{2}
\]
non-self-associate association classes.

\begin{remark}
The preceding description differs from the linearity assertion used in
the corresponding local construction of Michler and Olsson
\cite{MichlerOlsson}. Indeed, if exactly $k$ of the components
$\phi_j$ of $\boldsymbol\phi$ are equal to $\Phi$, then
\[
\rho_{\boldsymbol\phi}(1)=(p-1)^k.
\]
Thus, for $\beta\geq2$, non-self-associate elementary spin characters of
degree greater than $1$ occur whenever $1\leq k<\beta$. 
\end{remark}

\subsection{Homogeneous factors}

Let $\beta\geq 1$, and $r\geq 1$. We call a representation of
\[
(N_\beta^+)^{\widehat\times r}
\]
\emph{homogeneous} if all its elementary factors belong to the same
association class of spin representations of $N_\beta^+$. The
corresponding subgroup
\[
(N_\beta\wr \sym_r)^+
\]
will be called \emph{the associated homogeneous factor}.
We now describe the irreducible spin representations lying above such a
homogeneous representation. We follow the construction of Michler and
Olsson as closely as possible. However, in the non-self-associate case, a
modification is required when the elementary character is nonlinear.
Furthermore, in the self-associate case, we make the projective structure
explicit by determining the factor set associated with a chosen
transversal.

We first recall the realization of the base subgroup introduced by
Michler and Olsson. If $t_1,\ldots,t_{r-1}$ denote the chosen lifts of
the simple transpositions, then, for $x\in N_\beta^+$, we define
\[
x^{(1)}=x,
\qquad
x^{(j+1)}=t_jx^{(j)}t_j^{-1}
\quad (\text{for }1\leq j<r).
\]
We shall also write
\[
S_{\beta,r}^+
=
\langle t_1,\ldots,t_{r-1},z\rangle.
\]
By \cite[Lemma~3.5]{MichlerOlsson}, the covering group induced on the
permutations of the $r$ blocks depends on $p^\beta$ modulo $4$. More
precisely,
\begin{equation}
\label{eq:splus1}
t_j^2=
\begin{cases}
z,&p^\beta\equiv1\mod4,\\
1,&p^\beta\equiv-1\mod4,
\end{cases}
\quad\text{and}\quad
(t_jt_{j+1})^3
=
\begin{cases}
z,&p^\beta\equiv1\mod4,\\
1,&p^\beta\equiv-1\mod4,
\end{cases}
\end{equation}
and
\begin{equation}
\label{eq:splus2}
(t_jt_k)^2=z
\qquad (|j-k|>1)
\end{equation}
in both cases. Following \cite[Lemma~4.2(8)]{MichlerOlsson},
we set
\begin{equation}
\label{eq:defsignechoix}
\gamma_\beta=
\begin{cases}
1,&p^\beta\equiv1\mod4,\\
\mathrm i,&p^\beta\equiv-1\mod4.
\end{cases}
\end{equation}
Thus, the matrix corresponding to $t_j$ involves $\gamma_\beta D_j$.
For $1\leq j\leq r$, the elements $x^{(j)}$, with $x\in N_\beta^+$, form
the $j$-th copy of $N_\beta^+$ in the base subgroup. These copies generate
\[
(N_\beta^+)^{\widehat\times r},
\]
and satisfy the following relations.
\begin{enumerate}[(i)]
\item For each fixed $j$, the elements $x^{(j)}$ satisfy the
defining relations of $N_\beta^+$.

\item If $j\neq k$, then
\[
x^{(j)}y^{(k)}
=
z^{\partial(x)\partial(y)}
y^{(k)}x^{(j)},
\]
where $\partial(x)\in\{0,1\}$ is determined by
$
\varepsilon(x)=(-1)^{\partial(x)}.
$

\item The central involutions of the different copies are
identified with the same element $z$.
\end{enumerate}
In particular, every element of the base can be written in the form
\[
x_1^{(1)}x_2^{(2)}\cdots x_r^{(r)},
\qquad x_1,\ldots,x_r\in N_\beta^+.
\]
Moreover, \cite[p.~92]{MichlerOlsson} gives
\[
t_jx^{(k)}t_j^{-1}
=
\begin{cases}
x^{(j+1)},&k=j,\\
x^{(j)},&k=j+1,\\
z^{\partial(x)}x^{(k)},&k\neq j,j+1.
\end{cases}
\]

\subsubsection{Non-self-associate factors.}

Let $\rho$ be an irreducible non-self-associate spin representation of
$N_\beta^+$, acting on a space $V$. We consider the homogeneous base
representation determined by $r$ copies of $\rho$. Then the construction
of Michler and Olsson extends to $\rho$, without any linearity assumption.

Let $W_r$ be the space on which the matrices $ F_1,\ldots,F_r $ and $
D_1,\ldots,D_{r-1} $ (of Michler--Olsson's construction
\cite[Lemma~4.2]{MichlerOlsson}) act. We choose the ordering so that the
$j$-th copy corresponds to $F_{r-j+1}$.

\begin{notation}
Let $V$ be a vector space and let $r\geq 1$. For every $M\in\End(V)$ and
every $1\leq j\leq r$, we write
\[
M^{(j)}
=
I^{\otimes(j-1)}\otimes M\otimes I^{\otimes(r-j)}
\in \End(V^{\otimes r}).
\]
\end{notation}

For $x\in N_\beta^+$ and $1\leq j\leq r$, we define
\[
E_\rho(x^{(j)})
=
\rho(x)^{(j)}\otimes F_{r-j+1}^{\partial(x)}
\in
\GL(V^{\otimes r}\otimes W_r).
\]

\begin{proposition}
With the notation above, these formulas define a representation
\[
E_\rho:
(N_\beta^+)^{\widehat\times r}
\longrightarrow
\GL(V^{\otimes r}\otimes W_r).
\]
Moreover, $E_\rho$ extends to a representation $\widetilde E_\rho$ of
the associated homogeneous factor $(N_\beta\wr \sym_r)^+$ by setting
\begin{equation}
\label{eq:defopnsa}
\widetilde E_\rho(t_j)=P_j\otimes \gamma_\beta D_j,
\end{equation}
where $P_j$ interchanges the $j$-th and $(j+1)$-st factors of $V^{\otimes
r}$, and $\gamma_\beta$ is defined in \eqref{eq:defsignechoix}.
\end{proposition}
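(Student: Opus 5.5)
The plan is to check the defining relations directly, in two stages: first for the base group $(N_\beta^+)^{\widehat\times r}$, then for the generators $t_j$. Throughout, the $V^{\otimes r}$-part and the $W_r$-part are treated separately. The point to keep in view is that $\rho$ enters only through the first tensor factor, so its degree plays no role. This is why the Michler--Olsson argument should go through without any linearity assumption.

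For the first stage, I would use that $(N_\beta^+)^{\widehat\times r}$ is the twisted central product generated by the $r$ copies of $N_\beta^+$. It is presented by relations (i)--(iii) above: take the free product of the copies, impose (i)--(iii), and compare orders, which gives $|N_\beta^+|^r/2^{r-1}$. From the Michler--Olsson construction \cite[Lemma~4.2]{MichlerOlsson} I will use the following facts about the matrices $F_k$:
\[
F_k^2=I,\qquad F_kF_l=-F_lF_k\quad(k\neq l).
\]
Then the relations are checked one at a time.
\begin{enumerate}[(i)]
\item Within a fixed copy $j$, we have $E_\rho(x^{(j)})E_\rho(y^{(j)})=\rho(xy)^{(j)}\otimes F_{r-j+1}^{\partial(x)+\partial(y)}$. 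Since $\partial$ is a homomorphism to $\Z/2\Z$ and $F^2=I$, this equals $E_\rho((xy)^{(j)})$.
\item For $j\neq k$, the operators $\rho(x)^{(j)}$ and $\rho(y)^{(k)}$ commute, since they act on different tensor factors. Moving $F_{r-j+1}^{\partial(x)}$ past $F_{r-k+1}^{\partial(y)}$ produces the sign $(-1)^{\partial(x)\partial(y)}$. This is exactly the image of $z^{\partial(x)\partial(y)}$.
\item Since $\partial(z)=0$ and $\rho(z)=-I$, every copy of $z$ is sent to $-I$. So the central involutions of all copies are identified, as required.
\end{enumerate}

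For the extension, I would present $(N_\beta\wr\sym_r)^+$ as generated by the base together with $t_1,\ldots,t_{r-1}$. The relations are those of the base, the conjugation formulas from \cite[p.~92]{MichlerOlsson} recalled above, and the Coxeter-type relations \eqref{eq:splus1}--\eqref{eq:splus2}. The $P_j$ are genuine permutation operators, so they satisfy the Coxeter relations with trivial signs. They also satisfy
\[
P_j\rho(x)^{(j)}P_j^{-1}=\rho(x)^{(j+1)},\qquad P_j\rho(x)^{(k)}P_j^{-1}=\rho(x)^{(k)}\quad(k\neq j,j+1).
\]
Hence the $W_r$-factor must account for every sign in these relations, and the verification reduces to the following relations for the $D_j$ from \cite[Lemma~4.2]{MichlerOlsson}:
\begin{enumerate}[(a)]
\item $D_j$ interchanges $F_{r-j+1}$ and $F_{r-j}$ under conjugation;
\item $D_jF_kD_j^{-1}=-F_k$ for the other indices $k$.
\end{enumerate}
Relation (a) matches the swap of copies $j$ and $j+1$, which correspond to $F_{r-j+1}$ and $F_{r-j}$ under the chosen ordering. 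Relation (b) matches the factor $z^{\partial(x)}$, because $\rho(x)^{(k)}$ commutes with $P_j$ and the sign $(-1)^{\partial(x)}$ comes entirely from the $W_r$-part. The relations \eqref{eq:splus1} and \eqref{eq:splus2} then follow from the identities for $D_j^2$, $(D_jD_{j+1})^3$ and $(D_jD_k)^2$ in Michler--Olsson, after rescaling by $\gamma_\beta$. Since
\[
\gamma_\beta^2=\begin{cases}1,&p^\beta\equiv1\mod4,\\-1,&p^\beta\equiv-1\mod4,\end{cases}\qquad
\gamma_\beta^6=\gamma_\beta^2,\qquad \gamma_\beta^4=1,
\]
the sign of $t_j^2$ and of $(t_jt_{j+1})^3$ switches between $z\mapsto -I$ and $1\mapsto I$ according to $p^\beta\bmod 4$, while the sign of $(t_jt_k)^2$ is unaffected.

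The step I expect to be the main obstacle is the bookkeeping of signs in the $W_r$-factor. One must confirm that the ordering $j\leftrightarrow F_{r-j+1}$ is compatible with the Michler--Olsson relations for $D_j$, and that the scalar $\gamma_\beta$ produces exactly the case distinction of \cite[Lemma~3.5]{MichlerOlsson}. A secondary point is to justify that the relations listed above form a full presentation of $(N_\beta\wr\sym_r)^+$. For this I would use the semidirect-type decomposition (base)$\cdot S_{\beta,r}^+$ with intersection $\langle z\rangle$, and compare orders.
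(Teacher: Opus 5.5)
Your proposal is correct and follows the paper's proof essentially step by step. You verify relations (i)--(iii) on the base using $F_k^2=I$ and the anticommutation of the $F_k$. You then check the conjugation action of $P_j\otimes\gamma_\beta D_j$ via \cite[Lemma~4.2]{MichlerOlsson}, and the Coxeter-type relations via $D_j^2=-I$, $(D_jD_{j+1})^3=-I$ and $(D_jD_k)^2=-I$ together with the powers of $\gamma_\beta$. Your only addition is the order-comparison argument that these relations actually present the base group and $(N_\beta\wr\sym_r)^+$; the paper takes this for granted when it says it suffices to check them.
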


\begin{proof}
To prove that the formula defines a representation, it is enough to check that
the prescribed operators $E(x^{(j)})$ satisfy relations~(i), (ii) and
(iii) above. Although the verification is routine, we include a few
details for the convenience of the reader. 
We first consider relation~(i). For $1\leq j\leq r$, 
\[
x\longmapsto
\rho(x)^{(j)}\otimes F_{r-j+1}^{\partial(x)}
\]
is a representation of $N_\beta^+$. Indeed, we have
$
\partial(xy)\equiv\partial(x)+\partial(y)\mod 2
$
and $F_{r-j+1}^2=I$, thus
\[
\begin{aligned}
E_\rho(x^{(j)})E_\rho(y^{(j)})
&=
\rho(xy)^{(j)}\otimes
F_{r-j+1}^{\partial(x)+\partial(y)}\\
&=
\rho(xy)^{(j)}\otimes
F_{r-j+1}^{\partial(xy)}\\
&=
E_\rho((xy)^{(j)}).
\end{aligned}
\]
We next verify relation~(ii). If $j\neq k$, then $\rho(x)^{(j)}$ and
$\rho(y)^{(k)}$ act on distinct tensor factors and therefore commute.
On the other hand, since
\[
F_uF_v=-F_vF_u
\quad \text{for }u\neq v,
\]
we have
\[
F_{r-j+1}^{\partial(x)}F_{r-k+1}^{\partial(y)}
=
(-1)^{\partial(x)\partial(y)}
F_{r-k+1}^{\partial(y)}F_{r-j+1}^{\partial(x)}.
\]
Hence,
\[
\begin{aligned}
E_\rho(x^{(j)})E_\rho(y^{(k)})
&=
(-1)^{\partial(x)\partial(y)}
E_\rho(y^{(k)})E_\rho(x^{(j)})\\
&=
E_\rho\!\left(
z^{\partial(x)\partial(y)}y^{(k)}x^{(j)}
\right),
\end{aligned}
\]
since $\rho(z)=-I$.
Finally, relation~(iii) is satisfied since $\partial(z)=0$ and
$\rho(z)=-I$, so that
\[
E_\rho(z^{(j)})=\rho(z)^{(j)}=-I
\]
for every $j$.
We now extend $E_\rho$ to the associated homogeneous factor. Since
$(N_\beta\wr \sym_r)^+$ is generated by the base subgroup and by
$t_1,\ldots,t_{r-1}$, it remains to check that the operators defined
in~\eqref{eq:defopnsa} satisfy the defining relations involving the
generators $t_j$: first their conjugation action on the base, and then the
relations among the $t_j$ themselves. 
By~\cite[Lemma 4.2(5)]{MichlerOlsson}, we have
\begin{align*}
  D_j F_{r-j+1} D_j^{-1} &= F_{r-j},\quad 
  D_j F_{r-j} D_j^{-1}   = F_{r-j+1},\\
  D_j F_k D_j^{-1} &= -F_k \quad \text{for } k \neq r-j, r-j+1.
\end{align*}
Hence,
\[
\widetilde{E}_\rho(t_j)E_\rho(x^{(j)})\widetilde{E}_\rho(t_j)^{-1}
=
E_\rho(x^{(j+1)})
\quad\text{and}\quad
\widetilde{E}_\rho(t_j)E_\rho(x^{(j+1)})\widetilde{E}_\rho(t_j)^{-1}
=
E_\rho(x^{(j)}).
\]
If $k\neq j,j+1$, then
\[
\begin{aligned}
\widetilde{E}_\rho(t_j)E_\rho(x^{(k)})\widetilde{E}_\rho(t_j)^{-1}
&=
\rho^{(k)}(x)\otimes
D_jF_{r-k+1}^{\partial(x)}D_j^{-1}\\
&=
(-1)^{\partial(x)}E_\rho(x^{(k)})\\
&=
E_\rho\bigl(z^{\partial(x)}x^{(k)}\bigr).
\end{aligned}
\]
Furthermore, we have
\begin{equation}
\label{eq:relationfliptresse}
P_j^2=I,\quad
P_jP_{j+1}P_j=P_{j+1}P_jP_{j+1}
\quad\text{and}\quad
P_jP_k=P_kP_j\quad(|j-k|>1).
\end{equation}
On the other hand, \cite[Lemma~4.2(1), (3), (6)]{MichlerOlsson} gives
\[
D_j^2=-I,\quad
(D_jD_{j+1})^3=-I
\quad\text{and}\quad
(D_jD_k)^2=-I\quad(|j-k|>1).
\]
It follows that
\[
\widetilde E_\rho(t_j)^2
=
P_j^2\otimes \gamma_\beta^2D_j^2
=
\begin{cases}
-I=E_\rho(z),&p^\beta\equiv1\mod4,\\
 I,&p^\beta\equiv-1\mod4.
\end{cases}
\]
Moreover, using that $ (P_jP_{j+1})^3=I$ and $(D_jD_{j+1})^3=-I $, we
obtain
\[
\begin{aligned}
\bigl(
\widetilde E_\rho(t_j)
\widetilde E_\rho(t_{j+1})
\bigr)^3
&=
(P_jP_{j+1})^3
\otimes
\gamma_\beta^6(D_jD_{j+1})^3\\
&=
\begin{cases}
-I=E_\rho(z),&p^\beta\equiv1\mod4,\\
 I,&p^\beta\equiv-1\mod4.
\end{cases}
\end{aligned}
\]
Finally, if $|j-k|>1$, then
$
(P_jP_k)^2=I$ and
$(D_jD_k)^2=-I$,
and $\gamma_\beta^4=1$. Hence,
\[
\begin{aligned}
\bigl(
\widetilde E_\rho(t_j)
\widetilde E_\rho(t_k)
\bigr)^2
&=
(P_jP_k)^2
\otimes
\gamma_\beta^4(D_jD_k)^2\\
&=
-I
=
E_\rho(z).
\end{aligned}
\]
Thus relations~\eqref{eq:splus1} and~\eqref{eq:splus2} are satisfied,
and the result follows.
\end{proof}

For every partition $\lambda\vdash r$, Gallagher's
theorem~\cite[Corollary~(6.17)]{isaacs} then gives an irreducible
representation
\begin{equation}
\label{eq:basensahom}
E_{\rho,\lambda}
=
\widetilde{E}_\rho\otimes\Inf S^\lambda,
\end{equation}
where $S^\lambda$ is an irreducible representation of $\sym_r$
corresponding to $\lambda$.

\begin{proposition}
\label{prop:asso1}
The representation $E_{\rho,\lambda}$ is self-associate if and only if $r$
is even. Moreover, if $r$ is even, an associator is given by
\[
J_{\rho,r,\lambda}
=
I_{V^{\otimes r}}\otimes c_r\Omega_r\otimes I_{S^\lambda},
\qquad
\Omega_r=F_rF_{r-1}\cdots F_1,
\]
where $c_r$ is chosen so that $(c_r\Omega_r)^2=I$.
\end{proposition}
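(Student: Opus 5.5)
We need to determine when $E_{\rho,\lambda}\otimes\varepsilon\cong E_{\rho,\lambda}$, and exhibit $J$ when $r$ is even. The plan is to handle the two directions separately: an explicit associator for even $r$, and a degree/dimension or restriction argument for odd $r$.

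\textbf{Even $r$: the associator.} We check that $J=I\otimes c_r\Omega_r\otimes I$ satisfies $J\,E_{\rho,\lambda}(g)\,J^{-1}=\varepsilon(g)E_{\rho,\lambda}(g)$ on generators.

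For a base element $x^{(j)}$, note that $\varepsilon(x^{(j)})=(-1)^{\partial(x)}$. The matrix $F_{r-j+1}$ anticommutes with the $r-1$ other factors of $\Omega_r$ and commutes with itself. Hence
\[
\Omega_r F_{r-j+1}\Omega_r^{-1}=(-1)^{r-1}F_{r-j+1}=-F_{r-j+1}
\]
for $r$ even. Therefore conjugating $E_\rho(x^{(j)})=\rho(x)^{(j)}\otimes F_{r-j+1}^{\partial(x)}$ gives the factor $(-1)^{\partial(x)}$, as required.

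For $t_j$, the element $\pi(t_j)$ permutes two blocks of size $p^\beta$, so it is a product of $p^\beta$ transpositions. Since $p$ is odd, it is an odd permutation and $\varepsilon(t_j)=-1$. We therefore need $\Omega_r D_j\Omega_r^{-1}=-D_j$.

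Use Lemma~4.2(5) of Michler--Olsson: $D_j$ swaps $F_{r-j}$ and $F_{r-j+1}$ and negates the other $F_k$. Then
\[
D_j\Omega_rD_j^{-1}=(-1)^{r-2}F_r\cdots F_{r-j}F_{r-j+1}\cdots F_1,
\]
with the two adjacent factors interchanged. Swapping them back costs one more sign, so $D_j\Omega_rD_j^{-1}=(-1)^{r-1}\Omega_r=-\Omega_r$. The tensor factor $P_j$ and the factor $\operatorname{Inf}S^\lambda$ are untouched by $J$.

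Finally, $\Omega_r^2=\pm I$ because the $F_k$ anticommute and square to $I$. So a scalar $c_r\in\{1,\mathrm i\}$ normalizes $(c_r\Omega_r)^2=I$. Invertibility is clear.

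\textbf{Odd $r$: not self-associate.} Here I would compute a character value where $\varepsilon=-1$ and the character is nonzero. Take $g=x^{(1)}\cdots x^{(r)}$ with every $\partial(x)=1$; this is possible since $\rho$ is spin of $N_\beta^+$ and $\varepsilon$ is nontrivial on $N_\beta^+$. Then $\varepsilon(g)=(-1)^r=-1$, and
\[
E_{\rho,\lambda}(g)=\rho(x)^{\otimes r}\otimes F_r\cdots F_1\otimes I_{S^\lambda}
\]
up to order and sign.

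For odd $r$, $\Omega_r$ commutes with all $F_k$, by the same sign count. Its trace on $W_r$ is nonzero in Michler--Olsson's Clifford-type model: for odd $r$, $W_r$ has dimension $2^{(r-1)/2}$, and the product of all generators acts as a nonzero scalar. So the character value is $\pm\operatorname{tr}(\rho(x))^r\cdot c\cdot f^\lambda$.

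We need $\operatorname{tr}\rho(x)\neq0$ for some $x$ with $\partial(x)=1$. This holds because $\rho$ is non-self-associate: otherwise $\chi_\rho$ vanishes off $\ker\varepsilon$, and then $\chi_\rho\varepsilon=\chi_\rho$.

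\textbf{Main obstacle.} The delicate step is the odd case. It relies on the precise structure of the Michler--Olsson matrices, namely that $\Omega_r$ is a nonzero scalar for odd $r$. I would check this from their explicit construction, where $F_r$ is defined so that the product of the $F_k$ is scalar in the odd case. Otherwise I would replace the trace argument by a restriction argument: show that $\operatorname{Res}E_{\rho,\lambda}$ to $\ker\varepsilon$ stays irreducible by counting via Clifford theory.
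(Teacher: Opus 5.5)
Your proof is correct. For even $r$ it is the paper's argument. The sign $(-1)^{r-1}=-1$ from conjugating each $F_k$ by $\Omega_r$ handles the base generators. The relation $\Omega_rD_j\Omega_r^{-1}=-D_j$, together with $\varepsilon(t_j)=-1$, handles the $t_j$. The paper gets the relation for $D_j$ from $D_j$ being a multiple of $F_{r-j}+F_{r-j+1}$; you get it from the conjugation rules of \cite[Lemma~4.2(5)]{MichlerOlsson}, and either works.

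The real difference is the ``only if'' direction, which the paper's proof never treats. It stops after the even case with ``the result follows''. The tacit justification is Humphreys' rule \cite[Proposition~1.2]{MichlerOlsson}: $E_\rho$ is a twisted product of $r$ copies of the non-self-associate $\rho$, and an associator for $E_{\rho,\lambda}$ would restrict to one for $E_\rho$. Your computation of the character at $g=x^{(1)}\cdots x^{(r)}$, with $\partial(x)=1$ and $\chi_\rho(x)\neq0$, proves this directly.

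The point you flag as the main obstacle is settled without inspecting the explicit matrices:
\begin{enumerate}[(a)]
\item The elements $x^{(j)}$ with $\partial(x)=0$ act by $\rho(x)^{(j)}\otimes I$.
\item Since $\rho$ is non-self-associate, its restriction to $\ker(\varepsilon|_{N_\beta^+})$ is irreducible, so these operators act irreducibly on $V^{\otimes r}$. Hence the commutant of $E_\rho$ is $I\otimes\{F_1,\dots,F_r\}'$.
\item Irreducibility of $E_\rho$, already needed for Gallagher's theorem in \eqref{eq:basensahom}, therefore forces the $F_k$ to act irreducibly on $W_r$.
\item For odd $r$, $\Omega_r$ commutes with every $F_k$, so it is a scalar by Schur's lemma. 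It is nonzero because $\Omega_r^2=\pm I$.
\end{enumerate}
Thus $\chi_{E_{\rho,\lambda}}(g)=\chi_\rho(x)^r\operatorname{tr}(\Omega_r)\dim S^\lambda\neq0$ although $\varepsilon(g)=-1$, which is exactly what your argument needs.
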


\begin{proof}
Suppose that $r$ is even. 
Since $\Omega_r$ anticommutes with every $F_k$, we have
\[
\Omega_rD_j\Omega_r^{-1}
=
-D_j.
\]
Choose a scalar $c_r$ so that
\[
(c_r\Omega_r)^2=I,
\]
and set
\[
J_{\rho,r}
=
I_{V^{\otimes r}}\otimes c_r\Omega_r.
\]
Then
\[
J_{\rho,r}E_\rho(x^{(j)})J_{\rho,r}^{-1}
=
\varepsilon(x)E_\rho(x^{(j)})
\quad\text{and}\quad
J_{\rho,r}\widetilde{E}_\rho(t_k)J_{\rho,r}^{-1}
=
-\widetilde{E}_\rho(t_k).
\]
Thus, $J_{\rho,r}$ is an associator for $\widetilde{E}_\rho$, and the
result follows.
\end{proof}

\subsubsection{The self-associate factor.}

We now turn to the unique self-associate spin representation $R_\beta$ of
$N_\beta^+$.

\begin{lemma}
With the notation above, $R_\beta$ admits an associator $J_\beta$ such that
\begin{equation}
\label{eq:assoD}
J_\beta^2=I
\quad\text{and}\quad
J_\beta R_\beta(x)J_\beta^{-1}
=
\varepsilon(x)R_\beta(x)
\quad \text{for }x\in N_\beta^+.
\end{equation}
\end{lemma}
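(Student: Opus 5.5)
The plan is to derive the existence of $J_\beta$ from the self-associativity of $R_\beta$ by Schur's lemma, and then to fix the normalization; no explicit matrices are needed. By the discussion preceding the statement, the character of $R_\beta$ is $\Lambda_\beta\otimes\Inf(\Phi^{\boxtimes\beta})$ and is fixed under tensoring by $\varepsilon$, because $\nu\Phi=\Phi$. Hence $R_\beta$ and $\varepsilon\otimes R_\beta$ are equivalent representations of $N_\beta^+$. So there is an invertible operator $J$ with
\[
J R_\beta(x) J^{-1}=\varepsilon(x)R_\beta(x)\quad\text{for all }x\in N_\beta^+,
\]
and this is the second relation in \eqref{eq:assoD}.

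Next I normalize. Since $\varepsilon^2=1$, conjugating twice gives
\[
J^2 R_\beta(x) J^{-2}=R_\beta(x)\quad\text{for all }x.
\]
So $J^2$ commutes with the irreducible representation $R_\beta$. By Schur's lemma over $\C$, $J^2=cI$ for some nonzero scalar $c$. Replacing $J$ by $J_\beta=c^{-1/2}J$, for either square root, gives $J_\beta^2=I$ and leaves the intertwining relation unchanged. This is exactly the normalization mentioned in the (SA) case of \S\ref{subsec:indice2}, and it proves the lemma.

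Later steps will need $J_\beta$ to interact well with the factor set of the projective extension, so I would also write it down explicitly, using the tensor structure. Realize $\Phi$ on $U=\Ind_{\mathbb F_p}^{G_p}\psi$ for a nontrivial character $\psi$ of $\mathbb F_p$. Since $\nu\Phi\simeq\Phi$, there is an involution $j$ on $U$ with $j\Phi(g)j^{-1}=\nu(g)\Phi(g)$. Concretely, $j$ acts on the basis indexed by $\mathbb F_p^\times/\ker\nu$-cosets by the sign $\nu$, that is, $j$ is the diagonal operator $\nu(a)$ on the summand indexed by $a\in\mathbb F_p^\times$. A candidate is then $J_\beta=j^{\otimes\beta}$ on $\Lambda_\beta\otimes U^{\otimes\beta}$.

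This candidate requires $\varepsilon$, pulled back to $N_\beta\simeq G_p^\beta$, to equal $\nu^{\boxtimes\beta}$. That identity is the main point to check. It needs the Michler--Olsson description of $L_\beta$: each factor of $\mathbb F_p^\times$ acts as a product of $p^{\beta-1}$ disjoint $(p-1)$-cycles-type permutations, and its sign is governed by $\nu$. Even without this check, the abstract Schur argument above already yields the lemma.
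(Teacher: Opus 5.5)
Your argument is correct, but it takes a different route from the paper.

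\emph{Your route.} You obtain $J_\beta$ abstractly. Since $R_\beta\simeq\varepsilon R_\beta$, some invertible $J$ intertwines them. Then $J^2$ commutes with the irreducible $R_\beta$, so $J^2=cI$, and $c^{-1/2}J$ works. This is the general (SA) remark of \S\ref{subsec:indice2} applied to $R_\beta$.

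\emph{The paper's route.} The paper exhibits the associator. On the induced model $\varphi(b)e_u=\zeta_p^ue_u$, $\varphi(c)e_u=e_{a^{-1}u}$ of $\Phi$, it sets $Je_u=\eta(u)e_u$. It checks $J^2=I$, $J\varphi(b)=\varphi(b)J$ and $J\varphi(c)J^{-1}=\eta(a)\varphi(c)$. It then takes $J_\beta=J^{\otimes\beta}$, using $\varepsilon|_{N_\beta}=\eta^{\boxtimes\beta}$ and the fact that $\Lambda_\beta(x)$ is a scalar.

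\emph{Comparison.} Your proof is shorter and model-free, but it does not bypass the identification $\varepsilon|_{N_\beta}=\nu^{\boxtimes\beta}$. The self-associativity you quote from the preceding discussion (association acting by $\phi_k\mapsto\nu\phi_k$) rests on that same fact. Your sign count, $p^{\beta-1}$ disjoint $(p-1)$-cycles for a generator of the $k$-th $\mathbb F_p^\times$ and hence sign $-1$, is correct and justifies both versions.

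The explicit form pays off in Lemma~\ref{lem:galois-E}. A diagonal $\pm1$ associator is fixed by $\tau$ and compatible with the tensor factorization. So with $Be_u=e_{\kappa u}$ one gets $BJ^\tau B^{-1}=\eta(\kappa)J=-J$, and hence the sign $(-1)^\beta$ for $J_\beta$. A normalized associator is unique up to sign, so your abstract $J_\beta$ equals $\pm J^{\otimes\beta}$. Nothing is lost once the explicit candidate is written down, as you do.

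Two small corrections to your aside:
\begin{enumerate}
\item The explicit form is needed for this Galois step, not for the factor set. The constructions of $Q$, $U_j$ and $\mathcal A_j$ use only $J_\beta^2=I$ and the associator relation.
\item The induced module from $\mathbb F_p$ has $p-1$ basis vectors indexed by $\mathbb F_p^\times$ itself, not by cosets of $\ker\nu$. Your final formula $e_u\mapsto\nu(u)e_u$ is nevertheless exactly the paper's $J$.
\end{enumerate}
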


\begin{proof}
Let $a$ be a generator of $\mathbb F_p^\times$. Set
\[
b=(1,1)\in G_p
\quad\text{and}\quad
c=(0,a)\in G_p.
\]
Then $b$ generates the normal subgroup of order $p$ of $G_p$ and
$cbc^{-1}=b^a$. 
Let $\beta$ be the non-trivial linear character of $\langle b\rangle$
defined by
$\beta(b)=\zeta_p$.
The unique nonlinear irreducible character $\Phi$ of $G_p$ is afforded
by
\[
\varphi=\Ind_{\langle b\rangle}^{G_p}\beta.
\]
Using the induced model with basis
$(e_u)_{u\in\mathbb F_p^\times}$, we define
\[
\varphi(b)e_u=\zeta_p^u e_u
\quad\text{and}\quad
\varphi(c)e_u=e_{a^{-1}u}.
\]
Let $\eta$ be the unique character of order $2$ of
$\mathbb F_p^\times$, and denote again by $\eta$ its inflation to
$G_p$. Define, for $u\in\mathbb F_p^{\times}$
\begin{equation}
\label{eq:defassbase}
Je_u=\eta(u)e_u.
\end{equation}
Then $J^2=I$. Moreover, $J$ commutes with $\varphi(b)$, and
$J\varphi(c)J^{-1} = \eta(a)\varphi(c)$.
Since $b$ and $c$ generate $G_p$, it follows that, for $g\in G_p$,
\[
J\varphi(g)J^{-1}
=
\eta(g)\varphi(g).
\]
Let $ \varphi_\beta=\varphi^{\boxtimes \beta} $ be the corresponding
representation of $ N_\beta $ and set
\[
J_\beta=J^{\otimes \beta}.
\]
If $g=(g_1,\ldots,g_\beta)\in G_p^\beta$, then
\[
J_\beta\varphi_\beta(g)J_\beta^{-1}
=
\eta(g_1)\cdots\eta(g_\beta)\varphi_\beta(g).
\]
Under the identification $N_\beta\simeq G_p^\beta$, the restriction of
$\varepsilon$ to each factor $G_p$ is $\eta$, and 
\[
\varepsilon|_{N_\beta}=\eta^{\boxtimes \beta}.
\]
Hence,
\[
J_\beta\varphi_\beta(g)J_\beta^{-1}
=
\varepsilon(g)\varphi_\beta(g).
\]
If $x\in N_\beta^+$ and $\bar x$ denotes its image in
$N_\beta\simeq G_p^\beta$, then
\[
R_\beta(x)=\Lambda_\beta(x)\varphi_\beta(\bar x).
\]
Since $\Lambda_\beta(x)$ is a scalar, it commutes with $J_\beta$, and thus
\[
J_\beta R_\beta(x)J_\beta^{-1}
=
\varepsilon(x)R_\beta(x).
\]
Finally, $J_\beta^2=I$, since $J^2=I$.
\end{proof}

\begin{lemma}
\label{lem:TD}
Let $V$ be the representation space of $R_\beta$, and let
$J_\beta$ be an associator for $R_\beta$.
Set
\begin{equation}
\label{eq:defQ}
Q
=
\frac12\bigl(
I\otimes I
+J_\beta\otimes I
+I\otimes J_\beta
-J_\beta\otimes J_\beta
\bigr)
\in\End(V\otimes V).
\end{equation}
Then $Q^2=I$, and for every $x\in N_\beta^+$,
\[
Q(R_\beta(x)\otimes I)Q^{-1}
=
R_\beta(x)\otimes J_\beta^{\partial(x)}
\quad \text{and}\quad
Q\bigr(J_\beta^{\partial(x)}\otimes R_\beta(x)\bigr)Q^{-1}
=
I\otimes R_\beta(x).
\]
Moreover, if we set $P: V\otimes V\longrightarrow V\otimes V,\ v\otimes
w\longmapsto w\otimes v$, then
$
PQ=QP.
$
\end{lemma}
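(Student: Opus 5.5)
The plan is to diagonalize the problem via the eigenspace decomposition of $J_\beta$. Since $J_\beta^2=I$, write $V=V_+\oplus V_-$ for the $\pm1$-eigenspaces and set $e_\pm=\frac12(I\pm J_\beta)$. Then $V\otimes V$ splits into the four pieces $V_s\otimes V_t$ with $s,t\in\{\pm\}$, on which $J_\beta\otimes I$ and $I\otimes J_\beta$ act by the scalars $s$ and $t$. On $V_s\otimes V_t$ the operator $Q$ therefore acts by the scalar
\[
\tfrac12(1+s+t-st),
\]
which equals $1$ for $(s,t)\neq(-,-)$ and $-1$ for $(s,t)=(-,-)$. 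Hence $Q$ is diagonal with eigenvalues $\pm1$, and $Q^2=I$. The same computation shows $Q=I-2\,e_-\otimes e_-$. This form is convenient because $Q^{-1}=Q$ and it is symmetric in the two factors.

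For $PQ=QP$, note that $P(A\otimes B)P=B\otimes A$. The expression \eqref{eq:defQ} is invariant under swapping the two tensor factors, so $PQP=Q$.

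For the conjugation identities, use \eqref{eq:assoD}. It says that $R_\beta(x)$ commutes with $J_\beta$ when $\partial(x)=0$ and anticommutes with it when $\partial(x)=1$. Equivalently, $R_\beta(x)$ preserves each eigenspace $V_\pm$ if $\partial(x)=0$ and exchanges $V_+$ and $V_-$ if $\partial(x)=1$.

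\textbf{Case $\partial(x)=0$.} Both $R_\beta(x)\otimes I$ and $I\otimes R_\beta(x)$ commute with $J_\beta\otimes I$ and $I\otimes J_\beta$, hence with $Q$. Both identities then hold trivially, since $J_\beta^0=I$.

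\textbf{Case $\partial(x)=1$.} Set $A=R_\beta(x)$, so that $AJ_\beta=-J_\beta A$. I will compute $Q(A\otimes I)Q$ directly from $Q=I-2\,e_-\otimes e_-$, using $Ae_-=e_+A$. Expanding,
\[
Q(A\otimes I)Q=A\otimes I-2\,e_+A\otimes e_- -2\,e_-A\otimes e_- +4\,e_-e_+A\otimes e_-.
\]
The last term vanishes because $e_-e_+=0$. The middle two terms combine to $-2A\otimes e_-$, since $e_++e_-=I$. This leaves
\[
A\otimes(I-2e_-)=A\otimes J_\beta,
\]
which is the first identity. For the second, note that $J_\beta\otimes A$ is exactly the $P$-conjugate of $A\otimes J_\beta$. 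Conjugating the first identity by $P$ and using $PQ=QP$ gives
\[
Q(I\otimes A)Q=J_\beta\otimes A.
\]
Multiplying on both sides by $Q=Q^{-1}$ yields $Q(J_\beta\otimes A)Q^{-1}=I\otimes A$, as required.

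The only genuine point is the bookkeeping in the expansion with the projectors; the main obstacle is getting the order of $e_\pm$ right when moving them past $A$. As a cross-check, one can verify the first identity block by block on the decomposition $V_s\otimes V_t$, since $A\otimes I$ maps $V_s\otimes V_t$ to $V_{-s}\otimes V_t$.
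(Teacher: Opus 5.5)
Your proof is correct and uses the same idea as the paper's: decompose along the $\pm1$-eigenspaces of $J_\beta$, and use that $R_\beta(x)$ commutes with $J_\beta$ when $\partial(x)=0$ and anticommutes with it when $\partial(x)=1$. The difference is only in bookkeeping. The paper writes $Q=I\otimes P^++J_\beta\otimes P^-$, so $Q$ acts as $J_\beta\otimes I$ on $V\otimes V^-$ and the intertwining is immediate without a case split; it gets the second identity from the mirror form $Q=P^+\otimes I+P^-\otimes J_\beta$. You instead use the reflection form $Q=I-2e_-\otimes e_-$, expand with projectors, and obtain the second identity by conjugating the first by $P$.
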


\begin{proof}
For $x\in N_\beta^+$, we define $\pi(x)=R_\beta(x)\otimes I$ and
$\pi'(x)=R_\beta(x)\otimes J_\beta^{\partial(x)}$. For
$\epsilon\in\{\pm\}$, we set $V^{\epsilon}=\ker(J_\beta-\epsilon I)$ the
eigenspace $J_\beta$ associated with $\epsilon$. We remark that
$\pi(x)=\pi'(x)$ on $V\otimes V^+$ and $\pi'(x)=J_\beta R_\beta(x)
J_{\beta}^{-1}\otimes I$ on $V$. Thus, 
$$Q=\left\{\begin{array}{c}
I\otimes I \ \text{on } V\otimes V^+\\
J_\beta\otimes I\ \text{on } V\otimes V^-
\end{array}
\right.$$
intertwines $\pi(x)$ and $\pi'(x)$. In particular, if we denote by
$P^{\epsilon}$ the projections onto $V^\epsilon$, then we have
$$Q=I\otimes P^++J_\beta\otimes P^-.$$
Now, using the expression
\[
P^\epsilon=\frac12(I+\epsilon J_\beta),
\]
we deduce the expression \eqref{eq:defQ}.
By a symmetric argument, the second relation is intertwined by 
$P^+\otimes I + P^-\otimes J_\beta$, which is equal to $Q$.
Finally,
\[
Q^2
=
I\otimes P^+ + J_\beta^2\otimes P^-
=
I\otimes (P^++P^-)= I\otimes I.
\]
The defining expression of $Q$ is invariant under exchange of the two
tensor factors. Hence, $PQ=QP$.
\end{proof}

Let $r\geq 1$.
Define $E:(N_\beta^+)^{\widehat{\times} r}\longrightarrow \GL(V^{\otimes
r})$ by setting, for all $x\in N_\beta^+$,
\begin{equation}
\label{eq:defESA}
E\bigr(x^{(j)}\bigr)
=
\bigr(J_\beta^{(1)}
\cdots
J_\beta^{(j-1)}\bigr)^{\partial(x)}
R_\beta(x)^{(j)}. 
\end{equation}

\begin{remark}
The formulas in~\eqref{eq:defESA} indeed define a representation. It is enough to
check that the prescribed operators $E(x^{(j)})$ satisfy
relations~{(i)}--{(iii)} above.
For relation~{(i)}, let $x,y\in N_\beta^+$ and fix $j$. Since
$R_\beta(x)^{(j)}$ commutes with $J^{(1)},\ldots,J^{(j-1)}$, we have
\[
\begin{aligned}
E\bigr(x^{(j)}\bigr)E\bigr(y^{(j)}\bigr)
&=
\bigr(J_\beta^{(1)}\cdots J_\beta^{(j-1)}\bigr)^{\partial(x)+\partial(y)}
R_\beta(xy)^{(j)}\\
&=
E\bigr((xy)^{(j)}\bigr),
\end{aligned}
\]
because
$
\partial(xy)\equiv\partial(x)+\partial(y)\mod2
$
and $J_k^2=I$.
We next verify relation~{(ii)}. Assume that $j<k$.  Since
\[
R_\beta(x)^{(j)}(J_\beta^{(j)})^{\partial(y)}
=
(-1)^{\partial(x)\partial(y)}
(J_\beta^{(j)})^{\partial(y)}R_\beta(x)^{(j)},
\]
we obtain
\[
E\bigr(x^{(j)}\bigr)E\bigr(y^{(k)}\bigr)
=
(-1)^{\partial(x)\partial(y)}
E\bigr(y^{(k)}\bigr)E\bigr(x^{(j)}\bigr) 
=
E(z)^{\partial(x)\partial(y)}
E\bigr(y^{(k)}\bigr)E\bigr(x^{(j)}\bigr).
\]
Finally, relation~{(iii)} is satisfied since, for every $j$, $
E\bigr(z^{(j)}\bigr)
= R_\beta(z)^{(j)} = -I $.
\end{remark}

For $1\leq j<r$, let $Q_j$ and $P_j$ denote respectively the operators
$Q$ and $P$ acting on the $j$th and $(j+1)$st tensor factors and as the
identity on all the others. In particular,~\eqref{eq:defQ} gives
\[
Q_j
=
\frac12\Bigl(
I+J_\beta^{(j)}+J_\beta^{(j+1)}
-J_\beta^{(j)}J_\beta^{(j+1)}
\Bigr).
\]
We define 
$$
U_j=P_jQ_j.$$

\begin{lemma}
\label{lem:coxUi}
The operators $U_1,\ldots,U_{r-1}$ satisfy the Coxeter relations
\[
U_j^2=I,\quad
U_jU_k=U_kU_j\quad\text{if }|j-k|>1
\quad\text{and}\quad
U_jU_{j+1}U_j
=
U_{j+1}U_jU_{j+1}.
\]
Hence, $s_j\mapsto U_j$ defines a representation of
$\sym_r$ on $V^{\otimes r}$.
\end{lemma}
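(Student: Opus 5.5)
We verify the three Coxeter relations directly, working on $V^{\otimes r}$ and the eigenspace decomposition of each $J_\beta^{(k)}$. The key observation is that $Q$, $P$ and all the $J_\beta^{(k)}$ are simultaneously diagonal or permutational with respect to a basis adapted to $V=V^+\oplus V^-$. Fix a basis of $V$ consisting of eigenvectors of $J_\beta$, which exists since $J_\beta^2=I$. For a basis vector $v$ write $\epsilon(v)\in\{\pm1\}$ for its eigenvalue.

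By Lemma~\ref{lem:TD}, $Q=I\otimes P^++J_\beta\otimes P^-$. Hence on $v\otimes w$ it acts by the scalar $q(\epsilon(v),\epsilon(w))$, which equals $-1$ exactly when $\epsilon(v)=\epsilon(w)=-1$ and equals $1$ otherwise. One can also read this off \eqref{eq:defQ}: $\tfrac12(1+a+b-ab)$ is $-1$ iff $a=b=-1$. Thus $Q_j$ is the diagonal sign operator $(-1)^{n_j n_{j+1}}$, where $n_k\in\{0,1\}$ records whether the $k$-th tensor factor lies in $V^-$.

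Consequently each $U_j=P_jQ_j$ is a signed permutation operator on the product basis: it swaps factors $j$ and $j+1$ and multiplies by $(-1)^{n_jn_{j+1}}$. This sign is symmetric in the two swapped slots, which is exactly the content of $PQ=QP$ in Lemma~\ref{lem:TD}.

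The relations then follow one by one.
\begin{enumerate}[(i)]
\item $U_j^2=P_jQ_jP_jQ_j=Q_j^2P_j^2=I$, using $PQ=QP$, $Q^2=I$ and $P^2=I$.
\item For $|j-k|>1$, the operators $U_j$ and $U_k$ act on disjoint pairs of tensor factors, so they commute.
\item For the braid relation, compare the two sides on a basis vector $v_1\otimes\cdots\otimes v_r$. Both $U_jU_{j+1}U_j$ and $U_{j+1}U_jU_{j+1}$ induce the same permutation of factors, namely the transposition of slots $j$ and $j+2$, because the $P$'s satisfy \eqref{eq:relationfliptresse}. It therefore suffices to compare the signs.
\end{enumerate}

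For the signs in (iii), note that each $Q$ is evaluated on the current adjacent pair. Let $a,b,c$ be the $n$-values of factors $j,j+1,j+2$.
\begin{itemize}
\item Along $U_jU_{j+1}U_j$ (rightmost first), the pairs met are $(a,b)$, then $(a,c)$ after the first swap, then $(b,c)$.
\item Along $U_{j+1}U_jU_{j+1}$, the pairs met are $(b,c)$, then $(a,c)$, then $(a,b)$.
\end{itemize}
In both cases the total sign is $(-1)^{ab+ac+bc}$, so the two sides agree.

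Conceptually, the sign is the cocycle $(-1)^{\sum_{\text{inversions}} n_in_k}$ of the permutation action, which is multiplicative because the sign of a pair depends only on the unordered pair of vectors.

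The only delicate step is this bookkeeping. One must confirm that each $Q_j$ is applied after the preceding swaps and reads the eigenvalues currently in slots $j,j+1$. With the diagonal description of $Q$ this is routine.

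Since $\sym_r$ has the Coxeter presentation on $s_1,\ldots,s_{r-1}$, the assignment $s_j\mapsto U_j$ extends to a representation of $\sym_r$ on $V^{\otimes r}$.
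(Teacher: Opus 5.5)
Your proof is correct and is essentially the paper's argument written in an eigenbasis of $J_\beta$. Your diagonal sign $(-1)^{n_an_b}$ is exactly the operator $Q_{a,b}=\frac12\bigl(I+J_\beta^{(a)}+J_\beta^{(b)}-J_\beta^{(a)}J_\beta^{(b)}\bigr)$ that the paper introduces, and your ``pairs met'' bookkeeping is the paper's step of moving the $P$'s past the commuting $Q$'s via $P_jQ_{a,b}P_j^{-1}=Q_{s_j(a),s_j(b)}$, which rewrites both sides of the braid relation as $P_jP_{j+1}P_j$ (resp.\ $P_{j+1}P_jP_{j+1}$) times $Q_{j,j+1}Q_{j,j+2}Q_{j+1,j+2}$.
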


\begin{proof}
For distinct $a,b\in\{1,\ldots,r\}$, we define
$$Q_{a,b}=\frac{1}{2}(I+J_\beta^{(a)} + J_\beta^{(b)} -
J_\beta^{(a)}J_\beta^{(b)}).$$
In particular, $Q_j=Q_{j,j+1}$. All the operators $Q_{a,b}$ commute with one
another, and 
\[
P_jQ_{a,b}P_j^{-1}
=
Q_{s_j(a),s_j(b)}.
\]
Since $P_j$ commutes with $Q_j$ and both are involutions, we have
$U_j^2=I$.
If $|j-k|>1$, then $P_j$ commutes with $Q_k$ and $P_k$ commutes with
$Q_j$, so that $ U_jU_k=U_kU_j$.
Finally, we have
\[
U_jU_{j+1}U_j
=
P_jP_{j+1}P_j\,
Q_{j,j+1}Q_{j,j+2}Q_{j+1,j+2},
\]
and
\[
U_{j+1}U_jU_{j+1}
=
P_{j+1}P_jP_{j+1}\,
Q_{j,j+1}Q_{j,j+2}Q_{j+1,j+2}.
\]
The conclusion follows from the usual braid relation
$
P_jP_{j+1}P_j
=
P_{j+1}P_jP_{j+1}$.
\end{proof}

We write
\[
J_{\beta,j}^{\perp}
=
\prod_{\substack{1\leq k\leq r\\ k\neq j,j+1}}
J_\beta^{(k)},
\]
with the convention that an empty product is the identity, and define
\begin{equation}
\label{eq:defAj}
\mathcal A_j= J_{\beta,j}^{\perp}P_jQ_j.
\end{equation}

\begin{proposition} 
With the notation as above, for $1\leq j<r$ and $u\in
(N_\beta^+)^{\widehat{\times} r}$, one has
\[
\mathcal A_j E(u)\mathcal A_j^{-1}
=
E(t_jut_j^{-1}).
\]
Moreover, the operators $\mathcal A_j$ satisfy the ordinary Coxeter
relations. Hence, they determine operators $\mathcal A_\sigma$ for
$\sigma\in\sym_r$, such that
\begin{equation}
\label{eq:relcoxAsigma}
\mathcal A_\sigma\mathcal A_\tau
=
\mathcal A_{\sigma\tau}.
\end{equation}
\end{proposition}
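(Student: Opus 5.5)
The plan is to verify both assertions directly on generators, using Lemma~\ref{lem:TD} for the conjugation formula and reducing the Coxeter relations for the $\mathcal A_j$ to those of the $U_j$ in Lemma~\ref{lem:coxUi}.

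\medskip
\noindent\textbf{Conjugation formula.} Since both sides of $\mathcal A_jE(u)\mathcal A_j^{-1}=E(t_jut_j^{-1})$ are multiplicative in $u$, it suffices to take $u=x^{(k)}$ with $x\in N_\beta^+$. I will repeatedly use three facts.
\begin{enumerate}[(a)]
\item $R_\beta(x)^{(k)}$ commutes with $J_\beta^{(m)}$ for $m\neq k$.
\item $J_\beta^{(k)}R_\beta(x)^{(k)}(J_\beta^{(k)})^{-1}=\varepsilon(x)R_\beta(x)^{(k)}$.
\item $P_j$ and $Q_j$ commute with every $J_\beta^{(m)}$, $m\neq j,j+1$, with the product $J_\beta^{(j)}J_\beta^{(j+1)}$, and with every $R_\beta(x)^{(m)}$, $m\neq j,j+1$.
\end{enumerate}

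For $k=j$ we have $E(x^{(j)})=(J_\beta^{(1)}\cdots J_\beta^{(j-1)})^{\partial(x)}R_\beta(x)^{(j)}$, and $J_{\beta,j}^\perp$ commutes with it by (a). Lemma~\ref{lem:TD}, applied on factors $j,j+1$, gives
\[
Q_j\,R_\beta(x)^{(j)}\,Q_j^{-1}=R_\beta(x)^{(j)}(J_\beta^{(j+1)})^{\partial(x)}.
\]
Conjugating by $P_j$ turns this into $(J_\beta^{(j)})^{\partial(x)}R_\beta(x)^{(j+1)}$. The prefix $(J_\beta^{(1)}\cdots J_\beta^{(j-1)})^{\partial(x)}$ is untouched by (c), so the result is $E(x^{(j+1)})$.

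For $k=j+1$, I will use the second identity of Lemma~\ref{lem:TD},
\[
Q\bigl(J_\beta^{\partial(x)}\otimes R_\beta(x)\bigr)Q^{-1}=I\otimes R_\beta(x),
\]
followed by $P_j$. This gives $E(x^{(j)})$ in the same way.

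For $k\neq j,j+1$, the operator $P_jQ_j$ commutes with $E(x^{(k)})$ by (c). If $k>j+1$, the prefix of $E(x^{(k)})$ contains $J_\beta^{(j)}J_\beta^{(j+1)}$ as a block, which (c) handles; if $k<j$, the prefix does not involve positions $j,j+1$ at all. In both cases $J_{\beta,j}^\perp$ contains exactly one factor $J_\beta^{(k)}$ that fails to commute, and by (b) it contributes $\varepsilon(x)=(-1)^{\partial(x)}$. This matches $t_jx^{(k)}t_j^{-1}=z^{\partial(x)}x^{(k)}$, since $E(z)=-I$.

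\medskip
\noindent\textbf{Coxeter relations.} Let $J_{\rm tot}=J_\beta^{(1)}\cdots J_\beta^{(r)}$. Since $J_\beta^2=I$,
\[
\mathcal A_j=J_{\rm tot}\,B_j,\qquad B_j=J_\beta^{(j)}J_\beta^{(j+1)}U_j .
\]
The operator $J_{\rm tot}$ is an involution commuting with every $P_m$, $Q_m$ and $J_\beta^{(m)}$. Moreover $J_\beta^{(j)}J_\beta^{(j+1)}$ commutes with $U_j$ by (c). Hence $\mathcal A_j^2=U_j^2=I$.

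For $|j-k|>1$, the operator $U_k$ commutes with $J_\beta^{(j)}J_\beta^{(j+1)}$, and $U_j$ commutes with $J_\beta^{(k)}J_\beta^{(k+1)}$. Therefore $\mathcal A_j\mathcal A_k=\mathcal A_k\mathcal A_j$ follows from $U_jU_k=U_kU_j$.

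For the braid relation, the powers of $J_{\rm tot}$ agree on both sides (three each), so it suffices to compare $B_jB_{j+1}B_j$ with $B_{j+1}B_jB_{j+1}$. I push all $J$-factors to the left using $U_mJ_\beta^{(a)}=J_\beta^{(s_m(a))}U_m$, which holds because $Q_m$ commutes with all $J_\beta^{(a)}$ and $P_m$ permutes them. For the left-hand side the collected products are
\[
J_\beta^{(j)}J_\beta^{(j+1)},\qquad J_\beta^{(j)}J_\beta^{(j+2)},\qquad J_\beta^{(j+1)}J_\beta^{(j+2)},
\]
whose product is $I$, leaving $U_jU_{j+1}U_j$. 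The symmetric computation reduces the right-hand side to $U_{j+1}U_jU_{j+1}$. The two agree by Lemma~\ref{lem:coxUi}.

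Since the $\mathcal A_j$ satisfy the Coxeter presentation of $\sym_r$, the assignment $s_j\mapsto\mathcal A_j$ extends to a homomorphism $\sigma\mapsto\mathcal A_\sigma$, which is~\eqref{eq:relcoxAsigma}.

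\medskip
\noindent\textbf{Main obstacle.} The step requiring the most care is the sign bookkeeping in the case $k\neq j,j+1$. One must check that exactly one non-commuting $J_\beta^{(k)}$ occurs, and that the prefix factors $J_\beta^{(j)},J_\beta^{(j+1)}$ in $E(x^{(k)})$ for $k>j+1$ always appear together, so that $P_jQ_j$ commutes with them.
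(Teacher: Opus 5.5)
Your proposal is correct and follows essentially the same route as the paper. Like the paper, you check the conjugation formula on the generators $x^{(k)}$ via Lemma~\ref{lem:TD}, and you reduce the Coxeter relations for the $\mathcal A_j$ to those of the $U_j$ (Lemma~\ref{lem:coxUi}), using that $U_j$ permutes the $J_\beta^{(k)}$. Your factorization $\mathcal A_j=J_{\mathrm{tot}}\,J_\beta^{(j)}J_\beta^{(j+1)}U_j$ just makes explicit the $J$-bookkeeping for the braid relation that the paper compresses into a single sentence.
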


\begin{proof}
Set $U_j=P_jQ_j$.
By Lemma~\ref{lem:TD}, we have
\[
U_jR_\beta(x)^{(j)}U_j^{-1}
=
(J_\beta^{(j)})^{\partial(x)}R_\beta(x)^{(j+1)}
\]
and
\[
U_jR_\beta(x)^{(j+1)}U_j^{-1}
=
R_\beta(x)^{(j)}(J_\beta^{{j+1}})^{\partial(x)}.
\]
Moreover, we have
\[
U_jJ_\beta^{(j)}U_j^{-1}=J_\beta^{(j+1)}
\quad\text{and}\quad
U_jJ_\beta^{(j+1)}U_j^{-1}=J_\beta^{(j)},
\]
and $U_j$ commutes with $J_\beta^{(k)}$ for $k\neq j,j+1$. It follows that
\[
\mathcal A_jE\bigr(x^{(j)}\bigr)\mathcal A_j^{-1}
=
E\bigr(x^{(j+1)}\bigr)
\quad
\text{and}
\quad
\mathcal A_jE\bigr(x^{(j+1)}\bigr)\mathcal A_j^{-1}
=
E\bigr(x^{(j)}\bigr).
\]
If $k\neq j,j+1$, then $U_j$ commutes with $E(x^{(k)})$, 
and
\[
\begin{aligned}
\mathcal A_jE(x^{(k)})\mathcal A_j^{-1}
&=
J_{\beta,j}^{\perp}E(x^{(k)}) (J_{\beta,j}^{\perp})^{-1}\\
&=
(-1)^{\partial(x)}E(x^{(k)})\\
&=
E\bigl(z^{\partial(x)}x^{(k)}\bigr).
\end{aligned}
\]
Since $U_j$ commutes with $J_{\beta,j}^{\perp}$, we have $ \mathcal A_j^2
=I$. Using Lemma~\ref{lem:coxUi} and $
U_jJ_\beta^{(j)}U_j^{-1}=J_\beta^{(j+1)}$ and
$U_jJ_\beta^{(j+1)}U_j^{-1}=J_\beta^{(j)}$, we obtain
\[
\mathcal A_j\mathcal A_{j+1}\mathcal A_j
=
\mathcal A_{j+1}\mathcal A_j\mathcal A_{j+1}
\quad\text{and}\quad
\mathcal A_j\mathcal A_k
=
\mathcal A_k\mathcal A_j
\quad
\text{for }|j-k|>1.
\]
Hence, the $\mathcal A_j$ satisfy the same Coxeter relations as the transpositions
$s_j$. Therefore, by the Coxeter presentation of $\sym_r$, there is a
unique representation
\[
\mathcal A:\sym_r\longrightarrow\GL(V^{\otimes r})
\]
such that
\[
\mathcal A(s_j)=\mathcal A_j
\qquad
(1\leq j<r).
\]
For $\sigma\in\sym_r$, we write
\[
\mathcal A_\sigma=\mathcal A(\sigma).
\]
In particular, $\mathcal A_\sigma\mathcal A_\tau = \mathcal A_{\sigma\tau}
$ for all $\sigma,\tau\in\sym_r$,  and the result follows.
\end{proof}

For each $\sigma\in\sym_r$, choose once and for all a reduced expression
$\sigma=s_{i_1}\cdots s_{i_\ell}$, and define $ t_\sigma=t_{i_1}\cdots
t_{i_\ell}$.
The element $t_\sigma$ depends in general on the chosen reduced
expression, up to multiplication by $z$, and the set
\begin{equation}
\label{eq:transversal}
\{t_\sigma\mid \sigma\in\sym_r\}
\end{equation}
is a transversal for $\langle z\rangle$ in the corresponding copy of
$S_{\beta,r}^+$. 
For $\sigma,\,\tau\in\sym_r$, there is $c(\sigma,\tau)\in \langle
z\rangle$ such that
\[
t_\sigma t_\tau
=
c(\sigma,\tau)t_{\sigma\tau}.
\]

\begin{corollary}
The Clifford factor set associated with $E$, the $\mathcal A_\sigma$ and
the transversal \eqref{eq:transversal} is
$$\alpha(\sigma,\tau)=E(c(\sigma,\tau))^{-1}.$$
\end{corollary}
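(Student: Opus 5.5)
We follow the standard recipe for projective extensions in Clifford theory. By definition, the Clifford factor set attached to the representation $E$ of the normal subgroup $B=(N_\beta^+)^{\widehat\times r}$, to the operators $\mathcal A_\sigma$ and to the transversal $\{t_\sigma\}$ is obtained as follows. First we define $\widehat E(bt_\sigma)=E(b)\mathcal A_\sigma$ for $b\in B$. Then $\alpha(\sigma,\tau)$ is the scalar satisfying
\[
\widehat E(t_\sigma)\widehat E(t_\tau)=\alpha(\sigma,\tau)\,\widehat E(t_{\sigma\tau}).
\]
The plan is to compute both sides directly, using the previous proposition and the relation $t_\sigma t_\tau=c(\sigma,\tau)t_{\sigma\tau}$.

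The first step is to check that the $\mathcal A_\sigma$ are the right intertwiners, namely $\mathcal A_\sigma E(u)\mathcal A_\sigma^{-1}=E(t_\sigma u t_\sigma^{-1})$ for all $u\in B$. This follows from the previous proposition by induction on the length of the chosen reduced expression $\sigma=s_{i_1}\cdots s_{i_\ell}$. We write $\mathcal A_\sigma=\mathcal A_{i_1}\cdots\mathcal A_{i_\ell}$, which holds because $\mathcal A$ is a homomorphism, and note that $t_\sigma=t_{i_1}\cdots t_{i_\ell}$. Conjugating successively by each factor gives the claim. In particular, $\widehat E$ is a well-defined projective representation of $(N_\beta\wr\sym_r)^+$ extending $E$.

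Next we compare the two sides. On one side, $\mathcal A_\sigma\mathcal A_\tau=\mathcal A_{\sigma\tau}$ by \eqref{eq:relcoxAsigma}. On the other side, the group identity $t_\sigma t_\tau=c(\sigma,\tau)t_{\sigma\tau}$ with $c(\sigma,\tau)\in\langle z\rangle\le B$ means that a genuine representation would have to satisfy $\widehat E(t_\sigma)\widehat E(t_\tau)=E(c(\sigma,\tau))\widehat E(t_{\sigma\tau})$. We therefore compare $\mathcal A_{\sigma\tau}$, which is what the operators actually produce, with $E(c(\sigma,\tau))\mathcal A_{\sigma\tau}$, which is what the group relation demands. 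Since $E(z)=-I$, the element $E(c(\sigma,\tau))=\pm I$ is a central scalar.

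Under the convention that the factor set measures the discrepancy $\widehat E(t_\sigma)\widehat E(t_\tau)=\alpha(\sigma,\tau)E(c(\sigma,\tau))\widehat E(t_{\sigma\tau})$, we get
\[
\alpha(\sigma,\tau)=E(c(\sigma,\tau))^{-1}.
\]
This equals $E(c(\sigma,\tau))$ itself, since it is $\pm1$. Equivalently, it is the scalar by which the twisted map $\sigma\mapsto\mathcal A_\sigma$ fails to respect the multiplication in $S_{\beta,r}^+$. The main point requiring care is fixing the normalization convention for the Clifford factor set, namely the form $\widehat E(gt_\sigma)=E(g)\mathcal A_\sigma$ and which side the scalar sits on. Once that convention is fixed, the computation is immediate from \eqref{eq:relcoxAsigma} and $E(z)=-I$.
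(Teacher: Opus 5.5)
Your proposal is correct and follows the paper's argument. Both rest on the characterization $\mathcal A_\sigma\mathcal A_\tau=\alpha(\sigma,\tau)E(c(\sigma,\tau))\mathcal A_{\sigma\tau}$ combined with \eqref{eq:relcoxAsigma}, and your inductive check that $\mathcal A_\sigma E(u)\mathcal A_\sigma^{-1}=E(t_\sigma u t_\sigma^{-1})$ just spells out what the paper takes from the preceding proposition. The one slip is your first display, whose right-hand side should be $\widehat E(t_\sigma t_\tau)=E(c(\sigma,\tau))\mathcal A_{\sigma\tau}$ rather than $\widehat E(t_{\sigma\tau})$; as written it would force $\alpha\equiv1$, but the convention you finally adopt is the right one.
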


\begin{proof}

The Clifford factor set $\alpha$ associated with the operators $\mathcal
A_\sigma$ is characterized by
\[
\mathcal A_\sigma\mathcal A_\tau
=
\alpha(\sigma,\tau)
E\bigl(c(\sigma,\tau)\bigr)
\mathcal A_{\sigma\tau}.
\]
By \eqref{eq:relcoxAsigma}, we obtain
$
\alpha(\sigma,\tau)
E\bigl(c(\sigma,\tau)\bigr)=1.
$
The result follows.
\end{proof}

\begin{corollary}
\label{cor:paramSA}
The irreducible representations lying over $E$ are parametrized by the
strict partitions of $r$. More precisely, if $\lambda\in\mathcal D_r^+$,
there is a unique such representation, denoted by $\rho_{R_\beta,\lambda}$,
whereas if $\lambda\in\mathcal D_r^-$, there are two associate
representations, denoted by
$\rho_{R_\beta,\lambda}^+$ and $\rho_{R_\beta,\lambda}^-$.
\end{corollary}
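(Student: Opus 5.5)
The plan is to use Clifford theory with the explicit factor set from the preceding corollary. Write $B=(N_\beta^+)^{\widehat\times r}$ and $G=(N_\beta\wr\sym_r)^+$, so $G=B\cdot S_{\beta,r}^+$ and $G/B\simeq\sym_r$.

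First, $E$ is irreducible, since it is a twisted tensor product of irreducibles. Its character is $G$-invariant, because the operators $\mathcal A_j$ intertwine $E$ with its $t_j$-conjugates. So $E$ extends projectively to $G$ via $\mathcal A_\sigma$ on the transversal $t_\sigma$. By the preceding corollary, the associated factor set on $\sym_r$ is $\alpha(\sigma,\tau)=E(c(\sigma,\tau))^{-1}$. Here $E(z)=-I$, so $\alpha(\sigma,\tau)=\pm1$ according as $c(\sigma,\tau)=1$ or $z$. Thus $\alpha$ is exactly the cocycle of the central extension $S_{\beta,r}^+$ of $\sym_r$.

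By Clifford theory (Isaacs, Theorem 11.28), the irreducible representations of $G$ over $E$ are then in bijection with the irreducible projective representations of $\sym_r$ with factor set $\alpha^{-1}=\alpha$. Concretely, each has the form $g=bt_\sigma\mapsto E(b)\mathcal A_\sigma\otimes\theta(t_\sigma)$, where $\theta$ runs over the irreducible representations of $S_{\beta,r}^+$ with $\theta(z)=-1$, that is, the spin representations of $S_{\beta,r}^+$. One checks directly that this is a homomorphism: the sign $E(c)$ in $t_\sigma t_\tau$ is cancelled by $\theta(c)=E(c)$, using $\mathcal A_\sigma\mathcal A_\tau=\mathcal A_{\sigma\tau}$.

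The next step is to identify $S_{\beta,r}^+$ with $\tSym_r$. By \eqref{eq:splus1} and \eqref{eq:splus2}, this holds on the nose when $p^\beta\equiv1\bmod4$. When $p^\beta\equiv-1\bmod4$, we get the other double cover, with $t_j^2=1$. Replacing $t_j$ by $\mathrm{i}\,t_j$ in any spin representation, i.e.\ twisting by $\gamma_\beta$, gives a bijection between spin representations of the two covers; indeed the relations $(t_jt_{j+1})^3$ and $(t_jt_k)^2$ transform exactly as in the proof for $\widetilde E_\rho$. In either case, the spin representations are labelled by Schur's classification: each $\lambda\in\mathcal D_r^+$ gives one self-associate spin character, and each $\lambda\in\mathcal D_r^-$ gives an associated pair.

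The main obstacle is transferring association from $\tSym_r$ to $G$, since $\varepsilon$ on $G$ also sees $B$. The sign of $t_j$ in $G$ must match the sign character on $S_{\beta,r}^+$. I would argue through the associator rather than through $\varepsilon$ directly. Note that $\prod_k J_\beta^{(k)}$ conjugates $E(x^{(j)})$ to $\varepsilon(x)E(x^{(j)})$ and commutes with every $\mathcal A_j$.

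Tensoring by $\varepsilon_G$ therefore sends $E\mathcal A\otimes\theta$ to $E\mathcal A\otimes(\varepsilon_{S}\theta)$, up to this equivalence. Here I would use that $\varepsilon_G(t_j)=-1$, which holds because $t_j$ acts as a product of $p^\beta$ disjoint transpositions, and $p^\beta$ is odd.

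Hence association on $G$ over $E$ corresponds exactly to association of spin characters of $\tSym_r$, which gives the stated parametrization by $\mathcal D_r=\mathcal D_r^+\sqcup\mathcal D_r^-$.
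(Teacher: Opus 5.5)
Your proposal is correct and follows essentially the same route as the paper: projective Clifford theory with the factor set $\alpha=E(c)^{-1}$ from the preceding corollary, matched to spin representations of $S_{\beta,r}^+$ through the $\gamma_\beta$-twist. Your transfer of association, via $J_E=\prod_k J_\beta^{(k)}$ and $\varepsilon(t_j)=-1$, is the mechanism the paper uses in Corollary~\ref{cor:assogen}, and it usefully makes explicit the $\mathcal D_r^-$ pairing that the paper's own proof leaves implicit.
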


\begin{proof}
Let $\rho_\lambda$, or $\rho_\lambda^\pm$, denote the usual spin
representations of $\tSym_r$ parametrized by $\lambda$. Let
$\widetilde t_1,\ldots,\widetilde t_{r-1}$ denote the standard generators
of $\tSym_r$. The rule
\[
\rho_\lambda^{(\beta)}(t_j)
=
\gamma_\beta\,\rho_\lambda(\widetilde t_j),
\qquad
\rho_\lambda^{(\beta)}(z)=-I,
\]
and similarly in the non-self-associate case, defines the corresponding
spin representations of $S_{\beta,r}^+$. Indeed, for
$p^\beta\equiv-1\pmod4$, multiplication of the simple generators by
$\mathrm i$ changes
\[
\widetilde t_j^2=z,\qquad
(\widetilde t_j\widetilde t_{j+1})^3=z
\]
into the defining relations
\[
t_j^2=1,\qquad
(t_jt_{j+1})^3=1,
\]
and leaves the last relations unchanged. Thus, the same bar
partitions parametrize the spin representations of $S_{\beta,r}^+$.
In either case, write $\rho$ for one of these representations of
$S_{\beta,r}^+$ and set
\begin{equation}
\label{eq:defR}
R(\sigma)=\rho(t_\sigma)
\quad
\text{for }\sigma\in\sym_r.
\end{equation}
Since $ t_\sigma t_{\sigma'} = c(\sigma,\sigma')t_{\sigma\sigma'} $, we
have
\[
\begin{aligned}
R(\sigma)R(\tau)
&=
\rho\bigl(c(\sigma,\tau)\bigr)R(\sigma\tau)\\
&=
E\bigl(c(\sigma,\tau)\bigr)R(\sigma\tau)\\
&=
\alpha^{-1}(\sigma,\tau)R(\sigma\tau),
\end{aligned}
\]
because $c(\sigma,\tau)\in\{1,z\}$ and $ \rho(z)=E(z)=-I$, hence
$\rho(c(\sigma,\sigma'))=E(c(\sigma,\sigma'))$. The result follows from
the projective form of Clifford theory (see for example
\cite[Chapter~11]{isaacs}).
\end{proof}

\begin{remark}
\label{rk:notR}
With the above notation, if $\lambda\in\mathcal D_r^+$, then for $m$ in
the base group and $\sigma\in\sym_r$,
\begin{equation}
\label{eq:valmatriceproj}
\rho_{R_\beta,\lambda}(m t_\sigma)
=
E(m)\mathcal A_\sigma\otimes R_\lambda(\sigma).
\end{equation}
If $\lambda\in\mathcal D_r^-$, the same formula holds with
$\rho_{R_\beta,\lambda}$ and $R_\lambda$ replaced simultaneously by
$\rho_{R_\beta,\lambda}^\pm$ and $R_\lambda^\pm$, respectively.
\end{remark}

\begin{corollary}
\label{cor:assogen}
Assume that $\lambda\in\mathcal D_r^+$, and let $J_\lambda$ be an
associator for the spin representation $\rho_\lambda$ of $S_{\beta,r}^+$.
Then $\rho_{R_\beta,\lambda}$ is self-associate and admits the associator
\[
J_{\beta,\lambda}
=
J_E\otimes J_\lambda,
\]
where $J_E=J_\beta^{(1)}\cdots J_\beta^{(r)}$. 
\end{corollary}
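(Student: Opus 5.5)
We will check directly that $J_{\beta,\lambda}=J_E\otimes J_\lambda$ conjugates $\rho_{R_\beta,\lambda}(g)$ to $\varepsilon(g)\rho_{R_\beta,\lambda}(g)$ on a generating set. By Remark~\ref{rk:notR}, every element of the homogeneous factor has the form $mt_\sigma$, with $m$ in the base group, and $\rho_{R_\beta,\lambda}(mt_\sigma)=E(m)\mathcal A_\sigma\otimes R_\lambda(\sigma)$. Hence it suffices to treat two kinds of elements separately: the base elements $x^{(j)}$, acting by $E(x^{(j)})\otimes I$, and the generators $t_j$, acting by $\mathcal A_j\otimes\rho_\lambda(t_j)$ up to the fixed normalisation of $R_\lambda$. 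We then check $J_{\beta,\lambda}^2=I$; since $J_\beta^2=I$ and the $J_\beta^{(k)}$ commute, $J_E^2=I$, and we may normalise $J_\lambda^2=I$.

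For the base elements, note that $J_E$ commutes with every $J_\beta^{(k)}$. By \eqref{eq:assoD}, conjugating $R_\beta(x)^{(j)}$ by $J_\beta^{(j)}$ multiplies it by $\varepsilon(x)$, while the other factors $J_\beta^{(k)}$ commute with it. Then \eqref{eq:defESA} gives
\[
J_E E(x^{(j)})J_E^{-1}=\varepsilon(x)E(x^{(j)}).
\]
The factor $J_\lambda$ acts on the second tensor factor, where $E(x^{(j)})\otimes I$ is trivial. Thus $\varepsilon(x^{(j)})=\varepsilon(x)$, as required.

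For the generators $t_j$, we need $J_E\mathcal A_jJ_E^{-1}=\mathcal A_j$. Indeed, $J_E$ commutes with $J_{\beta,j}^\perp$ and with $Q_j$, since $Q_j$ is a polynomial in $J_\beta^{(j)}$ and $J_\beta^{(j+1)}$. It also commutes with $P_j$, because $J_E$ is symmetric in the $j$-th and $(j+1)$-st factors. Since $J_\lambda$ is an associator of $\rho_\lambda$, we have $J_\lambda\rho_\lambda(t_j)J_\lambda^{-1}=-\rho_\lambda(t_j)$, because $\varepsilon(t_j)=-1$ for a lift of a transposition of blocks. Here we must confirm that the $t_j$ of $S_{\beta,r}^+$ are odd in the ambient group. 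A block transposition permuting two blocks of size $p^\beta$ is a product of $p^\beta$ transpositions, which is odd since $p$ is odd. Therefore
\[
J_{\beta,\lambda}\rho_{R_\beta,\lambda}(t_j)J_{\beta,\lambda}^{-1}=-\rho_{R_\beta,\lambda}(t_j)=\varepsilon(t_j)\rho_{R_\beta,\lambda}(t_j).
\]

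Since $\rho_{R_\beta,\lambda}$ and $\varepsilon\rho_{R_\beta,\lambda}$ are both homomorphisms that agree after conjugation on generators, they are equivalent via $J_{\beta,\lambda}$. This means $\rho_{R_\beta,\lambda}$ is self-associate. The main obstacle is the compatibility with the signs: $\rho_\lambda(t_j)$ is rescaled by $\gamma_\beta$, and $R_\lambda(\sigma)=\rho_\lambda(t_\sigma)$ is a projective section rather than a homomorphism. We handle this by working only with the generators, noting that scalar rescaling by $\gamma_\beta$ does not affect anticommutation with $J_\lambda$. We also check that the sign $\varepsilon(t_j)=-1$ is consistent with the parity convention used in \eqref{eq:splus1}.
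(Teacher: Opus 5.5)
Your proposal is correct and follows essentially the paper's argument. In both, $J_E$ twists $E(x^{(j)})$ by $\varepsilon(x)$ and commutes with each $\mathcal A_j$, while $J_\lambda$ supplies the sign on the block generators. The only difference is that you verify the associator identity on the generators $x^{(j)}, t_j$ and extend multiplicatively, whereas the paper checks it directly on every $mt_\sigma$ via $\mathcal A_\sigma$ and $\sgn(\sigma)$; your explicit check that $\varepsilon(t_j)=-1$ (a block transposition is a product of $p^\beta$ transpositions) is a detail the paper leaves implicit.
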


\begin{proof}
For $x\in N_\beta^+$, the definition of $E$ and
Equation~\eqref{eq:assoD} give
$
J_EE(x^{(j)})J_E^{-1}
=
\varepsilon(x)E(x^{(j)})$.
The operators $J_{\beta,j}^\perp$ commute with $J_E$ and $U_j$
interchanges $J_\beta^{(j)}$ and $J_\beta^{(j+1)}$, and fixes all the
other factors. It follows that $J_E$ commutes with $\mathcal A_j$, and
thus, with $\mathcal A_\sigma$. 
Using Equation~\eqref{eq:valmatriceproj} and $
J_\lambda\rho_\lambda(t_\sigma)J_\lambda^{-1}
=
\sgn(\sigma)\rho_\lambda(t_\sigma)$, we obtain
\[
\begin{aligned}
J_{\beta,\lambda}
\rho_{R_\beta,\lambda}(m t_\sigma)
J_{\beta,\lambda}^{-1}
&=
\varepsilon(m)E(m)\mathcal A_\sigma
 \otimes
\sgn(\sigma)R(\sigma)\\
&=
\varepsilon(m t_\sigma)
\rho_{R_\beta,\lambda}(m t_\sigma).
\end{aligned}
\]
Thus $J_{\beta,\lambda}$ is an associator for $\rho_{R_\beta,\lambda}$.
\end{proof}

\subsection{The intermediate Young subgroup and Clifford theory}

Let $\beta\geq1$.
Recall that $\rho_\beta^{(0)}=R_\beta$ denotes the unique self-associate
irreducible spin representation of $N_\beta^+$, and choose representatives
\[
\rho_\beta^{(1)},\ldots,\rho_\beta^{(e_\beta)}
\]
of the non-self-associate association classes of irreducible spin
representations of $N_\beta^+$.
Let
\begin{equation}
\label{eq:profile}
\mathbf n_\beta
=
\bigl(
n_\beta^{(0)},n_\beta^{(1)},\ldots,n_\beta^{(e_\beta)}
\bigr)
\quad\text{such that}\quad
\sum_{j=0}^{e_\beta}n_\beta^{(j)}=n_\beta,
\end{equation}
where $n_\beta$ is defined in \eqref{eq:expansionadicn}.
Write $M_\beta=N_\beta^{n_\beta}$. We choose an irreducible spin representation
$
\theta_{\mathbf n_\beta}\in\Irr(M_\beta^+)
$
whose homogeneous components have multiplicities
$n_\beta^{(0)},\ldots,n_\beta^{(e_\beta)}$, respectively. We denote its
inertia subgroup in $H_\beta^+$ by
\[
T_\beta^+
=
I_{H_\beta^+}(\theta_{\mathbf n_\beta}).
\]
Let
\[
Y_{\mathbf n_\beta}
=
\sym_{n_\beta^{(0)}}\times
\sym_{n_\beta^{(1)}}\times\cdots\times
\sym_{n_\beta^{(e_\beta)}}
\leq
\sym_{n_\beta},
\]
and let
$
q_\beta:H_\beta^+\longrightarrow\sym_{n_\beta}
$
be the natural permutation map. We set
\[
K_\beta=q_\beta^{-1}(Y_{\mathbf n_\beta}).
\]
Note that \cite[Propositions~3.12 and~3.13]{MichlerOlsson} yields
\[
T_\beta^+\leq K_\beta\leq H_\beta^+.
\]
Moreover, we have
\[
K_\beta
\simeq
\widehat{\prod}_{j=0}^{e_\beta}
\left(
N_\beta\wr\sym_{n_\beta^{(j)}}
\right)^+.
\]
Now, by Corollary~\ref{cor:paramSA}, the association class of the
$0$-th component, corresponding to the unique self-associate elementary
factor $\rho_\beta^{(0)}$, is parametrized by a bar partition
\[
\lambda_\beta^{(0)}\in\mathcal D_{n_\beta^{(0)}}.
\]
For $1\leq j\leq e_\beta$, the association class of the $j$-th homogeneous
component, corresponding to the non-self-associate elementary
association class represented by $\rho_\beta^{(j)}$, is parametrized by an
ordinary partition
\[
\lambda_\beta^{(j)}\vdash n_\beta^{(j)}.
\]
We set
\[
\mathcal C_\beta
=
\bigl(
\lambda_\beta^{(0)},\lambda_\beta^{(1)},\ldots,
\lambda_\beta^{(e_\beta)}
\bigr).
\]

\begin{lemma}
\label{lem:typeQi}
For every choice of the partitions
$\lambda_\beta^{(0)},\ldots,\lambda_\beta^{(e_\beta)}$
described above, the tuple $\mathcal C_\beta$ determines a unique
association class of irreducible spin representations of $K_\beta$.
This class is self-associate if and only if
\[
n_\beta-\ell(\lambda_\beta^{(0)})\equiv0\mod2.
\]
\end{lemma}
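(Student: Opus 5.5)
We prove existence and uniqueness by assembling the homogeneous factors through the reduced Clifford product $\widehat\times$. Then we decide the association type by counting the homogeneous factors that are self-associate. Write $K_\beta=\widehat{\prod}_{j}K_\beta^{(j)}$ with $K_\beta^{(j)}=(N_\beta\wr\sym_{n_\beta^{(j)}})^+$.

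For $j\geq1$, Gallagher's theorem applied to the extension $\widetilde E_{\rho_\beta^{(j)}}$ attaches to $\lambda_\beta^{(j)}\vdash n_\beta^{(j)}$ the representation $E_{\rho_\beta^{(j)},\lambda_\beta^{(j)}}$ from \eqref{eq:basensahom}. By Proposition~\ref{prop:asso1}, it is self-associate if and only if $n_\beta^{(j)}$ is even. For $j=0$, Corollary~\ref{cor:paramSA} attaches to $\lambda_\beta^{(0)}\in\mathcal D_{n_\beta^{(0)}}$ either a self-associate representation (when $\lambda_\beta^{(0)}\in\mathcal D^+$) or an associate pair. By Corollary~\ref{cor:assogen}, it is self-associate exactly when $n_\beta^{(0)}-\ell(\lambda_\beta^{(0)})$ is even.

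Uniqueness up to association holds because the different elementary classes $\rho_\beta^{(j)}$ give nonconjugate base characters. Hence these data exhaust the spin representations of $K_\beta$ lying over $\theta_{\mathbf n_\beta}$ and the conjugates of $\theta_{\mathbf n_\beta}$ under $K_\beta$. A different choice of sign inside an associate pair changes the factor only by $\varepsilon$.

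Next we apply the standard theory of reduced Clifford products of spin representations, i.e.\ the index-two Clifford theory of \S\ref{subsec:indice2} applied iteratively, as in Schur's and Stembridge's treatment of $\widehat\otimes$. Take spin representations $\pi_1,\dots,\pi_s$ of groups $G_1^+,\dots,G_s^+$, each with its sign character. Suppose $a$ of them are non-self-associate. Then the irreducible constituents of their reduced tensor product $\pi_1\widehat\otimes\cdots\widehat\otimes\pi_s$ on $\widehat{\prod}G_i^+$ form a single association class. This class is self-associate if and only if $a$ is even.

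Concretely, we treat two factors first. If both are self-associate, with associators $J_1,J_2$, the product $J_1\otimes J_2$ is an associator. If exactly one is non-self-associate, the reduced product is realised on $V_1\otimes V_2$ with the other factor twisted by its associator. The result is non-self-associate, because it restricts irreducibly to the even part. If both are non-self-associate, the reduced product uses a Clifford algebra of rank $2$, as in the operators $F_k$. It is self-associate, with an associator of the form $F_2F_1$ as in Proposition~\ref{prop:asso1}. Induction on $s$ then gives the parity statement.

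Combining these facts, the class attached to $\mathcal C_\beta$ is self-associate if and only if
\[
\#\{j\geq1: n_\beta^{(j)}\ \text{odd}\}+\bigl(n_\beta^{(0)}-\ell(\lambda_\beta^{(0)})\bigr)\equiv0\mod2 .
\]
Finally, $\#\{j\geq1: n_\beta^{(j)}\ \text{odd}\}\equiv\sum_{j\geq1}n_\beta^{(j)}\mod 2$. Using \eqref{eq:profile}, the left-hand side is therefore congruent to $n_\beta-\ell(\lambda_\beta^{(0)})$ modulo $2$, which gives the criterion.

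The main obstacle is the two-factor Clifford-product step when both factors are non-self-associate. There one has to check that the reduced product really is irreducible and self-associate, rather than splitting into two non-associate pieces. The approach is to restrict to the even subgroup, where the product becomes an ordinary tensor product. Then Clifford theory with the index-four quotient $\{\pm1\}^2$ shows that exactly one class arises, and that it has the stated type.
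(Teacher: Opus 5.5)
Your proposal is correct and follows the paper's proof. You determine the association type of each homogeneous factor via Proposition~\ref{prop:asso1} and Corollary~\ref{cor:paramSA}, combine them with Humphreys' rule (the product is self-associate iff the number of non-self-associate factors is even), and reduce the resulting parity to $n_\beta-\ell(\lambda_\beta^{(0)})$ using $\sum_j n_\beta^{(j)}=n_\beta$; the only difference is that you sketch Humphreys' rule and the well-definedness of the class, where the paper simply cites \cite[Proposition~1.2]{MichlerOlsson}.
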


\begin{proof}
The $0$-th homogeneous factor is self-associate if and only if
$n_\beta^{(0)}-\ell(\lambda_\beta^{(0)})\equiv0\mod2$, and for $j\geq1$, the
$j$-th homogeneous factor is self-associate if and only if $n_\beta^{(j)}$ is
even.
Using Humphreys' rule for the twisted tensor product of spin
representations~\cite[Proposition 1.2]{MichlerOlsson}, the resulting
association class is self-associate if and only if
\[
n_\beta^{(0)}-\ell(\lambda_\beta^{(0)})
+
\sum_{j=1}^{e_\beta}n_\beta^{(j)}
\equiv0\mod2.
\]
Since $ \sum_{j=0}^{e_\beta}n_\beta^{(j)}=n_\beta $, this is equivalent to $
n_\beta-\ell(\lambda_\beta^{(0)})\equiv0\mod2$.
\end{proof}

\begin{notation}
In the self-associate case, we denote the representation by
$\Psi_{\beta,\mathcal C_\beta}$. In the non-self-associate case, we denote
the two associate representations by $\Psi_{\beta,\mathcal C_\beta}^+$ and
$\Psi_{\beta,\mathcal C_\beta}^-$.
\end{notation}

We shall use the following elementary consequence of Clifford theory.

\begin{lemma}
\label{lem:intermediate-clifford}
Let $M\triangleleft H$, let $\theta\in\Irr(M)$, and set
$
T=I_H(\theta)$.
Suppose that
\[
T\leq K\leq H.
\]
Then induction gives a bijection $ \Irr(T\mid\theta) \longrightarrow
\Irr(K\mid\theta), \ \varphi\longmapsto\Ind_T^K\varphi$. Moreover, if $
\Psi=\Ind_T^K\varphi $, then $ \Ind_K^H\Psi = \Ind_T^H\varphi $ is
irreducible.
\end{lemma}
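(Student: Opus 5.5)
The plan is to combine the Clifford correspondence \cite[Theorem~(6.11)]{isaacs} for $M\triangleleft T\le H$ with transitivity of induction. The Clifford correspondence says that $\varphi\mapsto\Ind_T^H\varphi$ is a bijection $\Irr(T\mid\theta)\to\Irr(H\mid\theta)$, and in particular each such induced character is irreducible. Since $T\le K$ and $T$ is also the inertia group of $\theta$ in $K$, namely $I_K(\theta)=T\cap K=T$, the same theorem applied to $M\triangleleft K$ shows that $\varphi\mapsto\Ind_T^K\varphi$ is a bijection $\Irr(T\mid\theta)\to\Irr(K\mid\theta)$. Here $M\triangleleft K$ because $M\le T\le K$ and $M$ is normal in $H$. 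This gives the first assertion.

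For the second assertion, let $\Psi=\Ind_T^K\varphi$. Transitivity of induction gives $\Ind_K^H\Psi=\Ind_K^H\Ind_T^K\varphi=\Ind_T^H\varphi$. This character is irreducible by the Clifford correspondence for the pair $M\triangleleft H$ with inertia group $T$.

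We should also check that $\Psi$ indeed lies over $\theta$ and that nothing is lost in the identification. By Frobenius reciprocity, $\Res_M\Psi$ contains $\Res_M\varphi$, which contains $\theta$, so $\Psi\in\Irr(K\mid\theta)$. Surjectivity onto $\Irr(K\mid\theta)$ is then part of the Clifford correspondence in $K$.

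There is no real obstacle here. The only point needing care is to note explicitly that $I_K(\theta)=T$, which uses the hypothesis $T\le K$. This is what lets us apply the Clifford correspondence at the intermediate level $K$ as well as at the top level $H$.
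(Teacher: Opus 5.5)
Your proposal is correct and follows the paper's proof: both note that $I_K(\theta)=I_H(\theta)\cap K=T$ so that the Clifford correspondence \cite[Theorem~(6.11)]{isaacs} applies at $K$, then combine transitivity of induction with the Clifford correspondence at $H$ to get irreducibility of $\Ind_K^H\Psi=\Ind_T^H\varphi$. Your extra checks ($M\triangleleft K$ and that $\Psi$ lies over $\theta$) are already part of that theorem, but they do no harm.
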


\begin{proof}
Since $ I_K(\theta) = I_H(\theta)\cap K = T$, the first assertion is the
Clifford correspondence \cite[Theorem~(6.11)]{isaacs}. Thus, for every
$\Psi\in\Irr(K\mid\theta)$, there is a unique
$\varphi\in\Irr(T\mid\theta)$ such that
$
\Psi=\Ind_T^K\varphi$. 
Again by the Clifford correspondence,
$
\Ind_T^H\varphi\in\Irr(H)
$.
Finally, by the transitivity of induction, we obtain
\[
\Ind_K^H\Psi
=
\Ind_K^H\Ind_T^K\varphi
=
\Ind_T^H\varphi,
\]
as required.
\end{proof}

\begin{lemma}
\label{lem:type-induction}
Induction from $K_\beta$ to $H_\beta^+$ preserves the association type.
More precisely, if $\Psi$ is one of the irreducible spin representations
of $K_\beta$ constructed above, then
\[
\Psi\text{ is self-associate}
\quad\Longleftrightarrow\quad
\Ind_{K_\beta}^{H_\beta^+}\Psi\text{ is self-associate}.
\]
In particular, if $\Psi^+$ and $\Psi^-$ are associate, then their
inductions to $H_\beta^+$ are distinct and associate.
\end{lemma}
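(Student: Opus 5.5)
The plan is to use Frobenius reciprocity together with Lemma~\ref{lem:intermediate-clifford}. The key structural fact is that the sign character $\varepsilon$ of $H_\beta^+$ restricts to the sign character of $K_\beta$. Hence induction commutes with association:
\[
\varepsilon\cdot\Ind_{K_\beta}^{H_\beta^+}\Psi
=
\Ind_{K_\beta}^{H_\beta^+}\bigl(\Res^{H_\beta^+}_{K_\beta}(\varepsilon)\,\Psi\bigr)
=
\Ind_{K_\beta}^{H_\beta^+}(\varepsilon\Psi).
\]
This is the standard identity $\chi\cdot\Ind\psi=\Ind(\Res\chi\cdot\psi)$. It immediately gives one direction: if $\Psi$ is self-associate, so is its induction.

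For the converse we need two facts. First, $\Ind_{K_\beta}^{H_\beta^+}\Psi$ is irreducible. Second, induction is injective on $\Irr(K_\beta\mid\theta_{\mathbf n_\beta})$, so that $\Ind\Psi^+=\Ind\Psi^-$ forces $\Psi^+=\Psi^-$.

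Irreducibility comes from Lemma~\ref{lem:intermediate-clifford}, applied with $M=M_\beta^+$, $\theta=\theta_{\mathbf n_\beta}$, $T=T_\beta^+$ and $K=K_\beta$. This uses the inclusion $T_\beta^+\leq K_\beta$ from \cite[Propositions~3.12 and~3.13]{MichlerOlsson}. One must check that $\Psi$ lies over $\theta_{\mathbf n_\beta}$, up to replacing $\theta_{\mathbf n_\beta}$ by a $K_\beta$-conjugate. This holds because the restriction of $\Psi$ to the base $M_\beta^+$ has homogeneous components of the prescribed multiplicities: by construction in Corollary~\ref{cor:paramSA} and \eqref{eq:basensahom}, each factor restricts to the homogeneous base representation.

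Injectivity also comes from the Clifford correspondence. If $\Psi^\pm=\Ind_T^{K_\beta}\varphi^\pm$, then $\Ind_K^H\Psi^\pm=\Ind_T^H\varphi^\pm$. The Clifford correspondence $\Irr(T\mid\theta)\to\Irr(H\mid\theta)$ is a bijection, so equal inductions force $\varphi^+=\varphi^-$, hence $\Psi^+=\Psi^-$.

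A point to settle first is that $\varepsilon\Psi^+=\Psi^-$ still lies over $\theta_{\mathbf n_\beta}$, possibly after conjugation. This holds because $\varepsilon\theta_{\mathbf n_\beta}$ has the same association classes of homogeneous components, and therefore is $K_\beta$-conjugate to $\theta_{\mathbf n_\beta}$, or equal to it after adjusting representatives. Alternatively, one can avoid this: if $\Ind\Psi^+=\Ind\Psi^-$, then Mackey's formula shows $\Psi^-$ is an $H_\beta^+$-conjugate of $\Psi^+$ by an element normalizing the $\theta$-orbit data. Since $q_\beta^{-1}$ of the Young subgroup is exactly the stabilizer of the multiplicity profile, that element lies in $K_\beta$, so $\Psi^-={}^{k}\Psi^+=\Psi^+$, a contradiction.

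Combining these, if $\Psi^+\neq\Psi^-$ are associate, their inductions are irreducible, distinct, and associate by the displayed identity. So the induced class is non-self-associate, which gives the stated equivalence.

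I expect the main obstacle to be the bookkeeping that $\Psi$ and $\varepsilon\Psi$ lie over the same $K_\beta$-orbit of $\theta_{\mathbf n_\beta}$. This reduces to $\varepsilon\rho_\beta^{(j)}$ being associate to $\rho_\beta^{(j)}$, which is true by definition of association classes, so conjugation within the homogeneous factors suffices.
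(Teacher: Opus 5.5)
Your first direction (twisting by $\varepsilon$ commutes with induction) and the irreducibility of $\Ind_{K_\beta}^{H_\beta^+}\Psi$ via Lemma~\ref{lem:intermediate-clifford} are fine and agree with the paper. Strictly, $\Psi$ lies over $\theta_{\mathbf n_\beta}$ or over $\varepsilon_M\theta_{\mathbf n_\beta}$, but these have the same inertia group, so that part is harmless.

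The gap is in the converse. Your main line needs $\Psi^-=\varepsilon\Psi^+$ to lie over $\theta_{\mathbf n_\beta}$. You justify this by saying that $\varepsilon_M\theta_{\mathbf n_\beta}$ has the same elementary association classes as $\theta_{\mathbf n_\beta}$ and is \emph{therefore} $K_\beta$-conjugate to it. That inference is invalid: equal elementary classes only place $\varepsilon_M\theta_{\mathbf n_\beta}$ in the same association class of $M_\beta^+$. When $u=\sum_{j\geq1}n_\beta^{(j)}$ is odd, $\theta_{\mathbf n_\beta}\neq\varepsilon_M\theta_{\mathbf n_\beta}$, and whether they are conjugate depends on the profile.

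\begin{itemize}
\item If $n_\beta^{(0)}\geq2$, they are conjugate, but for a specific reason. A lift $t_j\in K_\beta$ of a transposition of two $R_\beta$-positions satisfies $\theta_{\mathbf n_\beta}^{t_j}=\varepsilon_M\theta_{\mathbf n_\beta}$ because $R_\beta$ is self-associate; this is the computation the paper uses.
\item If $n_\beta^{(0)}\leq1$ and $u$ is odd, they lie in distinct $H_\beta^+$-orbits \cite[Proposition~3.12]{MichlerOlsson}. The simplest instance is $n_\beta=1$ with $\theta_{\mathbf n_\beta}$ a non-self-associate elementary character. Then $M_\beta^+=K_\beta=H_\beta^+$, and $\varepsilon_M\theta_{\mathbf n_\beta}\neq\theta_{\mathbf n_\beta}$ cannot be a conjugate.
\end{itemize}

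The second case is exactly the case where $\Psi$ is non-self-associate with $n_\beta^{(0)}\leq1$. There $\Psi^-$ does not lie over $\theta_{\mathbf n_\beta}$ at all, so injectivity of induction on $\Irr(K_\beta\mid\theta_{\mathbf n_\beta})$ gives nothing. The inductions are distinct for a different reason: they lie over different $H_\beta^+$-orbits. Your closing paragraph, which reduces everything to $\varepsilon\rho_\beta^{(j)}$ being associate to $\rho_\beta^{(j)}$, repeats the same error.

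Your ``alternative'' paragraph contains the idea that closes the gap, but its Mackey step is misstated. The equality $\Ind\Psi^+=\Ind\Psi^-$ does not show that $\Psi^-$ is an $H_\beta^+$-conjugate of $\Psi^+$. What restriction to $M_\beta^+$ does show is that $\varepsilon_M\theta_{\mathbf n_\beta}={}^h\theta_{\mathbf n_\beta}$ for some $h\in H_\beta^+$. Conjugation by $h$ permutes the positions of the elementary classes by $q_\beta(h)$, while $\varepsilon_M\theta_{\mathbf n_\beta}$ has the same class in every position as $\theta_{\mathbf n_\beta}$. Hence $q_\beta(h)\in Y_{\mathbf n_\beta}$, that is, $h\in K_\beta$. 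So $\Psi^-$ also lies over $\theta_{\mathbf n_\beta}$, and injectivity of the Clifford correspondence forces $\Psi^+=\Psi^-$.

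Organized as a dichotomy, your argument becomes a complete proof: either $\varepsilon_M\theta_{\mathbf n_\beta}$ is $H_\beta^+$-conjugate to $\theta_{\mathbf n_\beta}$, hence $K_\beta$-conjugate, or the two induced characters lie over different orbits. This is slightly more economical than the paper's proof. It never needs to decide which alternative occurs, so it avoids both the case split on $n_\beta^{(0)}$ and the appeal to \cite[Proposition~3.12]{MichlerOlsson}.
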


\begin{proof}
Write $\varepsilon_M$, $\varepsilon_K$ and $\varepsilon_H$ for the
restrictions of the sign character to $M_\beta^+$, $K_\beta$ and
$H_\beta^+$, respectively. Since twisting by a linear character commutes
with induction,
\[
\varepsilon_H\Ind_{K_\beta}^{H_\beta^+}\Psi
=
\Ind_{K_\beta}^{H_\beta^+}(\varepsilon_K\Psi).
\]
Hence, if $\Psi$ is self-associate, so is its induction. For the converse,
set $ r=n_\beta^{(0)}$. Suppose first that $r\geq2$. Choose a simple
transposition $s_j$ whose two entries lie in the $0$-th block. Then
$t_j\in K_\beta$, and by \cite[Lemma~3.11]{MichlerOlsson}, conjugation by
$t_j$ exchanges the two corresponding copies of $R_\beta$ and replaces
each remaining elementary factor by its associate. Since $R_\beta$ is
self-associate, Humphreys' association rule
\cite[Proposition~1.2]{MichlerOlsson} therefore gives
\begin{equation}
\label{eq:equivloc}
\theta_{\mathbf n_\beta}^{\,t_j}
=
\varepsilon_M\theta_{\mathbf n_\beta}.
\end{equation}
Now, assume that $ \chi=\Ind_{K_\beta}^{H_\beta^+}\Psi $ is
self-associate. Then $ \Ind_{K_\beta}^{H_\beta^+}\Psi \simeq
\Ind_{K_\beta}^{H_\beta^+}(\varepsilon_K\Psi) $.
Since $t_j\in K_\beta$, we have $\Psi^{t_j}=\Psi$. Hence
\eqref{eq:equivloc} shows that $\varepsilon_M\theta_{\mathbf n_\beta}$ is a
constituent of $\Res_{M_\beta^+}^{K_\beta}\Psi$. Thus both $\Psi$ and
$\varepsilon_K\Psi$ lie over $\varepsilon_M\theta_{\mathbf n_\beta}$.
The injectivity of the Clifford correspondence then gives
\[
\Psi\simeq\varepsilon_K\Psi,
\]
so that $\Psi$ is self-associate. Suppose now that $r\leq1$. Set
\[
u=n_\beta-r=\sum_{j=1}^{e_\beta}n_\beta^{(j)}.
\]
If $u$ is even, \cite[Proposition 1.2]{MichlerOlsson} shows that $\Psi$ is
self-associate. Since twisting by the sign character commutes with
induction, $\Ind_{K_\beta}^{H_\beta^+}\Psi$ is self-associate as well.
Assume now that $u$ is odd. Then $\Psi$ is non-self-associate, and
\cite[Proposition~3.12]{MichlerOlsson} shows that $\theta_{\mathbf
n_\beta}$ and $\varepsilon_M\theta_{\mathbf n_\beta}$ belong to distinct
$H_\beta^+$-orbits. By Clifford theory, the irreducible constituents of
$\Res_{M_\beta^+}^{H_\beta^+}\chi$ form the orbit of $\theta_{\mathbf
n_\beta}$, whereas those of
$\Res_{M_\beta^+}^{H_\beta^+}(\varepsilon_H\chi)$ form the orbit of
$\varepsilon_M\theta_{\mathbf n_\beta}$. Hence $\chi$ cannot be
self-associate.
This proves the lemma.
\end{proof}

We apply Lemma~\ref{lem:intermediate-clifford} with $ M=M_\beta^+$, $
H=H_\beta^+$, $\theta=\theta_{\mathbf n_\beta}$, $T=T_\beta^+$ and $
K=K_\beta$.
Hence, induction from $K_\beta$ to $H_\beta^+$ sends each of the representations
constructed above to an irreducible spin representation of $H_\beta^+$.
\begin{notation}
\label{not:RiQi}
In the self-associate case, we set
\[
\rho_{\beta,\mathcal C_\beta}
=
\Ind_{K_\beta}^{H_\beta^+}\Psi_{\beta,\mathcal C_\beta}.
\]
In the non-self-associate case we set
\[
\rho_{\beta,\mathcal C_\beta}^\pm
=
\Ind_{K_\beta}^{H_\beta^+}\Psi_{\beta,\mathcal C_\beta}^\pm.
\]
\end{notation}

\begin{notation}
\label{not:admissible-beta-tuples}
A tuple
\[
\mathcal C_\beta
=
\bigl(
\lambda_\beta^{(0)},\lambda_\beta^{(1)},\ldots,
\lambda_\beta^{(e_\beta)}
\bigr)
\]
is called an \emph{admissible $\beta$-tuple} if $\lambda_\beta^{(0)}$ is a
bar partition, $\lambda_\beta^{(j)}$ is an ordinary partition for $1\leq
j\leq e_\beta$, and
\[
\sum_{j=0}^{e_\beta}|\lambda_\beta^{(j)}|=n_\beta.
\]
We denote by $\mathfrak C_\beta$ the set of all admissible
$\beta$-tuples.
\end{notation}

\begin{theorem}
\label{thm:param-level}
The map which associates with $ \mathcal C_\beta\in\mathfrak C_\beta $ the
association class represented by $\rho_{\beta,\mathcal C_\beta}$, or by
$\rho_{\beta,\mathcal C_\beta}^\pm$ in the non-self-associate case, is a
bijection between $\mathfrak C_\beta$ and the association classes of
irreducible spin representations of $H_\beta^+$. Moreover, the class
corresponding to $\mathcal C_\beta$ is self-associate if and only if
\[
n_\beta-\ell(\lambda_\beta^{(0)})\equiv0\mod2.
\]
\end{theorem}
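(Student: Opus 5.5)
The plan is to run Clifford theory with respect to $M_\beta^+\triangleleft H_\beta^+$, using Lemma~\ref{lem:intermediate-clifford} and Lemma~\ref{lem:type-induction} to move everything to the intermediate subgroup $K_\beta$. I will prove well-definedness, then surjectivity, then injectivity, and read off the association type at the end.

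\textbf{Surjectivity.} Every irreducible spin representation $\chi$ of $H_\beta^+$ lies over some irreducible spin representation of the base $M_\beta^+=(N_\beta^+)^{\widehat\times n_\beta}$. By the Humphreys description (\cite[Proposition~1.2]{MichlerOlsson}), such a representation is a twisted tensor product of elementary spin representations of $N_\beta^+$, up to association. Conjugating by elements of $H_\beta^+$ permutes the factors and, by \cite[Lemma~3.11]{MichlerOlsson}, replaces factors by associates. Hence, up to association and $H_\beta^+$-conjugacy, one may assume that $\chi$ (or $\varepsilon\chi$) lies over $\theta_{\mathbf n_\beta}$ for a unique profile $\mathbf n_\beta$, namely the multiplicities of the association classes $\rho_\beta^{(0)},\ldots,\rho_\beta^{(e_\beta)}$.

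Since $T_\beta^+\le K_\beta\le H_\beta^+$, Lemma~\ref{lem:intermediate-clifford} gives that induction is a bijection $\Irr(K_\beta\mid\theta_{\mathbf n_\beta})\to\Irr(H_\beta^+\mid\theta_{\mathbf n_\beta})$. It therefore suffices to show that the spin representations of $K_\beta\simeq\widehat\prod_j(N_\beta\wr\sym_{n_\beta^{(j)}})^+$ lying over $\theta_{\mathbf n_\beta}$ are, up to association, exactly the $\Psi_{\beta,\mathcal C_\beta}$ (or $\Psi^\pm_{\beta,\mathcal C_\beta}$).

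\textbf{The factors of $K_\beta$.} The irreducible spin representations of a Humphreys product are, up to association, the twisted tensor products of irreducible spin representations of its factors. This reduces the problem to each homogeneous factor separately.
\begin{itemize}
\item For $j\ge1$: the representation $\widetilde E_\rho$ extends $E_\rho$, so Gallagher's theorem shows that $E_{\rho,\lambda}$, for $\lambda\vdash n_\beta^{(j)}$, exhausts the representations over $E_\rho$ without repetition.
\item For $j=0$: Corollary~\ref{cor:paramSA} gives the parametrization by $\mathcal D_{n_\beta^{(0)}}$.
\end{itemize}
I still have to check that the representation of the $j$-th base group lying under $\theta_{\mathbf n_\beta}$ is the one used in each construction. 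It is $E_\rho$, respectively $E$, up to association, because both are irreducible extensions of the homogeneous tensor product. Irreducibility of $E_\rho$ and $E$ follows from counting dimensions against Humphreys' rule, in the style of Michler--Olsson.

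\textbf{Injectivity.} Distinct admissible tuples give distinct classes. If the profiles $\mathbf n_\beta$ differ, the base constituents lie in different $H_\beta^+$-orbits, even after allowing association. If the profiles agree, injectivity of the Clifford correspondence and of the parametrizations of the factors gives the result.

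\textbf{Association type.} The type of the class of $\Psi$ is given by Lemma~\ref{lem:typeQi}, and Lemma~\ref{lem:type-induction} shows that induction to $H_\beta^+$ preserves it. This yields the criterion $n_\beta-\ell(\lambda_\beta^{(0)})\equiv0\bmod 2$. In the non-self-associate case, the two inductions $\rho^\pm_{\beta,\mathcal C_\beta}$ are distinct and associate, so each class is counted once.

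The main obstacle is the surjectivity bookkeeping up to association. Conjugation by some $t_j$ twists the non-zero components of the base representation by $\varepsilon$. One must show that the choice of representatives $\rho_\beta^{(j)}$ of each association class, together with the choice of a single $\theta_{\mathbf n_\beta}$ per profile, still catches every class exactly once; the difficulty is that $\theta_{\mathbf n_\beta}$ and $\varepsilon_M\theta_{\mathbf n_\beta}$ may or may not be $H_\beta^+$-conjugate. To handle this, I would use \cite[Propositions~3.12 and~3.13]{MichlerOlsson} to describe the $H_\beta^+$-orbits on base representations. As a final check, I would compare the total count against the number of spin characters of $H_\beta^+$, that is, the number of spin classes obtained from the split-class description.
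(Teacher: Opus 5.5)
Your proposal is correct and follows essentially the same route as the paper: the Clifford correspondence through the intermediate subgroup $K_\beta$ (Lemma~\ref{lem:intermediate-clifford}), the parametrization of the homogeneous factors of $K_\beta$ by Gallagher's theorem and Corollary~\ref{cor:paramSA}, and the association type read off from Lemmas~\ref{lem:typeQi} and~\ref{lem:type-induction}. The bookkeeping you flag as the main obstacle is exactly what the case analysis in the proof of Lemma~\ref{lem:type-induction} settles: when $\theta_{\mathbf n_\beta}$ is not self-associate, either $n_\beta^{(0)}\geq2$ and a lift $t_j$ inside the zero block, which lies in $K_\beta$, conjugates it to $\varepsilon_M\theta_{\mathbf n_\beta}$, or $n_\beta^{(0)}\leq1$ and \cite[Proposition~3.12]{MichlerOlsson} puts the two in distinct $H_\beta^+$-orbits; so your plan is a more detailed version of the paper's two-line argument.
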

\begin{proof}
By the preceding construction and the Clifford correspondence of
Lemma~\ref{lem:intermediate-clifford}, the irreducible spin representations
of $H_\beta^+$ are parametrized, up to association, by the admissible tuples
$\mathcal C_\beta\in\mathfrak C_\beta$. The statement on association type
follows from Lemmas~\ref{lem:typeQi} and~\ref{lem:type-induction}.
\end{proof}

\section{Galois action on the local parametrization}
\label{sec:galois-local}

\subsection{Strategy}

In this section, we keep the notation of \S\ref{subsec:indice2}. Thus,
$H\triangleleft G$ is a normal subgroup of index $2$, and $
\varepsilon:G\longrightarrow\{\pm1\} $ denotes the non-trivial linear
character of $G/H$ with kernel $H$. The association classes in $\Irr(G)$
are labeled by a set $\Lambda$. Let $\lambda\in\Lambda$, and let $\tau$ be
a Galois automorphism stabilizing the association class labelled by
$\lambda$. We describe a general strategy for reading the action of $\tau$
from a suitable intertwining operator. 
\smallskip

We denote by $\rho_\lambda$, or by $\rho_\lambda^+$ and
$\rho_\lambda^-$, representations affording the corresponding characters
$\chi_\lambda$, or $\chi_\lambda^+$ and $\chi_\lambda^-$, respectively.
Thus $\rho_\lambda$ is a representation of $G$ in the self-associate
case and of $H$ in the non-self-associate case.

Since $\tau$ stabilizes the association class labelled by $\lambda$, we
have $ \rho_\lambda^\tau\simeq\rho_\lambda $. We choose an intertwiner
$
B:\rho_\lambda^\tau\longrightarrow\rho_\lambda,
$
that is 
$$
B\rho_\lambda^\tau B^{-1}=\rho_\lambda.
$$

\begin{enumerate}[(i)]
\item Suppose that $\lambda$ labels an orbit of size $1$. We choose an
associator $J$ for $\rho_\lambda$ such that, for all $g\in G$,
\[
J\rho_\lambda(g)J^{-1}
=
\varepsilon(g)\rho_\lambda(g)
\quad\text{and}\quad
J^2=I.
\]

\item Suppose that $\lambda$ labels an orbit of size $2$. We realize the
two extensions $\rho_\lambda^\pm$ on the space of $\rho_\lambda$, with $
\rho_\lambda^-=\varepsilon\rho_\lambda^+ $. Choose $x\in G\setminus H$ and
set
\[
A=\rho_\lambda^+(x).
\]
\end{enumerate}

\begin{lemma}
\label{lem:galois-index-two}
With the notation above, there exists
\[
c_\tau(\lambda)\in\{\pm1\}
\]
such that  
$
BJ^\tau B^{-1}
=
c_\tau(\lambda)J
$
when the orbit has size $1$, and 
$
BA^\tau B^{-1}
=
c_\tau(\lambda)A
$
otherwise.
In either case, for $\eta\in\{\pm1\}$,
\[
(\rho_\lambda^\eta)^\tau
\simeq
\rho_\lambda^{\,c_\tau(\lambda)\eta}.
\]
\end{lemma}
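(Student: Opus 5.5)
The plan is a direct computation in the two cases, using Schur's lemma to produce the sign and then reading off the Galois action on characters.

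\emph{Self-associate case.} Apply $\tau$ entrywise to the identity $J\rho_\lambda(g)J^{-1}=\varepsilon(g)\rho_\lambda(g)$. Since $\varepsilon(g)=\pm1$ is rational, this gives $J^\tau\rho_\lambda^\tau(g)(J^\tau)^{-1}=\varepsilon(g)\rho_\lambda^\tau(g)$. Conjugating by $B$ then yields
\[
(BJ^\tau B^{-1})\rho_\lambda(g)(BJ^\tau B^{-1})^{-1}=\varepsilon(g)\rho_\lambda(g).
\]
So $BJ^\tau B^{-1}$ is another associator for the irreducible representation $\rho_\lambda$. Hence $(BJ^\tau B^{-1})J^{-1}$ commutes with $\rho_\lambda(G)$, and by Schur's lemma $BJ^\tau B^{-1}=cJ$ for some scalar $c$. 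Squaring gives $c^2J^2=B(J^2)^\tau B^{-1}=I$, so $c=\pm1$.

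To identify the constituents, note that $\rho_\lambda^\pm$ is $\Res_H\rho_\lambda$ on the $\pm1$-eigenspace of $J$. Then $(\rho_\lambda^\eta)^\tau$ is $\Res_H\rho_\lambda^\tau$ on the $\eta$-eigenspace of $J^\tau$, using $\tau(\pm1)=\pm1$. Transporting by $B$, this space is the $\eta$-eigenspace of $BJ^\tau B^{-1}=cJ$, which is the $c\eta$-eigenspace of $J$. Since $B$ intertwines, it follows that $(\rho_\lambda^\eta)^\tau\simeq\rho_\lambda^{c\eta}$.

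\emph{Non-self-associate case.} Here $\rho_\lambda$ is an irreducible representation of $H$. The conjugate $\rho_\lambda^+(x)$ of $\rho_\lambda(h)$ by $A$ equals $\rho_\lambda(xhx^{-1})$; applying $\tau$ gives $A^\tau\rho_\lambda^\tau(h)(A^\tau)^{-1}=\rho_\lambda^\tau(xhx^{-1})$. Conjugating by $B$ shows that $BA^\tau B^{-1}$ and $A$ both intertwine $\rho_\lambda$ with $\rho_\lambda^x$. By Schur's lemma, $BA^\tau B^{-1}=cA$ for a scalar $c$. To see $c=\pm1$, use $x^2\in H$: squaring gives $c^2A^2=B(A^2)^\tau B^{-1}=B\rho_\lambda^\tau(x^2)B^{-1}=\rho_\lambda(x^2)=A^2$. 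Since $A$ is invertible, $c^2=1$.

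For the characters, $G=H\cup xH$, and on the coset $x$ we have $\rho^+_\lambda(xh)=A\rho_\lambda(h)$. Hence $B(\rho_\lambda^+)^\tau B^{-1}$ agrees with $\rho_\lambda$ on $H$ and sends $x$ to $cA$. This is exactly $\rho_\lambda^+$ if $c=1$ and $\varepsilon\rho_\lambda^+=\rho_\lambda^-$ if $c=-1$. Tensoring with $\varepsilon$, which is fixed by $\tau$, gives the statement for $\eta=-1$.

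The only delicate point is confirming that $c$ does not depend on the choice of $B$. This holds because $B$ is unique up to a scalar, again by irreducibility, and the scalar cancels in $BJ^\tau B^{-1}$ and $BA^\tau B^{-1}$. Dependence on the choice of $x$ or of $J$ is harmless: the conclusion about $(\rho_\lambda^\eta)^\tau$ pins down $c$ intrinsically.
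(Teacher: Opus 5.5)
Your proposal is correct and follows the paper's proof essentially step for step: Schur's lemma applied to $BJ^\tau B^{-1}$ (resp.\ $BA^\tau B^{-1}$), squaring to get $c^2=1$, and then either transporting the eigenspaces of $J^\tau$ by $B$ or reading off the value at $x$ to identify $(\rho_\lambda^\eta)^\tau$. Your closing remark, that $c$ does not depend on the choices of $B$, $x$ and $J$, is a correct addition that the paper does not make explicit.
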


\begin{proof}
Suppose first that $\lambda$ labels an orbit of size $1$. Then
$BJ^\tau B^{-1}$ is again an associator for $\rho_\lambda$. Hence, by
Schur's lemma,
$
BJ^\tau B^{-1}=cJ
$
for some $c\in\mathbb C^\times$. Since $J^2=I$ and $(J^\tau)^2=I$, we
obtain $c^2=1$.
Let $\eta\in\{\pm1\}$, and let $V_\tau^\eta$ denote the $\eta$-eigenspace
of $J^\tau$. Then for $v\in V_\tau^\eta$, we have
\[
J(Bv)
=
cBJ^\tau v
=
c\eta\,Bv.
\]
It follows that
\[
B(V_\tau^\eta)=V^{c\eta},
\]
where $V^{c\eta}$ is the $(c\eta)$-eigenspace of $J$. Since these
eigenspaces afford the two irreducible constituents of the restriction
to $H$, we deduce that
\[
(\rho_\lambda^\eta)^\tau
\simeq
\rho_\lambda^{c\eta}.
\]

Suppose now that $\lambda$ labels an orbit of size $2$. 
For $h\in H$, we have
\[
A\rho_\lambda(h)A^{-1}
=
\rho_\lambda(xhx^{-1}).
\]
Hence, $BA^\tau B^{-1}$ and $A$ satisfy the same intertwining relation,
and Schur's lemma gives
$
BA^\tau B^{-1}=cA
$
for some $c\in\mathbb C^\times$. Now,
$
A^2=\rho_\lambda(x^2)$, 
and 
\[
(BA^\tau B^{-1})^2
=
B\rho_\lambda^\tau(x^2)B^{-1}
=
\rho_\lambda(x^2)
=
A^2.
\]
We again obtain $c^2=1$ since $A$ is invertible.
Finally, for $\eta\in\{\pm1\}$ we have
\[
\rho_\lambda^\eta(h)=\rho_\lambda(h)
\quad\text{for }h\in H\quad\text{and}\quad
\rho_\lambda^\eta(x)=\eta A.
\]
Hence $B$ intertwines
$(\rho_\lambda^\eta)^\tau$ with $\rho_\lambda^{c\eta}$, and the result
follows.
\end{proof}

\begin{notation}
\label{def:galois-bit}
With the notation of Lemma~\ref{lem:galois-index-two}, for
$\lambda\in\Lambda$, define $ e_\tau(\lambda)\in\mathbb Z/2\mathbb Z $ by
\[
c_\tau(\lambda)=(-1)^{e_\tau(\lambda)}.
\]
In particular, $e_\tau(\lambda)=0$ if $\tau$ fixes the two representations
individually, and $e_\tau(\lambda)=1$ if it exchanges them.

If $V\in\Irr(G)$ belongs to the association class labelled by
$\lambda$, we also write
\[
e_\tau(V)=e_\tau(\lambda).
\]
\end{notation}

\subsection{Propagation through Humphreys products}

\begin{lemma}
\label{lem:BN-propagation}
Let $V_i$ be irreducible spin representations entering a finite Humphreys
product, such that their association classes are stable under $\tau$, and
assume that $\tau(i)=i$. Then, with the notation~\ref{def:galois-bit}, we
have
\[
e_\tau(V_1\widehat\otimes\cdots\widehat\otimes V_r)
\equiv
\sum_{k=1}^r e_\tau(V_k)
\mod2.
\]
\end{lemma}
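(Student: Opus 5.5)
By induction on $r$ using associativity of the Humphreys product, it suffices to treat $r=2$. So let $V_1,V_2$ be irreducible spin representations of $G_1,G_2$, with index-two subgroups $H_i=\ker\varepsilon_i$. Let $G=G_1\widehat\times G_2$ and $V=V_1\widehat\otimes V_2$. We use the explicit model of the Humphreys product from \cite[Proposition~1.2]{MichlerOlsson} and split into three cases according to the association types of $V_1$ and $V_2$. In each case we first write down an associator $J$, or an operator $A=\rho^+(x)$ for some $x\in G\setminus H$, for $V$ in terms of the corresponding data for $V_1$ and $V_2$. We then take $B=B_1\otimes B_2$ (possibly twisted by the Clifford-type matrices of the model) as the intertwiner $V^\tau\to V$. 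Finally we compute the scalar $c_\tau$ from Lemma~\ref{lem:galois-index-two} directly.

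\emph{Both $V_i$ self-associate.} Here $V=V_1\otimes V_2$ with $(x_1,x_2)\mapsto \rho_1(x_1)J_1^{\partial(x_2)}\otimes\rho_2(x_2)$, the same twist as in \eqref{eq:defESA}, and $V$ is non-self-associate. Take $x\in G_1\setminus H_1$. Then $A=\rho_1(x)\otimes J_2$, up to a scalar fixed by requiring $A^2=\rho(x^2)$. With $B=B_1\otimes B_2$ we get
\[
BA^\tau B^{-1}=B_1\rho_1(x)^\tau B_1^{-1}\otimes B_2J_2^\tau B_2^{-1}=c_\tau(V_2)A .
\]
This gives $c_\tau(V_2)$, not $c_\tau(V_1)c_\tau(V_2)$. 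So we have to be careful: $A$ must be the extension normalized consistently. A cleaner choice is $x=(x_1,x_2)$ with both components odd, for which $A$ involves $J_1$ and $J_2$ symmetrically. The correct statement is that the labelling of $\rho^\pm$ is by the eigenvalue of the product associator $J_1\otimes J_2$ acting on $H$-invariant data. I expect to fix the sign convention so that $\rho^+$ restricted to $H$ matches $J_1\otimes J_2$, which yields $c_1c_2$.

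\emph{One self-associate, one not.} Say $V_1$ is self-associate and $V_2$ is non-self-associate, with $V_2$ realized via $\rho_2^+$ and $A_2$. Then $V=V_1\otimes V_2$ is non-self-associate and $A=J_1\otimes A_2$ (or $\rho_1(x)\otimes A_2$), and conjugating by $B$ gives $c_1c_2$ once the identities $B_iJ_i^\tau B_i^{-1}=c_iJ_i$ and $B_2A_2^\tau B_2^{-1}=c_2A_2$ are inserted.

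\emph{Both non-self-associate.} Then $V=V_1\otimes V_2\otimes\mathbb C^2$, using Clifford matrices $F_1,F_2$ as in the non-self-associate homogeneous construction, and it is self-associate with associator $J=I\otimes I\otimes cF_2F_1$. To compare $J$ and $J^\tau$ we exploit $A_1\otimes A_2$. The operator $(A_1\otimes A_2)\otimes F_1F_2$ commutes with the whole representation up to the association twist, so $J$ may be taken proportional to it, normalized to square $I$. The intertwiner $B_1\otimes B_2\otimes I$ then gives $c_1c_2$. The scalar $c$ lies in $\{\pm1,\pm i\}$ and could be moved by $\tau$; this is harmless because $\tau$ fixes $i$ ($p$ odd) and so acts trivially on it.

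The main obstacle is the normalization in the first case. The scalar making $A^2=\rho(x^2)$ may involve $\sqrt{-1}$ or entries of $\rho(x^2)$, and we must check that the identification of $\pm$ signs in the Humphreys product is the one in \cite[Proposition~1.2]{MichlerOlsson}. The first thing I would try is to show that the associate pair of $V$ is labelled by the $J_1\otimes J_2$-eigenspace decomposition of $V_1\otimes V_2$ restricted to $\ker\varepsilon$. After that, the result reduces to the observation that $B_1\otimes B_2$ sends the $\eta$-eigenspace of $J_1^\tau\otimes J_2^\tau$ to the $c_1c_2\eta$-eigenspace of $J_1\otimes J_2$.
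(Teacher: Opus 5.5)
Your proposal has a genuine gap, and the same error recurs in every case. The Humphreys-product model contains twisting operators ($J_1^{\partial(x_2)}$, resp.\ $F_1^{\partial(x_1)}F_2^{\partial(x_2)}$), so $B_1\otimes B_2$ (resp.\ $B_1\otimes B_2\otimes I$) is in general \emph{not} an intertwiner $V^\tau\to V$. In addition, your first case is set up wrongly: a Humphreys product of two self-associate representations is self-associate, not non-self-associate. In your own model $\rho(x_1,x_2)=\rho_1(x_1)J_1^{\partial(x_2)}\otimes\rho_2(x_2)$ one has $(J_1\otimes J_2)\rho(g)(J_1\otimes J_2)^{-1}=\varepsilon(g)\rho(g)$; this is $J_E$ of Corollary~\ref{cor:assogen} and matches Lemma~\ref{lem:typeQi}. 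Also $\rho(x,1)=\rho_1(x)\otimes I$, not $\rho_1(x)\otimes J_2$. The mismatch $c_\tau(V_2)$ versus $c_1c_2$ cannot be removed by a sign convention, because $c_\tau$ is intrinsic: it records whether $\tau$ fixes or swaps $\chi^\pm$. Moreover, $B_1\otimes B_2$ conjugates $\rho(x_1,x_2)^\tau$ to $c_1^{\partial(x_2)}\rho(x_1,x_2)$. Writing $c_k=(-1)^{e_k}$, the correct intertwiner is $\mathcal B=(I\otimes J_2^{e_1})(B_1\otimes B_2)$, the analogue of $C_{\beta,r}$ in Lemma~\ref{lem:galois-E}. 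The correction commutes with $J_1\otimes J_2$, so $\mathcal B(J_1\otimes J_2)^\tau\mathcal B^{-1}=c_1c_2\,J_1\otimes J_2$; this is your closing eigenspace remark made rigorous. In the mixed case, with model $\rho^+(x_1,x_2)=\rho_1(x_1)J_1^{\partial(x_2)}\otimes\rho_2^+(x_2)$, the uncorrected operator is off by $(c_1c_2)^{\partial(x_1)}$ on $H$ and must be multiplied by $J_1^{e_1+e_2}\otimes I$. With $A=J_1\otimes A_2$ your answer is right only because this correction commutes with $A$. With the equally valid choice $A=\rho_1(x_1)\otimes I$ ($x_1$ odd), your $B$ would return $+1$. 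Your alternative $\rho_1(x)\otimes A_2$ is not $\rho^+(g)$ for any $g\in G\setminus H$.

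In the non-self-associate/non-self-associate case both key steps fail. The operator $A_1\otimes A_2\otimes F_1F_2$ equals $\rho(y_1,y_2)$ for odd $y_1,y_2$, i.e.\ the image of an element $y\in H$. It conjugates $\rho(g)$ to $\rho(ygy^{-1})$, not to $\varepsilon(g)\rho(g)$, so it is not an associator. The true associator is $J=I\otimes I\otimes cF_2F_1$ with $c=\pm i$, and it is $\tau$-fixed. And $B_1\otimes B_2\otimes I$ conjugates $\rho(x_1,x_2)^\tau$ to $c_1^{\partial(x_1)}c_2^{\partial(x_2)}\rho(x_1,x_2)$, so it is not an intertwiner; conjugating the true $J$ by it always gives $+1$. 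Your two errors cancel to produce the right answer. The correct intertwiner is $(I\otimes I\otimes M)(B_1\otimes B_2\otimes I)$, with $M\in\{I,F_1,F_2,F_1F_2\}$ chosen so that $MF_kM^{-1}=c_kF_k$. The whole sign then comes from $M(F_2F_1)M^{-1}=c_1c_2F_2F_1$. With these corrections your operator-theoretic route does prove the lemma. The paper avoids all such normalizations by using the character formulas of \cite[Section~5]{BrNaMZ}: $\tau$ multiplies the difference character $\Delta_V$ by $c_\tau(V)$, and $\Delta_V$ factors as the product of the corresponding data of the factors (with an extra $\tau$-fixed factor $2i$ in the NSA/NSA case). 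Hence the signs simply multiply.
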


\begin{proof}
This follows directly from the character formulas for Humphreys products
in \cite[Section~5, formulas~(27)--(35)]{BrNaMZ}. In the SA/SA and mixed
cases, these formulas show that the relevant character or character
difference factors as the product of the corresponding data of the two
factors, so that the Galois signs multiply. In the NSA/NSA case,
formula~(35) contains the additional factor $2i$. Since $\tau(i)=i$,
this factor contributes no extra sign. The binary formula follows, and
the general case follows by iteration.
\end{proof}

\subsection{Homogeneous factors}

Let $\beta\geq 1$ and $r\geq 1$. Let $\tau=\tau_p$ be a generator of
$G(p)$. 

\subsubsection{The self-associate elementary factor}

\begin{lemma}
\label{lem:galois-E}
Let  $E$ be the representation of $(N_\beta^+)^{\widehat{\times} r}$
defined in~\eqref{eq:defESA}. There exists an intertwiner $
B_\beta:R_\beta^\tau\longrightarrow R_\beta $
such that
\begin{equation}
\label{eq:conjassD}
B_\beta J_\beta^\tau B_\beta^{-1}
=
(-1)^\beta J_\beta.
\end{equation}
Moreover, setting
\[
C_{\beta,r}
=
\prod_{k=1}^r
\bigl(J_\beta^{(k)}\bigr)^{\beta(k-1)}
\quad\text{and}\quad
\mathcal B_{\beta,r}
=
C_{\beta,r}B_\beta^{\otimes r},
\]
one has, for every $u\in(N_\beta^+)^{\widehat{\times} r}$
\begin{equation}
\label{eq:interbase}
\mathcal B_{\beta,r}E(u)^\tau
\mathcal B_{\beta,r}^{-1}
=
E(u).
\end{equation}
\end{lemma}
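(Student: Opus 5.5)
We first construct $B_\beta$ at the level of a single factor $G_p$ and then take tensor powers. In the induced model of $\varphi=\Ind_{\langle b\rangle}^{G_p}\beta$ with basis $(e_u)_{u\in\mathbb F_p^\times}$, the matrices $\varphi(c)$ are permutation matrices. The diagonal matrix $\varphi(b)$ has entries $\zeta_p^u$, so $\varphi(b)^\tau$ has entries $\zeta_p^{ru}$, where $\tau(\zeta_p)=\zeta_p^r$ and $r$ generates $\mathbb F_p^\times$.

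We then take $B$ to be the permutation operator $B e_u=e_{ru}$. This gives $B\varphi^\tau(b)B^{-1}=\varphi(b)$. Since multiplication by $r$ commutes with multiplication by $a^{-1}$, it also gives $B\varphi(c)B^{-1}=\varphi(c)$. Hence $B$ intertwines $\varphi^\tau$ and $\varphi$.

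The operator $J$ has entries $\eta(u)=\pm1$, so $J^\tau=J$. We compute
\[
BJB^{-1}e_{ru}=\eta(u)e_{ru}=\eta(r)^{-1}\eta(ru)e_{ru}.
\]
Because $r$ is a generator, it is a non-square, so $\eta(r)=-1$ and $BJ^\tau B^{-1}=-J$.

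For $R_\beta(x)=\Lambda_\beta(x)\varphi_\beta(\bar x)$, the linear spin character $\Lambda_\beta$ also gets twisted by $\tau$. It takes values in roots of unity, possibly involving $i$ and $p$-power roots of unity, so we will check that $\Lambda_\beta^\tau=\Lambda_\beta\mu$ for a linear character $\mu$ of $N_\beta$. Such a $\mu$ can be absorbed by adjusting $B_\beta$ through a diagonal twist; the cleanest case is $\Lambda_\beta^\tau=\Lambda_\beta$. We expect to show this by choosing $\Lambda_\beta$ with values in $\langle i\rangle$, since $p$-elements lie in the kernel of any linear character of the quotient by $X_\beta'$ modulo sign issues. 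Setting $B_\beta=B^{\otimes\beta}$ then gives an intertwiner $R_\beta^\tau\to R_\beta$ with $B_\beta J_\beta^\tau B_\beta^{-1}=(-B J B^{-1}\cdot(-1))^{\otimes}$, that is, $(-1)^\beta J_\beta$, which is \eqref{eq:conjassD}. The main point to verify carefully here is $\Lambda_\beta^\tau=\Lambda_\beta$. If that fails, we would instead redefine $B$ by a scalar diagonal correction on the $c$-coordinates, which commutes with $J$ and so does not affect the sign.

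For \eqref{eq:interbase}, it suffices to check generators $u=x^{(j)}$. We have
\[
E(x^{(j)})^\tau=\bigl(J_\beta^{(1)}\cdots J_\beta^{(j-1)}\bigr)^{\partial(x)}\bigl(R_\beta(x)^\tau\bigr)^{(j)},
\]
since $J_\beta^\tau=J_\beta$. Conjugating by $B_\beta^{\otimes r}$ sends this to $(-1)^{\beta(j-1)\partial(x)}E(x^{(j)})$. Next, $C_{\beta,r}$ commutes with all $J_\beta^{(k)}$. For the factor $R_\beta(x)^{(j)}$, conjugation by $(J_\beta^{(j)})^{\beta(j-1)}$ gives $\varepsilon(x)^{\beta(j-1)}R_\beta(x)^{(j)}$, and the other factors of $C_{\beta,r}$ commute with it. This produces exactly the compensating sign $(-1)^{\beta(j-1)\partial(x)}$, which yields \eqref{eq:interbase}. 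This final bookkeeping is routine; the main obstacle is the control of $\Lambda_\beta^\tau$ in the first step.
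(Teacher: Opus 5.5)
Your proposal is correct and follows the paper's proof essentially step by step: the permutation intertwiner $e_u\mapsto e_{\kappa u}$ on the induced model of $\varphi$, the identity $\eta(\kappa)=-1$ giving $BJ^\tau B^{-1}=-J$ and hence $(-1)^\beta J_\beta$ after tensoring, and the check on generators $x^{(j)}$, where $B_\beta^{\otimes r}$ produces a sign $(-1)^{\beta(j-1)\partial(x)}$ that conjugation by $C_{\beta,r}$ cancels. The one point you hedge on, $\Lambda_\beta^\tau=\Lambda_\beta$, is simply asserted in the paper, and it needs neither values in $\langle i\rangle$ nor your diagonal fallback: $[N_\beta^+,N_\beta^+]$ maps onto $[G_p^\beta,G_p^\beta]=\mathbb F_p^\beta$ and $z$ has order $2$, so the derived subgroup contains the normal Sylow $p$-subgroup, hence every linear character of $N_\beta^+$ takes values in roots of unity of order prime to $p$, which $\tau\in G(p)$ fixes.
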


\begin{proof}
Recall the explicit realization of the unique self-associate spin
representation $\varphi$ of $G_p$, with basis
$(e_u)_{u\in\mathbb F_p^\times}$ and associator $J$ defined
in~\eqref{eq:defassbase}. Let $\kappa\in\mathbb F_p^\times$ be defined by
\[
\tau(\zeta_p)=\zeta_p^\kappa,
\]
and define
\[
Be_u=e_{\kappa u}.
\]
By the explicit realization of $\varphi$, the operator $B$ intertwines
$\varphi^\tau$ and $\varphi$, so that
\[
B\varphi^\tau B^{-1}=\varphi.
\]
Since the restriction of $\tau$ to $\Q_{p(p-1)}$ generates
\[
\Gal(\Q_{p(p-1)}/\Q_{p-1})
\simeq\mathbb F_p^\times,
\]
the element $\kappa$ generates $\mathbb F_p^\times$. Hence
$\eta(\kappa)=-1$.
Since the values of $\eta$ belong to $\{\pm1\}$, we have $J^\tau=J$.
Moreover, for $u\in\mathbb F_p^\times$, we have
\[
BJ^\tau B^{-1}e_u
=
BJ e_{\kappa^{-1}u}
=
\eta(\kappa^{-1}u)e_u
=
\eta(\kappa)\eta(u)e_u
=
-Je_u.
\]
Hence,
$
BJ^\tau B^{-1}=-J$.
Now, recall that
$
R_\beta=\Lambda_\beta\otimes\varphi^{\boxtimes\beta}$
and 
$J_\beta=J^{\otimes\beta}$. Furthermore, 
$\Lambda_\beta$ is fixed by $\tau$. Thus, with
$
B_\beta=B^{\otimes\beta}$,
we obtain
\[
B_\beta R_\beta^\tau(x)B_\beta^{-1}
=
R_\beta(x)
\quad
\text{for }x\in N_\beta^+,
\]
and
\[
B_\beta J_\beta^\tau B_\beta^{-1}
=
\bigotimes_{k=1}^\beta
\bigl(BJ^\tau B^{-1}\bigr)
=
(-1)^\beta J_\beta.
\]

It remains to construct an intertwiner for $E$. It is enough to check
the required identity on the generators $x^{(j)}$, with
$x\in N_\beta^+$ and $1\leq j\leq r$. Put $d=\partial(x)$. By
\eqref{eq:defESA} and \eqref{eq:conjassD}, we obtain
\[
B_\beta^{\otimes r}
E(x^{(j)})^\tau
(B_\beta^{-1})^{\otimes r}
=
(-1)^{\beta(j-1)d}E(x^{(j)}).
\]
On the other hand, all the factors of $C_{\beta,r}$ commute with
$E(x^{(j)})$, except possibly the factor in position $j$. Since
\[
J_\beta^{(j)}
R_\beta(x)^{(j)}
\bigl(J_\beta^{(j)}\bigr)^{-1}
=
(-1)^dR_\beta(x)^{(j)},
\]
we get
\[
C_{\beta,r}E(x^{(j)})C_{\beta,r}^{-1}
=
(-1)^{\beta(j-1)d}E(x^{(j)}).
\]
The two signs therefore cancel, and
\[
\mathcal B_{\beta,r}
E(x^{(j)})^\tau
\mathcal B_{\beta,r}^{-1}
=
E(x^{(j)}).
\]
This proves the result.
\end{proof}

\begin{lemma}
\label{lem:galois-A}
Assume that $r\geq2$. For every $1\leq j<r$, one has
\[
\mathcal B_{\beta,r}\mathcal A_j^\tau
\mathcal B_{\beta,r}^{-1}
=
(-1)^{\beta(r-1)}\mathcal A_j.
\]
Moreover, for every $\sigma\in\sym_r$,
\begin{equation}
\label{eq:intergal}
\mathcal B_{\beta,r}\mathcal A_\sigma^\tau
\mathcal B_{\beta,r}^{-1}
=
\sgn(\sigma)^{\beta(r-1)}\mathcal A_\sigma.
\end{equation}
\end{lemma}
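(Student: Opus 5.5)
The plan is to compute the conjugate of each factor of $\mathcal A_j=J_{\beta,j}^{\perp}P_jQ_j$ separately, first under $B_\beta^{\otimes r}$ and then under $C_{\beta,r}$.

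\emph{Step 1: $\tau$ acts trivially on the entries.} By \eqref{eq:defassbase}, $J$ is diagonal with entries $\eta(u)\in\{\pm1\}$, so $J^\tau=J$ and $J_\beta^\tau=J_\beta$. The operators $P_j$ are permutation matrices, and $Q_j$ is given by \eqref{eq:defQ} with rational coefficients in the $J_\beta^{(k)}$. Hence $\mathcal A_j^\tau=\mathcal A_j$, and it suffices to compute $\mathcal B_{\beta,r}\mathcal A_j\mathcal B_{\beta,r}^{-1}$.

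\emph{Step 2: conjugation by $B_\beta^{\otimes r}$.}
\begin{itemize}
\item $B_\beta^{\otimes r}$ commutes with $P_j$, because it is a tensor power of a single operator.
\item By \eqref{eq:conjassD}, applied factorwise with $J_\beta^\tau=J_\beta$, conjugation sends $J_\beta^{(k)}$ to $(-1)^\beta J_\beta^{(k)}$.
\item Consequently $J_{\beta,j}^{\perp}$, a product of $r-2$ such factors, is multiplied by $(-1)^{\beta(r-2)}$.
\item $Q_j$ is sent to $\frac12\bigl(I+(-1)^\beta J_\beta^{(j)}+(-1)^\beta J_\beta^{(j+1)}-J_\beta^{(j)}J_\beta^{(j+1)}\bigr)$. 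For $\beta$ odd this equals $-J_\beta^{(j)}J_\beta^{(j+1)}Q_j$. To check this, evaluate both sides on the four joint eigenspaces of $(J_\beta^{(j)},J_\beta^{(j+1)})$. The signs of $Q_j$ there are $1,1,1,-1$ and those of the new operator are $-1,1,1,1$.
\end{itemize}
In both parities, therefore, $Q_j\mapsto \bigl(-J_\beta^{(j)}J_\beta^{(j+1)}\bigr)^\beta Q_j$. Since $J_\beta^{(j)}J_\beta^{(j+1)}$ commutes with $P_j$, with $Q_j$ and with $J_{\beta,j}^\perp$, we get
\[
B_\beta^{\otimes r}\mathcal A_j (B_\beta^{-1})^{\otimes r}=(-1)^{\beta(r-1)}\bigl(J_\beta^{(j)}J_\beta^{(j+1)}\bigr)^\beta\mathcal A_j .
\]

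\emph{Step 3: conjugation by $C_{\beta,r}$.} The operator $C_{\beta,r}$ commutes with every $J_\beta^{(k)}$, hence with $J_{\beta,j}^\perp$, with $Q_j$ and with $J_\beta^{(j)}J_\beta^{(j+1)}$. It remains to treat $P_j$. Conjugation by $P_j$ swaps the exponents $\beta(j-1)$ and $\beta j$ in positions $j,j+1$, so
\[
C_{\beta,r}P_jC_{\beta,r}^{-1}=C_{\beta,r}(P_jC_{\beta,r}^{-1}P_j)P_j .
\]
Using $J_\beta^2=I$, the right-hand side equals $\bigl(J_\beta^{(j)}J_\beta^{(j+1)}\bigr)^\beta P_j$. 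This factor cancels the one produced in Step 2, because $(J_\beta^{(j)}J_\beta^{(j+1)})^{2\beta}=I$. The result is exactly $(-1)^{\beta(r-1)}\mathcal A_j$, which is the first claim.

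\emph{Step 4: general $\sigma$.} Write $\sigma=s_{i_1}\cdots s_{i_\ell}$. Then $\mathcal A_\sigma=\mathcal A_{i_1}\cdots\mathcal A_{i_\ell}$ by \eqref{eq:relcoxAsigma}. Applying $\tau$ entrywise is multiplicative, and conjugation by $\mathcal B_{\beta,r}$ is a ring automorphism, so the sign obtained is $(-1)^{\beta(r-1)\ell}$. Since $(-1)^\ell=\sgn(\sigma)$, this gives \eqref{eq:intergal}.

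The main obstacle is the sign bookkeeping in Steps 2 and 3. One must check that the extra operator $(J_\beta^{(j)}J_\beta^{(j+1)})^\beta$ coming from the transformed $Q_j$ is exactly the one produced by moving $C_{\beta,r}$ past $P_j$. The eigenspace check on $Q_j$ is where an error would most likely occur.
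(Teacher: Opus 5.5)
Your proof is correct and follows the paper's argument essentially step for step. Conjugating $Q_j$ by $B_\beta^{\otimes r}$ produces the extra factor $(-1)^\beta\bigl(J_\beta^{(j)}J_\beta^{(j+1)}\bigr)^\beta$, which is cancelled by $C_{\beta,r}P_jC_{\beta,r}^{-1}=\bigl(J_\beta^{(j)}J_\beta^{(j+1)}\bigr)^\beta P_j$, and the case of general $\sigma$ then follows from \eqref{eq:relcoxAsigma}. Your explicit remark that $\mathcal A_j^\tau=\mathcal A_j$ and your eigenspace check of the transformed $Q_j$ simply spell out points the paper leaves implicit.
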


\begin{proof}
Set
$ B_0=B_\beta^{\otimes r}$ and recall that
\[
Q_j
=
\frac12\Bigl(
I+J_\beta^{(j)}+J_\beta^{(j+1)}
-J_\beta^{(j)}J_\beta^{(j+1)}
\Bigr).
\]
By~\eqref{eq:conjassD},
\[
\begin{aligned}
B_0Q_j^\tau B_0^{-1}
&=
\frac12\Bigl(
I+(-1)^\beta J_\beta^{(j)}
+(-1)^\beta J_\beta^{(j+1)}
-J_\beta^{(j)}J_\beta^{(j+1)}
\Bigr)\\
&=
(-1)^\beta
\bigl(J_\beta^{(j)}J_\beta^{(j+1)}\bigr)^\beta
Q_j.
\end{aligned}
\]
On the other hand, $P_j$ is fixed by $\tau$, and commutes with $B_0$ and
with $Q_j$.
Since $U_j=P_jQ_j$, it follows that
\begin{equation}
\label{eq:galois-Tj}
B_0U_j^\tau B_0^{-1}
=
(-1)^\beta
\bigl(J_\beta^{(j)}J_\beta^{(j+1)}\bigr)^\beta
U_j.
\end{equation}
Furthermore, Equation~\eqref{eq:conjassD} also yields
\[
B_0\bigl(J_{\beta,j}^{\perp}\bigr)^\tau B_0^{-1}
=
(-1)^{\beta(r-2)}J_{\beta,j}^{\perp}.
\]
Since $J_{\beta,j}^{\perp}$ commutes with $U_j$, we deduce from
\eqref{eq:galois-Tj} that
\begin{equation}
\label{eq:galois-A-before-C}
B_0\mathcal A_j^\tau B_0^{-1}
=
(-1)^{\beta(r-1)}
\bigl(J_\beta^{(j)}J_\beta^{(j+1)}\bigr)^\beta
\mathcal A_j.
\end{equation}

It remains to conjugate by $C_{\beta,r}$. Since $Q_j$ commutes with all
the $J_\beta^{(k)}$, it commutes with $C_{\beta,r}$. Moreover, $P_j$
exchanges the $j$th and $(j+1)$st tensor positions, then
\[
C_{\beta,r}P_jC_{\beta,r}^{-1}
=
\bigl(J_\beta^{(j)}J_\beta^{(j+1)}\bigr)^\beta P_j.
\]
Hence,
\[
C_{\beta,r}U_jC_{\beta,r}^{-1}
=
\bigl(J_\beta^{(j)}J_\beta^{(j+1)}\bigr)^\beta U_j,
\]
and, since $C_{\beta,r}$ commutes with $J_{\beta,j}^{\perp}$, we obtain
\begin{equation}
\label{eq:conj-C-Aj}
C_{\beta,r}\mathcal A_jC_{\beta,r}^{-1}
=
\bigl(J_\beta^{(j)}J_\beta^{(j+1)}\bigr)^\beta
\mathcal A_j.
\end{equation}
Combining~\eqref{eq:galois-A-before-C} and
\eqref{eq:conj-C-Aj}, and using
$\mathcal B_{\beta,r}=C_{\beta,r}B_0$, gives
\[
\mathcal B_{\beta,r}\mathcal A_j^\tau
\mathcal B_{\beta,r}^{-1}
=
(-1)^{\beta(r-1)}\mathcal A_j.
\]
Finally, Equation \eqref{eq:relcoxAsigma} gives
$
\mathcal B_{\beta,r}\mathcal A_\sigma^\tau
\mathcal B_{\beta,r}^{-1}
=
\sgn(\sigma)^{\beta(r-1)}\mathcal A_\sigma
$,
as claimed.
\end{proof}

For fixed $\beta$, we shall also regard $\lambda\in\mathcal D_r$ as the
label of the association class corresponding to $\lambda$ in
Corollary~\ref{cor:paramSA}.

\begin{lemma}
\label{lem:galois-D}
Assume that $r<p$. Let $\lambda\in\mathcal D_r$. The representations
corresponding to $\lambda$ in Corollary~\ref{cor:paramSA} are stable under
$\tau$, and
\[
e_\tau(\lambda)
\equiv
\beta\,\ell(\lambda)
\mod2.
\]
\end{lemma}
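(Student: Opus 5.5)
We will build an explicit intertwiner for the representations of Remark~\ref{rk:notR} and then read off the sign using Lemma~\ref{lem:galois-index-two}. The natural candidate is $\mathcal B_{\beta,r}\otimes B_\lambda$, where $B_\lambda$ intertwines the $\tau$-conjugate of the spin representation $\rho_\lambda$ of $S_{\beta,r}^+$ with $\rho_\lambda$ itself.

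The first step is to control the Galois action on the $S_{\beta,r}^+$ side. Since $r<p$, the order of $S_{\beta,r}^+$ (that is, $2\,r!$) is prime to $p$. The values of $\rho_\lambda$ involve only $\gamma_\beta\in\{1,\mathrm i\}$, together with $\sqrt2$ and $\sqrt{\pi_\lambda}$ from the character formulas \eqref{eq:diffSntilte} and \eqref{eq:diffAntilte}. Here $p\nmid\pi_\lambda$, because every part of $\lambda$ is at most $r<p$. By the argument of the theorem on $(\xi_\lambda^\epsilon)^{\tau_p}$, $\tau$ fixes the characters $\xi_\lambda^\pm$, and $\tau$ also fixes $\mathrm i$. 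We may therefore choose $B_\lambda$ with $B_\lambda\rho_\lambda^\tau B_\lambda^{-1}=\rho_\lambda$ and, by Lemma~\ref{lem:galois-index-two}, $B_\lambda J_\lambda^\tau B_\lambda^{-1}=J_\lambda$ in the SA case and $B_\lambda\rho_\lambda^+(x)^\tau B_\lambda^{-1}=\rho_\lambda^+(x)$ for odd $x$ in the NSA case. In other words, the factor $\rho_\lambda$ contributes exponent $0$.

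The second step is to twist $B_\lambda$ so that it absorbs the sign from Lemma~\ref{lem:galois-A}. By \eqref{eq:interbase} and \eqref{eq:intergal}, $\mathcal B_{\beta,r}\otimes B_\lambda$ conjugates $\rho_{R_\beta,\lambda}(mt_\sigma)^\tau$ to $E(m)\mathcal A_\sigma\otimes\sgn(\sigma)^{\beta(r-1)}R_\lambda(\sigma)$. We fix this as follows.
\begin{itemize}
\item If $\beta(r-1)$ is even, no twist is needed, and $\mathcal B:=\mathcal B_{\beta,r}\otimes B_\lambda$ is already an intertwiner.
\item If $\beta(r-1)$ is odd, we must replace $B_\lambda$ by an operator intertwining $\rho_\lambda^\tau$ with $\sgn\cdot\rho_\lambda$, i.e.\ with the associate of $\rho_\lambda$. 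In the SA case we take $J_\lambda B_\lambda$. In the NSA case we take $B_\lambda$ composed with the identification $\rho_\lambda^+\mapsto\rho_\lambda^-$, which is the identity on the underlying space. In either case the new intertwiner lands in the associate class, and this is where the sign appears.
\end{itemize}
This also proves that the class of $\lambda$ is $\tau$-stable.

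The third step computes $e_\tau$, using the associator of Corollary~\ref{cor:assogen} in the SA case, namely $J_{\beta,\lambda}=J_E\otimes J_\lambda$. The conjugate of $J_E^\tau$ by $\mathcal B_{\beta,r}$ equals $(-1)^{\beta r}J_E$: we use \eqref{eq:conjassD} factorwise, and the fact that $C_{\beta,r}$ commutes with $J_E$. The conjugate of $J_\lambda^\tau$ by $J_\lambda^{\beta(r-1)}B_\lambda$ equals $J_\lambda$. Hence $e_\tau\equiv\beta r\pmod 2$ in the SA case, where $\lambda\in\mathcal D_r^+$ and so $r\equiv\ell(\lambda)\pmod 2$. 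In the NSA case we take $x=t_1$ (this needs $r\ge2$; for $r=1$ we instead take an odd element of the base and use \eqref{eq:interbase}, whose value is $\beta$ by \eqref{eq:conjassD}). We get $A=\mathcal A_1\otimes R^+_\lambda(s_1)$, and $\mathcal B A^\tau\mathcal B^{-1}=(-1)^{\beta(r-1)}A$. Since $\lambda\in\mathcal D_r^-$, we have $r-1\equiv\ell(\lambda)\pmod 2$. Both cases therefore give $\beta\ell(\lambda)$.

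The main obstacle is the NSA bookkeeping: checking that the sign twist when $\beta(r-1)$ is odd is consistent with the choice of $A$, without double counting. We would verify this on the generators $t_j$ and on the base, in the spirit of the proof of Lemma~\ref{lem:galois-E}. The same check settles the edge case $r=1$ separately, where the homogeneous factor is just $R_\beta$ and the exponent is $\beta$ directly from \eqref{eq:conjassD}.
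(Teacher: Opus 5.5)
Your proposal is correct and follows the paper's proof. You use the same intertwiner $\mathcal B_{\beta,r}\otimes J_\lambda^{d}B_\lambda$ with $d=\beta(r-1)$ and the same associator $J_E\otimes J_\lambda$, which gives $(-1)^{\beta r}$ in the self-associate case, and in the non-self-associate case the same observation that the factor $\sgn(\sigma)^{d}$ sends $(\rho^+_{R_\beta,\lambda})^\tau$ to $\rho^-_{R_\beta,\lambda}$; the only difference is that you use an abstract $B_\lambda$ (with trivial sign because $2r!$ is prime to $p$) where the paper takes a $\Q_{2r!}$-model, so that $B_\lambda=I$ and $J_\lambda^\tau=J_\lambda$.

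Two small remarks. Your separate $r=1$ non-self-associate edge case is vacuous, since $(1)\in\mathcal D_1^+$. Your extra check with $A=\rho^+_{R_\beta,\lambda}(t_1)$ is redundant after Step~2; to apply Lemma~\ref{lem:galois-index-two} literally you would need the $H$-intertwiner $(J_E^{d}\otimes I)(\mathcal B_{\beta,r}\otimes B_\lambda)$, which gives the same sign because $J_E$ commutes with $\mathcal A_1$.
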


\begin{proof}
Set
\[
d\equiv \beta(r-1)\mod2.
\]
Assume first that $\lambda\in\mathcal D_r^+$. By
Corollary~\ref{cor:assogen}, the representation
$\rho_{R_\beta,\lambda}$ is self-associate. 
Let
\[
A_{\beta,r}^+=\ker\bigl(\varepsilon|_{S_{\beta,r}^+}\bigr).
\]
Set $K=\Q_{2r!}$. Since $r<p$, the field $K$ is fixed by $\tau$, and
by Brauer's theorem \cite[Theorem (10.3)]{isaacs}, it is a splitting
field for both $S_{\beta,r}^+$ and $A_{\beta,r}^+$. We may choose a
$K$-model $V_{\lambda,K}$ of $\rho_\lambda$. Since $\rho_\lambda$ is
self-associate, Clifford theory over $K$ gives
\[
V_{\lambda,K}
=
V_{\lambda,K}^+\oplus V_{\lambda,K}^-.
\]
where $V_{\lambda,K}^+$ and $V_{\lambda,K}^-$ are non-isomorphic
irreducible $\widetilde{\alt}_r$-modules. After extension of scalars to
$\C$, they afford the two constituents $\rho_\lambda^+$ and
$\rho_\lambda^-$ of the restriction of $\rho_\lambda$.
Now, we define $J_\lambda$ on $V_{\lambda,K}$
by
\[
J_\lambda|_{V_{\lambda,K}^+}=I,
\qquad
J_\lambda|_{V_{\lambda,K}^-}=-I.
\]
Then $J_\lambda^2=I$ and $ J_\lambda \rho_\lambda(g)J_\lambda^{-1} =
\varepsilon(g)\rho_\lambda(g) $. By \eqref{eq:defR}, we deduce that for
all $\sigma\in \sym_r$,
\begin{equation}
\label{eq:locrel}
J_\lambda R_\lambda(\sigma)
J_\lambda^{-1}=\sgn(\sigma)R_\lambda(\sigma).
\end{equation}
Moreover, since $J_\lambda\in \GL(V_{\lambda,K})$ and $\tau$ acts trivially
on $K$, we deduce that $ J_\lambda^\tau=J_\lambda $.

\noindent
Set \[
B_{\beta,\lambda}
=
\mathcal B_{\beta,r}\otimes J_\lambda^d.
\]
Let $m\in (N_\beta^+)^{\widehat{\times} r}$ and $\sigma\in\sym_r$. 
Now, Equation \eqref{eq:defR} gives $R_\lambda^\tau= R_\lambda$. By
\eqref{eq:valmatriceproj}, \eqref{eq:interbase}, \eqref{eq:intergal} and
\eqref{eq:locrel} we
obtain
\[
\begin{aligned}
B_{\beta,\lambda}
\rho_{R_\beta,\lambda}(mt_\sigma)^\tau
B_{\beta,\lambda}^{-1}
&=
E(u)\,
\sgn(\sigma)^d\mathcal A_\sigma
\otimes
J_\lambda^dR_\lambda(\sigma)J_\lambda^{-d}\\
&=
\sgn(\sigma)^{2d}E(u)\mathcal A_\sigma\otimes R_\lambda(\sigma)\\
&=\rho_{R_\beta,\lambda}(m t_\sigma).
\end{aligned}
\]
Hence, $B_{\beta,\lambda}$ intertwines $\rho_{R_\beta,\lambda}^\tau$ and
$\rho_{R_\beta,\lambda}$. Moreover, since $C_{\beta,r}$ commutes with
$J_E$, equation~\eqref{eq:conjassD} gives
\[
\mathcal B_{\beta,r}J_E^\tau\mathcal B_{\beta,r}^{-1}
=
(-1)^{\beta r}J_E.
\]
Since $J_\lambda^d$ commutes with $J_\lambda$ and
$J_\lambda^\tau=J_\lambda$, we then obtain
\[
B_{\beta,\lambda}
J_{\beta,\lambda}^\tau
B_{\beta,\lambda}^{-1}
=
(-1)^{\beta r}J_{\beta,\lambda}.
\]
Hence, by Lemma~\ref{lem:galois-index-two} we have
\[
e_\tau(\lambda)
\equiv\beta r\mod2.
\]
Moreover, by definition of $\mathcal D_r^+$, we have
$r-\ell(\lambda)\equiv0\mod2$. Finally,
\[
e_\tau(\lambda)
\equiv
\beta\,\ell(\lambda)
\mod2.
\]

Assume now that $\lambda\in\mathcal D_r^-$. By Remark~\ref{rk:notR}, the
corresponding association class consists of a non-self-associate pair $
\rho_{R_\beta,\lambda}^+$  and $\rho_{R_\beta,\lambda}^-$.
Now, using Equation~\eqref{eq:valmatriceproj} and
$(R_\lambda^+)^\tau=R_\lambda^+$,
we have
\[
\begin{aligned}
(\mathcal B_{\beta,r}\otimes I)
\rho_{R_\beta,\lambda}^+(ut_\sigma)^\tau
(\mathcal B_{\beta,r}^{-1}\otimes I)
&=
E(u)\,
\sgn(\sigma)^d\mathcal A_\sigma
\otimes R_\lambda^+(\sigma).
\end{aligned}
\]
Thus $(\rho_{R_\beta,\lambda}^+)^\tau$ is equivalent to
$\rho_{R_\beta,\lambda}^+$ if $d=0$, and to
$\rho_{R_\beta,\lambda}^-$ if $d=1$ (because
$R_\lambda^-(\sigma)=-R_\lambda^+(\sigma)$). This proves that
\[
e_\tau(\lambda)
\equiv d
\equiv\beta(r-1)
\mod2.
\]
Finally, since $\lambda\in\mathcal D_r^-$, $ r-\ell(\lambda)\equiv1\mod2
$, so that
\[
r-1\equiv\ell(\lambda)\mod2.
\]
Hence,
\[
e_\tau(\lambda)
\equiv
\beta\,\ell(\lambda)
\mod2.
\]
This proves the claim.
\end{proof}

\subsubsection{The non-self-associate elementary factors}

\begin{lemma}
\label{lem:galois-NSA-elementary}
Let $\rho$ be a non-self-associate elementary spin representation of
$N_\beta^+$. For every partition $\lambda\vdash r$, let $E_{\rho,\lambda}$
be the corresponding irreducible representation of $(N_\beta\wr \sym_r)^+$.
Then $E_{\rho,\lambda}$ is fixed by $\tau$ and
\[
e_\tau(E_{\rho,\lambda})=0.
\]
\end{lemma}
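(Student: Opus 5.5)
We will imitate the proof of Lemma~\ref{lem:galois-D}, but now for the representation $\widetilde E_\rho$ of \eqref{eq:defopnsa} tensored with $\Inf S^\lambda$. The first step is to show that $\rho$ itself is fixed by $\tau$ (and not merely that its association class is stable). Write $\rho=\rho_{\boldsymbol\phi}=\Lambda_\beta\otimes\Inf(\phi_1\boxtimes\cdots\boxtimes\phi_\beta)$ with some $\phi_k\neq\Phi$. Each linear character of $G_p$ factors through $\mathbb F_p^\times$, so its values are $(p-1)$-th roots of unity and are fixed by $\tau\in G(p)$. The character $\Phi$ is rational. Finally, $\Lambda_\beta$ is $\tau$-fixed, as was already used in the proof of Lemma~\ref{lem:galois-E}. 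Hence we may choose an intertwiner $B_\rho:\rho^\tau\to\rho$. To get the cleanest statement, we would pick a model of $\rho$ over a field fixed by $\tau$, namely $\Q_{p-1}$ adjoined the values of $\Lambda_\beta$ (or $\Q_{4(p-1)}$). Then $\rho^\tau=\rho$ as matrices and $B_\rho=I$. At least we take $B_\rho=B_1\otimes\cdots\otimes B_\beta$ factorwise.

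Next we set $\mathcal B=B_\rho^{\otimes r}\otimes I_{W_r}\otimes I_{S^\lambda}$. The matrices $F_k$ and $D_j$ of Michler--Olsson have entries in $\Q(\mathrm i)$, the scalar $\gamma_\beta$ lies in $\{1,\mathrm i\}$, and $\tau(\mathrm i)=\mathrm i$ since $p$ is odd. The representation $S^\lambda$ is rational. Consequently $E_\rho(x^{(j)})^\tau=\rho(x)^{\tau\,(j)}\otimes F_{r-j+1}^{\partial(x)}$ and $\widetilde E_\rho(t_j)^\tau=\widetilde E_\rho(t_j)$. Moreover $B_\rho^{\otimes r}$ commutes with the permutation operators $P_j$. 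It follows that $\mathcal B$ intertwines $E_{\rho,\lambda}^\tau$ with $E_{\rho,\lambda}$, so $E_{\rho,\lambda}$ is $\tau$-fixed. This also covers the case where $E_{\rho,\lambda}$ is self-associate.

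It remains to compute $e_\tau$ via Lemma~\ref{lem:galois-index-two}, splitting by Proposition~\ref{prop:asso1}.

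\emph{Case $r$ odd (non-self-associate).} The two associates are realized on the same space, and we take $A=E_{\rho,\lambda}(x)$ for some $x\notin$ the even part, for instance $x=t_1$ when $r\ge2$, or an element of the base with $\partial=1$. The intertwiner $\mathcal B$ sends $A^\tau$ to $A$ itself, because $\mathcal B$ intertwines the full extension $E_{\rho,\lambda}$ and not only its restriction. Hence $c_\tau=1$.

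\emph{Case $r$ even (self-associate).} The associator is $J=I\otimes c_r\Omega_r\otimes I$. The product $\Omega_r=F_r\cdots F_1$ has entries in $\Q(\mathrm i)$, and $c_r$ satisfies $c_r^2\in\{\pm1\}$, so $c_r\in\Q(\mathrm i)$ as well. Thus $J^\tau=J$, and $\mathcal B$ acts trivially on the $W_r$ factor. This gives $\mathcal BJ^\tau\mathcal B^{-1}=J$, so again $c_\tau=1$. In both cases $e_\tau(E_{\rho,\lambda})=0$.

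The main obstacle is the first step: ensuring that $\rho$ is genuinely $\tau$-fixed with a controllable intertwiner, rather than fixed only up to association. The difficulty is that $\rho$ may contain nonlinear factors $\Phi$. The induced model of $\Phi$ used in the proof of Lemma~\ref{lem:galois-E} involves $\zeta_p$, and there $B$ anticommutes with $J$. If one uses that model, then $B_\rho$ is $B^{\otimes k}$ on the $\Phi$-factors. We must check that this causes no sign, and here the structure of the construction helps. $B_\rho$ appears only through $B_\rho^{\otimes r}$, which acts on the $V^{\otimes r}$ factor. The associators and the elements $A$ used above, by contrast, live on $W_r$ or are built from $E_{\rho,\lambda}$ itself, and never involve $J$ on $V$. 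Therefore no $(-1)^\beta$ sign can arise, unlike in Lemma~\ref{lem:galois-E}.
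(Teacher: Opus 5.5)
Your proposal is correct and follows the paper's proof. The steps match: a factorwise intertwiner $B_\rho$, then $B_\rho^{\otimes r}\otimes I_{W_r}\otimes I_{S^\lambda}$, which commutes with the $P_j$ and leaves the $\tau$-fixed Clifford matrices alone, then Lemma~\ref{lem:galois-index-two} with $A=E_{\rho,\lambda}(x)$ for $r$ odd and the $\tau$-fixed associator $I\otimes c_r\Omega_r\otimes I$ for $r$ even. One harmless correction: because of the factor $\sqrt{-1/2}$, the $D_j$ have entries in $\mathbb Q(\mathrm i,\sqrt2)$ rather than $\mathbb Q(\mathrm i)$, and these are still fixed by $\tau$ since $\sqrt2\in\mathbb Q_8$ and $p$ is odd.
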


\begin{proof}
Recall that $\rho$ has the form
$
\rho
=
\Lambda_\beta\otimes
\Inf(\phi_1\boxtimes\cdots\boxtimes\phi_\beta)$,
where each $\phi_k$ is either linear or equal to $\varphi$. Let $B$ be the
intertwiner previously defined for $\varphi$, so that $ B\varphi^\tau
B^{-1}=\varphi$. For $1\leq k\leq\beta$, set
\[
B_k=
\begin{cases}
I,&\text{if $\phi_k$ is linear},\\
B,&\text{if $\phi_k=\varphi$},
\end{cases}
\quad\text{and}\quad
B_\rho=B_1\otimes\cdots\otimes B_\beta.
\]
The linear factors, as well as $\Lambda_\beta$, are fixed by $\tau$.
Hence, for $ x\in N_\beta^+$,
\[
B_\rho\rho^\tau(x)B_\rho^{-1}
=
\rho(x).
\]
Recall that the homogeneous representation $E_\rho$ is realized on
$V^{\otimes r}\otimes W_r$.
Set
\[
\mathcal B_\rho
=
B_\rho^{\otimes r}\otimes I_{W_r}.
\]
Schur's explicit construction \cite[\S22, p. 209]{schur} shows that the matrices
\(F_\nu\) have entries in \(\mathbb Q(i)\), and the normalization used by
Michler--Olsson differs only by fourth roots of unity; hence the matrices
\(F_\nu\) used here are defined over \(\mathbb Q(i)\). In particular, $
F_\nu^{\tau_p}=F_\nu$ for all $\nu$.
It follows that
\[
\mathcal B_\rho
E_\rho(x^{(j)})^\tau
\mathcal B_\rho^{-1}
=
E_\rho(x^{(j)}).
\]
Moreover, $B_\rho^{\otimes r}$ commutes with $P_j$, since the same
operator $B_\rho$ acts on every tensor factor. 
Michler--Olsson use
\[
D_j
=
(-1)^{r-j-1}\sqrt{-\frac12}\,
\bigl(F_{r-j}+F_{r-j+1}\bigr),
\]
which are fixed by $\tau$. Since
$\gamma_\beta\in\{1,\mathrm i\}$ is also fixed by $\tau$, the matrices
$\gamma_\beta D_j$ are fixed by $\tau$. Hence,
\[
\mathcal B_\rho
\widetilde E_\rho(t_j)^\tau
\mathcal B_\rho^{-1}
=
\widetilde E_\rho(t_j).
\]
The representation $E_{\rho,\lambda}$ is obtained by tensoring $\widetilde
E_\rho$ with the inflation of the ordinary Specht representation
$S^\lambda$. Since $S^\lambda$ may be realized over $\Q$, the operator $
\mathcal B_\rho\otimes I_{S^\lambda} $ intertwines $E_{\rho,\lambda}^\tau$
and $E_{\rho,\lambda}$.

If $E_{\rho,\lambda}$ is non-self-associate, this shows that its two
associate representations are fixed individually by $\tau$, and hence $
e_\tau(E_{\rho,\lambda})=0 $.
Suppose now that $E_{\rho,\lambda}$ is self-associate. By
Proposition~\ref{prop:asso1}, a normalized associator $J_{\rho,r,\lambda}$
is obtained from $ c_r\Omega_r$ with $\Omega_r=F_rF_{r-1}\cdots F_1$,
where $ (c_r\Omega_r)^2=I$. Since the matrices $F_\nu$ anticommute
pairwise and satisfy $F_\nu^2=I$, we have $ \Omega_r^2 =
(-1)^{r(r-1)/2}I$.
It follows that
\[
c_r^2=(-1)^{r(r-1)/2},
\]
and is therefore fixed by $\tau$. Then
$J_{\rho,r,\lambda}$ is fixed by $\tau$, and commutes with $\mathcal
B_\rho\otimes I_{S^\lambda}$. Hence, Lemma~\ref{lem:galois-index-two}
gives $e_\tau(E_{\rho,\lambda})=0$.
\end{proof}

\begin{theorem}
\label{thm:galois-level-beta}
Let $\beta\geq 1$, and let
\[
\mathcal C_\beta
=
\bigl(
\lambda_\beta^{(0)},\lambda_\beta^{(1)},\ldots,
\lambda_\beta^{(e_\beta)} 
\bigr) \in \mathfrak C_\beta
\]
label the association class containing $\rho_{\beta,\mathcal
C_\beta}$, or by $\rho_{\beta,\mathcal C_\beta}^\pm$ in the
non-self-associate case. Assume that $|\lambda_\beta^{(j)}|< p$ for all
$0\leq j\leq e_\beta$. Then this association class is stable under $\tau$,
and
\[
e_\tau(\mathcal C_\beta) \equiv \beta\,\ell\bigl(\lambda_\beta^{(0)}\bigr)
\mod2.
\]
\end{theorem}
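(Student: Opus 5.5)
The plan is to compute the Galois bit first on $K_\beta$, which is a Humphreys product of homogeneous factors, and then transfer it to $H_\beta^+$ through induction.

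\textbf{Step 1: the homogeneous factors.} By construction, the representation $\Psi_{\beta,\mathcal C_\beta}$ (or $\Psi^\pm_{\beta,\mathcal C_\beta}$) of
\[
K_\beta\simeq\widehat{\prod}_{j=0}^{e_\beta}\bigl(N_\beta\wr\sym_{n_\beta^{(j)}}\bigr)^+
\]
is the Humphreys product of the homogeneous factors attached to $\lambda_\beta^{(0)},\ldots,\lambda_\beta^{(e_\beta)}$. Put $r=|\lambda_\beta^{(0)}|<p$. By Lemma~\ref{lem:galois-D}, the class of the $0$-th factor $\rho_{R_\beta,\lambda_\beta^{(0)}}$ is $\tau$-stable and has bit $\beta\,\ell(\lambda_\beta^{(0)})$. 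For $j\geq1$, the factor is $E_{\rho_\beta^{(j)},\lambda_\beta^{(j)}}$. Lemma~\ref{lem:galois-NSA-elementary} shows it is fixed by $\tau$ with bit $0$; this needs no bound on the size, since Specht modules are rational. When $n_\beta^{(0)}=0$ or $n_\beta^{(j)}=0$, the corresponding factor is trivial and contributes nothing.

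\textbf{Step 2: the Humphreys product.} Since $\tau$ fixes $i$, Lemma~\ref{lem:BN-propagation} applies. It shows that the association class of $\Psi$ is $\tau$-stable and that
\[
e_\tau(\Psi)\equiv\beta\,\ell(\lambda_\beta^{(0)})\mod 2.
\]

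\textbf{Step 3: passage to $H_\beta^+$.} Induction commutes with Galois action, so $(\Ind_{K_\beta}^{H_\beta^+}\Psi)^\tau=\Ind_{K_\beta}^{H_\beta^+}\Psi^\tau$. Hence the class of $\rho_{\beta,\mathcal C_\beta}$ is $\tau$-stable. It remains to see that the bit is preserved, and I split by association type, which Lemma~\ref{lem:type-induction} preserves.

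In the non-self-associate case, $\Psi^\pm$ induce to $\rho^\pm_{\beta,\mathcal C_\beta}$, and these inductions are distinct by Lemma~\ref{lem:type-induction}. Therefore $\tau$ swaps $\rho^\pm_{\beta,\mathcal C_\beta}$ exactly when it swaps $\Psi^\pm$, so the bits agree.

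In the self-associate case, I use the characterization of Lemma~\ref{lem:galois-index-two} in terms of the constituents. The constituent $\chi^\pm$ of the restriction of $\rho_{\beta,\mathcal C_\beta}$ to the index-two subgroup $H_\beta^+\cap\ker\varepsilon$ is obtained by Mackey's formula. Because $K_\beta\not\leq\ker\varepsilon$ (it contains odd elements of the base), we have $K_\beta(H_\beta^+\cap\ker\varepsilon)=H_\beta^+$, and therefore
\[
\Res\,\Ind_{K_\beta}^{H_\beta^+}\Psi=\Ind_{K_\beta\cap\ker\varepsilon}^{H_\beta^+\cap\ker\varepsilon}\bigl(\Psi^+\oplus\Psi^-\bigr).
\]
Both inductions on the right are irreducible, since the left-hand side has exactly two constituents. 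So the labelling $\chi^\pm:=\Ind\Psi^\pm$ is compatible with $\tau$, and the bits agree.

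\textbf{Main obstacle.} The delicate point is Step 3 in the self-associate case: we need the two inductions $\Ind\Psi^\pm$ to be distinct, so that the identification of constituents is unambiguous. This follows from the count above, because $\Res\,\rho_{\beta,\mathcal C_\beta}$ has exactly two distinct constituents. A secondary check is that Lemma~\ref{lem:BN-propagation} really applies here, i.e.\ that each factor's class is individually $\tau$-stable; Steps 1 and 2 supply this.
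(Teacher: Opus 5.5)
Your proposal is correct. Steps 1--2 and the non-self-associate half of Step~3 match the paper's argument: Lemmas~\ref{lem:galois-D}, \ref{lem:galois-NSA-elementary} and \ref{lem:BN-propagation} give the bit on $K_\beta$, and the inductions inherit it because induction commutes with $\tau$ and, by Lemma~\ref{lem:type-induction}, the two induced representations are distinct.

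The genuine difference is the self-associate transfer from $K_\beta$ to $H_\beta^+$.

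\begin{itemize}
\item The paper stays in its operator formalism. On $\mathbb C[H_\beta^+]\otimes_{\mathbb C[K_\beta]}V$ it defines the associator $\widetilde J(h\otimes v)=\varepsilon(h)\,h\otimes Jv$ and the intertwiner $\widetilde B(h\otimes v)=h\otimes Bv$. It checks that both are well defined and that $\widetilde B\widetilde J^\tau\widetilde B^{-1}=(-1)^{e_\tau(\Psi)}\widetilde J$, so Lemma~\ref{lem:galois-index-two} applies verbatim.
\item You work with characters instead. Since $\varepsilon$ is nontrivial on the base group, $K_\beta(H_\beta^+\cap\ker\varepsilon)=H_\beta^+$, so there is a single double coset. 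Mackey then gives $\Res\Ind_{K_\beta}^{H_\beta^+}\Psi=\Ind\Psi^+ + \Ind\Psi^-$. Because this restriction has exactly two distinct constituents, each $\Ind\Psi^\pm$ is one of them, and they are different. Whether $\tau$ swaps them is then inherited from $\Psi^\pm$, since induction commutes with Galois conjugation.
\end{itemize}

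Your route is shorter and needs no well-definedness checks on the induced module. It also shows that the constituents of $\Res\rho_{\beta,\mathcal C_\beta}$ are themselves induced from those of $\Psi$. The paper's route produces explicit $\widetilde J$ and $\widetilde B$, keeping it uniform with the intertwiner bookkeeping used throughout Section~\ref{sec:galois-local}. Both rest on the same equivalence in Lemma~\ref{lem:galois-index-two}: the sign $c_\tau$ records whether $\tau$ exchanges the two constituents of the restriction.
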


\begin{proof}
We apply Lemma~\ref{lem:BN-propagation}, Lemma~\ref{lem:galois-D} and
Lemma~\ref{lem:galois-NSA-elementary}. These results give
\begin{equation}
\label{eq:p1}
e_\tau(\Psi_{\beta,\mathcal C_\beta})
\equiv
\beta\,\ell\bigl(\lambda_\beta^{(0)}\bigr)
\mod2.
\end{equation}
It remains to determine the action on the constituents of
\[
\rho_{\beta,\mathcal C_\beta}
=
\Ind_{K_\beta}^{H_\beta^+}
\Psi_{\beta,\mathcal C_\beta}.
\]
Since induction commutes with Galois conjugation and with tensoring by
the sign character, this preserves the Galois action.
Indeed, in the non-self-associate case, if $\tau$ fixes
$\Psi_{\beta,\mathcal C_\beta}$, respectively exchanges it with its
associate, then it fixes, respectively exchanges, the two corresponding
induced representations. 
Assume that $\Psi=\Psi_{\beta,\mathcal C_\beta}$ is self-associate. Write
$R=\rho_{\beta,\mathcal C_\beta}$ and assume it is realized on
$\mathbb C[H_\beta^+]\otimes_{\mathbb C[K_\beta]}V$
so that $ hk\otimes v=h\otimes\Psi(k)v$ for $h\in H_\beta^+,\ k\in
K_\beta,\ v\in V$ and $ R(g)(h\otimes v)=gh\otimes v$.
Let $J$ be a normalized associator for $\Psi$, so that
$
J\Psi(k)=\varepsilon_{K_\beta}(k)\Psi(k)J$ for $k\in K_\beta$.
Define
\[
\widetilde J(h\otimes v)
=
\varepsilon_{H_\beta^+}(h)\,h\otimes Jv,
\]
which is well defined because
\[
\begin{aligned}
\widetilde J(hk\otimes v)
&=
\varepsilon_{H_\beta^+}(hk)\,hk\otimes Jv
=
\varepsilon_{H_\beta^+}(h)\,
h\otimes\varepsilon_{K_\beta}(k)\Psi(k)Jv
=
\varepsilon_{H_\beta^+}(h)\,
h\otimes J\Psi(k)v\\
&=
\widetilde J(h\otimes\Psi(k)v).
\end{aligned}
\]
For $g\in H_\beta^+$, $h\in H_{\beta}^+$ and $v\in V$, we have
\[
\widetilde J R(g)(h\otimes v)
=
\widetilde J(gh\otimes v)
=
\varepsilon_{H_\beta^+}(gh)\,gh\otimes Jv
=
\varepsilon_{H_\beta^+}(g)\,
R(g)\widetilde J(h\otimes v).
\]
Hence,
$
\widetilde J R(g)\widetilde J^{-1}
=
\varepsilon_H(g)R(g)$,
and, since $J^2=I$, we also have $\widetilde J^2=I$. 
Now let $B$ intertwine $\Psi^\tau$ with $\Psi$, so that
$
B\Psi^\tau(k)=\Psi(k)B$ for $k\in K_\beta$.
We have 
$ R^\tau\simeq\Ind_{K_\beta}^{H_\beta^+}(\Psi^\tau)$.  For $h\in H_\beta^+$ and
$v\in V$, define $\widetilde B(h\otimes v)=h\otimes Bv$.
This is well defined because
\[
\widetilde B(hk\otimes v)
=
hk\otimes Bv
=
h\otimes\Psi(k)Bv
=
h\otimes B\Psi^\tau(k)v
=
\widetilde B(h\otimes\Psi^\tau(k)v).
\]
Furthermore, we have
\[
\widetilde B R^\tau(g)(h\otimes v)
=
gh\otimes Bv
=
R(g)\widetilde B(h\otimes v),
\]
so $\widetilde B$ intertwines $R^\tau$ with $R$.
Moreover, if
$
B J^\tau B^{-1}
=
(-1)^{e_\tau(\Psi_{\beta,\mathcal C_\beta})}J$,
then 
$$\widetilde{J}^\tau(h\otimes v)=
\varepsilon_{H_\beta^+}(h)h\otimes J^{\tau}(v).
$$
Hence,
\[
\widetilde B\,\widetilde J^\tau\,\widetilde B^{-1}
=
(-1)^{e_\tau(\Psi_{\beta,\mathcal C_\beta})}\widetilde J.
\]
Hence
$
e_\tau(\rho_{\beta,\mathcal C_\beta})
=
e_\tau(\Psi_{\beta,\mathcal C_\beta})
$
and \eqref{eq:p1} gives the result.
\end{proof}

\section{Proof of the main theorem}
\label{sec:main-proof}

Let $\lambda\in\mathcal D_n$, and write
$
\mathcal C_\beta(\lambda)
=
\bigl(
\lambda_\beta^{(0)},\lambda_\beta^{(1)},\ldots,
\lambda_\beta^{(e_\beta)}
\bigr)
$ 
for the $\beta$-th level of its $p$-bar core tower. Recall that
the $p$-adic expansion of $n$ is
\[
n=\sum_{\beta\geq0}n_\beta p^\beta.
\]
By the $p'$-degree criterion for the $p$-bar core tower
(see~\cite[Proposition~7.3]{olsson}), the spin character labelled by
$\lambda$ has degree prime to $p$ if and only if
\begin{equation}
\label{eq:pprime-tower}
\sum_{j=0}^{e_\beta}
\bigl|\lambda_\beta^{(j)}\bigr|
=
n_\beta
\qquad
\text{for every }\beta\geq0.
\end{equation}
For each bar partition $\lambda$ labelling a spin character of
$p'$-degree, let $\mathcal X_\lambda$ denote the association class of
spin characters of $\overline N$
obtained as follows. The factor $\widetilde{\sym}_{n_0}$ is labelled by
$\lambda_0^{(0)}$. For every $\beta\geq1$,
condition~\eqref{eq:pprime-tower} says precisely that
$\mathcal C_\beta(\lambda)$ is an admissible $\beta$-tuple, and hence
labels an association class of $H_\beta^+$ by
Theorem~\ref{thm:param-level}.
Via inflation, we also regard $\mathcal X_\lambda$ as an association class
of spin $p'$-characters of $N$. In particular, we obtain a bijection
\begin{equation}
\label{eq:bijass}
\lambda\longmapsto\mathcal X_\lambda
\end{equation}
between the association classes of spin $p'$-characters of
$\widetilde{\sym}_n$ and those of $N$.

\begin{lemma}
\label{lem:global-local-association}
The global association class labelled by $\lambda$ and the local
association class $\mathcal X_\lambda$ have the same association type.
\end{lemma}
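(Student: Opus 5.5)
We compare the two parities directly: the global one read off from $d(\lambda)$, and the local one read off from the factors of the Humphreys product.

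\emph{Global side.} The global class labelled by $\lambda$ is self-associate if and only if $\lambda\in\mathcal D_n^+$, that is, $n-\ell(\lambda)\equiv0\mod2$. We rewrite this in terms of the tower. By \eqref{eq:length-parity-tower} with $k=0$, $\ell(\lambda)\equiv\sum_{\beta\geq0}\ell(\lambda_\beta^{(0)})\mod2$. Writing $n=\sum_\beta n_\beta p^\beta$ with $p$ odd gives $n\equiv\sum_\beta n_\beta\mod2$. Hence the global class is self-associate if and only if
\[
\sum_{\beta\geq0}\bigl(n_\beta-\ell(\lambda_\beta^{(0)})\bigr)\equiv0\mod2 .
\]

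\emph{Local side.} The class $\mathcal X_\lambda$ is the association class of a Humphreys product
\[
\xi_{\lambda_0^{(0)}}\,\widehat\otimes\,\widehat{\bigotimes_{\beta\geq1}}\,\rho_{\beta,\mathcal C_\beta(\lambda)}
\]
over the decomposition \eqref{eq:reduced-local-decomposition}, inflated to $N$. For $\beta=0$, the factor is $\tSym_{n_0}$ labelled by $\lambda_0^{(0)}$, which is a $p$-bar core of size $n_0$ by \eqref{eq:pprime-tower}. It is therefore self-associate if and only if $n_0-\ell(\lambda_0^{(0)})$ is even. For $\beta\geq1$, Theorem~\ref{thm:param-level} says the factor is self-associate if and only if $n_\beta-\ell(\lambda_\beta^{(0)})$ is even.

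\emph{Combining.} We invoke Humphreys' association rule \cite[Proposition~1.2]{MichlerOlsson}, as in the proof of Lemma~\ref{lem:typeQi}. In a twisted tensor product, let each factor carry the parity bit $0$ if it is self-associate and $1$ if it is not. The product is self-associate exactly when the sum of these bits is even. Summing the bits from the previous step gives exactly the displayed global criterion, so the two association types agree. Finally, inflation from $\overline N$ to $N$ preserves association type, since $\varepsilon$ on $N$ is inflated from $\overline N$; indeed $P'\leq\tAlt_n$ because $P'$ has odd order.

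\emph{Main obstacle.} The delicate point is justifying the Humphreys rule for the iterated product in \eqref{eq:reduced-local-decomposition} when some factors are trivial, i.e.\ when $n_\beta=0$. Such factors are simply omitted, and they contribute $0$ on both sides. The other thing to check is that the $\varepsilon$ of $\overline N$ restricts to the $\varepsilon$ of each factor, so that "associate" means the same thing throughout. The parity bookkeeping itself is routine once \eqref{eq:length-parity-tower} is in hand.
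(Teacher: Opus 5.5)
Your proposal is correct and follows the paper's own proof. You rewrite the global criterion $n-\ell(\lambda)\equiv 0 \bmod 2$ using $n\equiv\sum_\beta n_\beta$ and \eqref{eq:length-parity-tower} at $k=0$, then match it against the sum of the local parity bits coming from the $\tSym_{n_0}$ factor and Theorem~\ref{thm:param-level}, combined by Humphreys' rule; your extra remarks on inflation and on empty levels are harmless additions.
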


\begin{proof}
The global class labelled by $\lambda$ is self-associate if and only if
\[
n-\ell(\lambda)\equiv0\mod2.
\]
For $\beta\geq1$, Lemma~\ref{lem:typeQi} and
Theorem~\ref{thm:param-level} show that the class labelled by
$\mathcal C_\beta(\lambda)$ is self-associate if and only if
\[
n_\beta-\ell\bigl(\lambda_\beta^{(0)}\bigr)
\equiv0\mod2.
\]
The same criterion holds for the factor
$\widetilde{\sym}_{n_0}$ labelled by $\lambda_0^{(0)}$.
By \cite[Proposition 1.2]{MichlerOlsson}, 
the association type of
$\mathcal X_\lambda$ is then determined by
\[
\sum_{\beta\geq0}
\left(
n_\beta-\ell\bigl(\lambda_\beta^{(0)}\bigr)
\right)
\mod2.
\]
Moreover, using \eqref{eq:expansionadicn}, we obtain
\[
n\equiv\sum_{\beta\geq0}n_\beta\mod2,
\]
because $p$ is odd. Furthermore, the parity relation for the $p$-bar core
tower gives
\[
\ell(\lambda)
\equiv
\sum_{\beta\geq0}
\ell\bigl(\lambda_\beta^{(0)}\bigr)
\mod2.
\]
Hence,
\[
n-\ell(\lambda)
\equiv
\sum_{\beta\geq0}
\left(
n_\beta-\ell\bigl(\lambda_\beta^{(0)}\bigr)
\right)
\mod2,
\]
and the result follows.
\end{proof}
\noindent
For a bar partition $\lambda$, we write $ e_{\tau_p}(\mathcal X_\lambda) $
for the integer (which is well-defined modulo $2$) such that
$c_{\tau_p}(\lambda)=(-1)^{e_{\tau_p}(\mathcal X_\lambda)}$, in the sense
of Lemma~\ref{lem:galois-index-two}.
\begin{lemma}
\label{lem:global-local-galois}
With the above notation, we have
\[
e_{\tau_p}(\mathcal X_\lambda)
=
e_{\tau_p}(\lambda).
\]
\end{lemma}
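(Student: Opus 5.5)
We compute both sides as explicit parities and compare them through Theorem~\ref{thm:globalprat}. On the global side, the theorem of \S2.3 gives $(\xi_\lambda^\epsilon)^{\tau_p}=\xi_\lambda^{\epsilon(-1)^{\nu_p(\pi_\lambda)}}$. This holds both in the non-self-associate case, where the pair $\xi_\lambda^\pm$ lives on $\tSym_n$, and in the self-associate case, where the two constituents live on $\tAlt_n$. In the language of Lemma~\ref{lem:galois-index-two} this means $e_{\tau_p}(\lambda)\equiv\nu_p(\pi_\lambda)\bmod 2$. Theorem~\ref{thm:globalprat} then rewrites this as
\[
e_{\tau_p}(\lambda)\equiv\sum_{\beta\geq1}\beta\,\ell\bigl(\lambda_\beta^{(0)}\bigr)\mod 2 .
\]

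On the local side, $\mathcal X_\lambda$ is inflated from a Humphreys product over $\overline N\simeq\tSym_{n_0}\widehat\times\widehat\prod_{\beta\geq1}H_\beta^+$. Inflation along $N\to\overline N$ is compatible with Galois conjugation and with association: the sign character of $N$ is inflated from that of $\overline N$, and associators and intertwiners pull back. So it suffices to compute $e_{\tau_p}$ on $\overline N$. There we apply Lemma~\ref{lem:BN-propagation}, which is legitimate because $\tau_p(i)=i$ for $p$ odd. This gives
\[
e_{\tau_p}(\mathcal X_\lambda)\equiv e_{\tau_p}(\lambda_0^{(0)})+\sum_{\beta\geq1}e_{\tau_p}(\mathcal C_\beta(\lambda))\mod2,
\]
provided each factor's association class is $\tau_p$-stable; the next two steps establish this stability.

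We then treat each level separately. For $\beta\geq1$, condition \eqref{eq:pprime-tower} forces $|\lambda_\beta^{(j)}|\leq n_\beta<p$ for every $j$. Theorem~\ref{thm:galois-level-beta} therefore applies, giving stability and $e_{\tau_p}(\mathcal C_\beta(\lambda))\equiv\beta\,\ell(\lambda_\beta^{(0)})$. For the factor $\tSym_{n_0}$ we have $n_0<p$, so every bar partition of $n_0$ has $\nu_p(\pi)=0$. The global theorem of \S2.3, applied to $\tSym_{n_0}$, then shows that both characters are $\tau_p$-fixed. Equivalently, $\Q_{2n_0!}$ is fixed by $\tau_p$ and $\tSym_{n_0}$ is split over it, so this factor contributes $0$. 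Summing the contributions gives $\sum_{\beta\geq1}\beta\,\ell(\lambda_\beta^{(0)})$, which equals the global exponent.

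The main obstacle is the bookkeeping behind the propagation step. Lemma~\ref{lem:BN-propagation} is formulated for $\tau$ acting on characters of the individual factors, so it must be applied to the iterated product of the $\tSym_{n_0}$ factor with the $H_\beta^+$. We also need the association type of each factor to be the one recorded by Lemma~\ref{lem:typeQi} and Theorem~\ref{thm:param-level}, so that the SA/NSA case distinction in the character formulas of \cite{BrNaMZ} matches. I would handle this by induction on the number of levels, applying the binary version of Lemma~\ref{lem:BN-propagation} at each step. A final check is that $e_{\tau_p}$ is well defined modulo $2$ independently of the chosen representative, which holds because $\tau_p$ commutes with $\varepsilon$-twisting.
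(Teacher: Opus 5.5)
Your proposal is correct and follows the paper's proof. The global exponent is read off as $\nu_p(\pi_\lambda)$ and rewritten via Theorem~\ref{thm:globalprat}, and the local exponent is assembled by Lemma~\ref{lem:BN-propagation} from the level-wise values of Theorem~\ref{thm:galois-level-beta} plus the trivial contribution of the $\tSym_{n_0}$ factor (which the paper takes directly from Brauer's theorem, since $p\nmid|\tSym_{n_0}|$); you also make explicit the check, left implicit in the paper, that admissibility forces $|\lambda_\beta^{(j)}|\leq n_\beta<p$, so Theorem~\ref{thm:galois-level-beta} indeed applies.
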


\begin{proof}
Since $n_0<p$, we have $ p\nmid |\widetilde{\sym}_{n_0}|$ and $ p\nmid
|\widetilde{\alt}_{n_0}|$. By Brauer's theorem~\cite[Theorem
(10.3)]{isaacs}, every irreducible character of these two groups is fixed
by $\tau_p$.
For $\beta\geq1$, Theorem~\ref{thm:galois-level-beta} gives
\[
e_{\tau_p}\bigl(\mathcal C_\beta(\lambda)\bigr)
\equiv
\beta\,\ell\bigl(\lambda_\beta^{(0)}\bigr)
\mod2.
\]
Lemma~\ref{lem:BN-propagation} then yields
\[
e_{\tau_p}(\mathcal X_\lambda)
\equiv
\sum_{\beta\geq1}
\beta\,\ell\bigl(\lambda_\beta^{(0)}\bigr)
\mod2.
\]
On the other hand, Theorem~\ref{thm:globalprat} gives
\[
e_{\tau_p}(\lambda)
\equiv
\nu_p(\pi_\lambda)
\equiv
\sum_{\beta\geq1}
\beta\,\ell\bigl(\lambda_\beta^{(0)}\bigr)
\mod2.
\]
The result follows.
\end{proof}

\begin{proof}[Proof of Theorem~\ref{thm:main}]
For each $\lambda$, if $\mathcal X_\lambda$ is self-associate, we write $
\mathcal X_\lambda=\{\psi_\lambda\}$. If $\mathcal X_\lambda$ is
non-self-associate, we write $ \mathcal
X_\lambda=\{\psi_\lambda^+,\psi_\lambda^-\}$ with
$\psi_\lambda^-=\varepsilon_N\psi_\lambda^+$, where $\varepsilon_N$
denotes the restriction of the sign character to $N$. 
Using Lemma~\ref{lem:global-local-association} and
bijection~\eqref{eq:bijass}, we obtain a bijection
\[
\Omega:
\Irr_{p'}^{\mathrm{spin}}(\widetilde{\sym}_n)
\longrightarrow
\Irr_{p'}^{\mathrm{spin}}(N)
\]
satisfying $\Omega(\xi_\lambda)=\psi_\lambda$ and
$\Omega(\xi_\lambda^\pm)=\psi_\lambda^{\pm}$.
By Lemma \ref{lem:global-local-galois}, $\Omega$ is $\tau_p$-equivariant.
The result follows.
\end{proof}

Finally, the correspondence descends to the double cover of the
alternating group.
\begin{corollary}
Let $P\in\Syl_p(\widetilde A_n)$. The bijection of
Theorem~\ref{thm:main} induces a $G(p)$-equivariant bijection
\[
\Irr_{p'}^{\mathrm{spin}}(\widetilde A_n)
\longrightarrow
\Irr_{p'}^{\mathrm{spin}}
\bigl(N_{\widetilde A_n}(P)\bigr).
\]
\end{corollary}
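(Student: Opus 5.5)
The idea is to descend the bijection $\Omega$ of Theorem~\ref{thm:main} along the index-two subgroups $\tAlt_n\triangleleft\tSym_n$ and $N_{\tAlt_n}(P)\triangleleft N$, using the index-two Clifford theory of \S\ref{subsec:indice2}. First note that a Sylow $p$-subgroup $P$ of $\tAlt_n$ is also Sylow in $\tSym_n$, since $p$ is odd. Moreover $N_{\tAlt_n}(P)=N\cap\tAlt_n$ has index two in $N=N_{\tSym_n}(P)$ whenever $N\not\leq\tAlt_n$. This holds because $N$ contains lifts of odd elements: for instance $N_\beta\simeq G_p^\beta$ already contains elements with $\varepsilon=-1$ when $n\geq p$, and if $n<p$ the group $P$ is trivial and there is nothing to prove.

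By \S\ref{subsec:indice2}, the spin characters of the index-two subgroup are obtained as follows. Each non-self-associate pair $\xi_\lambda^\pm$ restricts to a single irreducible $\xi_\lambda$. Each self-associate $\xi_\lambda$ restricts to two constituents $\xi_\lambda^\pm$, namely the eigenspaces of an associator. The same description holds on the local side. Degrees are preserved up to a factor $2$, so $p'$-ness is preserved on both sides. Since $\Omega$ is compatible with association and preserves association type (Lemma~\ref{lem:global-local-association}), I will define $\Omega'$ as follows. Set $\Omega'(\xi_\lambda)=\psi_\lambda$ (the common restriction) in the NSA case. In the SA case set $\Omega'(\xi_\lambda^\eta)=\psi_\lambda^{\eta}$, where the labels $\pm$ of the constituents are fixed via the chosen associators. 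Together with the classification of spin characters of the subgroup as constituents of restrictions, this gives a bijection $\Irr_{p'}^{\mathrm{spin}}(\tAlt_n)\to\Irr_{p'}^{\mathrm{spin}}(N_{\tAlt_n}(P))$.

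For equivariance I use Lemma~\ref{lem:galois-index-two}, which treats both orbit types symmetrically. In the NSA case of $\tSym_n$, the restriction to $\tAlt_n$ is $\tau_p$-stable because its association class is stable. Locally the same holds, so $\Omega'$ commutes with $\tau_p$ trivially. In the SA case, Lemma~\ref{lem:galois-index-two} gives $(\xi_\lambda^\eta)^{\tau_p}=\xi_\lambda^{c\eta}$ with $c=c_{\tau_p}(\lambda)$. Locally it gives $(\psi_\lambda^\eta)^{\tau_p}=\psi_\lambda^{c'\eta}$ with $c'=(-1)^{e_{\tau_p}(\mathcal X_\lambda)}$. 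Lemma~\ref{lem:global-local-galois} gives $c=c'$, so $\Omega'$ commutes with $\tau_p$. Since $G(p)$ acts through the cyclic group generated by $\tau_p$, this yields $G(p)$-equivariance.

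The main point to check is that the sign $c_{\tau_p}(\lambda)$ on the global side really governs the action on the $\tAlt_n$-constituents. In the SA global case the relevant datum is the restriction splitting, and $\tau_p$ acts on the difference $\Delta_\lambda$ of \eqref{eq:diffAntilte} by $(-1)^{\nu_p(\pi_\lambda)}$, which is exactly the content of the first theorem of \S\ref{sec:part1}. I also need to confirm that $e_{\tau_p}(\lambda)$ in Lemma~\ref{lem:global-local-galois} was defined uniformly for both orbit types via Lemma~\ref{lem:galois-index-two}, so that the SA case is genuinely covered there and not only the NSA one. If that identification is in doubt, I would instead compare directly the local associator computation (Theorem~\ref{thm:galois-level-beta} and the induced associator $\widetilde J$) with the sign $(-1)^{\nu_p(\pi_\lambda)}$ through Theorem~\ref{thm:globalprat}.
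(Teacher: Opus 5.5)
Your proposal is correct and follows the paper's proof. Both arguments descend $\Omega$ to the index-two subgroups $\tAlt_n$ and $N_{\tAlt_n}(P)=N\cap\tAlt_n$ by index-two Clifford theory, using Lemma~\ref{lem:global-local-association} to match association types and Lemma~\ref{lem:global-local-galois} together with Lemma~\ref{lem:galois-index-two} to get $\tau_p$-equivariance.

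Your extra checks, that $N\not\leq\tAlt_n$ and that $p'$-degree is preserved, fill in details the paper leaves implicit. Your closing worry is unfounded: Notation~\ref{def:galois-bit} defines $e_\tau$ uniformly for both orbit types, and the first theorem of \S\ref{sec:part1} covers both $\mathcal D_n^+$ and $\mathcal D_n^-$.
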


\begin{proof}
Since $p$ is odd, $P$ is also a Sylow $p$-subgroup of
$\widetilde S_n$, and
\[
N_{\widetilde A_n}(P)
=
N_{\widetilde S_n}(P)\cap\widetilde A_n.
\]
By Lemma~\ref{lem:global-local-association}, the bijection $\Omega$
preserves association type. Clifford theory therefore yields a bijection
between the corresponding spin characters of the index-two subgroups,
which is $G(p)$-equivariant by
Lemma~\ref{lem:global-local-galois}.    
\end{proof}

\section{A blockwise version}
\label{sec:blockwise}

We now derive a blockwise version of the preceding correspondence.
Let $B$ be a spin $p$-block of $\tSym_n$, with $p$-bar core $\gamma$
and $p$-bar weight $w$, so that
\[
n=|\gamma|+pw.
\]
Let $D$ be a defect group of $B$ and let $b$ be the Brauer correspondent
of $B$ in $N_{\tSym_n}(D)$. We assume that $w>0$, the defect-zero case
being immediate.
Write the $p$-adic expansion 
\begin{equation}
\label{eq:block-p-adic}
pw=\sum_{\beta\geq1}n_\beta p^\beta,
\end{equation}
with $ 0\leq n_\beta<p$.
In this section, admissible $\beta$-tuples are understood with respect
to the coefficients $n_\beta$ in \eqref{eq:block-p-adic}.
By \cite[Theorem~13.1 and Proposition~13.8]{olsson}, the height-zero
spin characters in $B$ are parametrized by the $p$-bar core towers
$
\bigl(
\gamma;
\mathcal C_1,\mathcal C_2,\ldots
\bigr)$, 
where $\mathcal C_\beta\in\mathfrak C_\beta$ for every $\beta\geq1$.
By \cite[Proposition~13.3]{olsson}, $D$ may be chosen as a Sylow
$p$-subgroup of $\tSym_{pw}$. Put $r=|\gamma|$. Then
\[
N_{\tSym_n}(D)
\simeq
\tSym_r
\widehat\times
N_{\tSym_{pw}}(D),
\]
and \eqref{eq:reduced-local-decomposition} gives
\[
N_{\tSym_{pw}}(D)/D'
\simeq
\widehat{\prod}_{\beta\geq1}H_\beta^+.
\]
Hence, the height-zero association classes of $b$ are also parametrized by
$
\bigl(
\gamma;
\mathcal C_1,\mathcal C_2,\ldots
\bigr)
$
with $\mathcal C_\beta\in\mathfrak C_\beta$. For such a parameter $q$, let
$\mathcal X_q$ denote the corresponding local association class.

\begin{lemma}
\label{lem:block-same-type-bit}
The global and local association classes corresponding to the same
parameter
$
q=
\bigl(
\gamma;
\mathcal C_1,\mathcal C_2,\ldots
\bigr)
$
have the same association type and the same Galois exponent.
\end{lemma}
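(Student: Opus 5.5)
We mimic the proofs of Lemma~\ref{lem:global-local-association} and Lemma~\ref{lem:global-local-galois}, replacing the factor $\tSym_{n_0}$ by the factor $\tSym_r$ labelled by the core $\gamma$. Let $\lambda$ be the bar partition with $p$-bar core tower $q$. Its level $0$ is $\mathcal C_0(\lambda)=(\gamma)$, and its higher levels are the $\mathcal C_\beta$.

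\textbf{Association type.} The global class is self-associate if and only if $n-\ell(\lambda)\equiv0\mod2$. On the local side we have the Humphreys product $\tSym_r\,\widehat\times\,\widehat{\prod}_{\beta\ge1}H_\beta^+$. The first factor is labelled by $\gamma$ and is self-associate if and only if $r-\ell(\gamma)$ is even. For $\beta\geq1$, Theorem~\ref{thm:param-level} shows that the factor labelled by $\mathcal C_\beta$ is self-associate if and only if $n_\beta-\ell(\lambda_\beta^{(0)})$ is even. By \cite[Proposition~1.2]{MichlerOlsson}, the local type is therefore governed by
\[
r-\ell(\gamma)+\sum_{\beta\ge1}\bigl(n_\beta-\ell(\lambda_\beta^{(0)})\bigr)\mod2 .
\]
Since $p$ is odd, \eqref{eq:block-p-adic} gives $n=r+pw\equiv r+\sum_{\beta\ge1}n_\beta\mod2$. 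The parity relation \eqref{eq:length-parity-tower} with $k=0$ gives $\ell(\lambda)\equiv\ell(\gamma)+\sum_{\beta\ge1}\ell(\lambda_\beta^{(0)})\mod2$, because $\alpha_0^{(0)}=\lambda$ and $\lambda_0^{(0)}=\gamma$. Combining these, the global and local types agree.

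\textbf{Galois exponent.} Globally, Theorem~\ref{thm:globalprat} and the theorem on $(\xi_\lambda^\epsilon)^{\tau_p}$ give $e_{\tau_p}(\lambda)\equiv\sum_{\beta\ge1}\beta\,\ell(\lambda_\beta^{(0)})$. Note that $\beta=0$ does not contribute, so $\gamma$ plays no role.

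Locally, height-zero characters of $b$ have $D'$ in their kernel, by the same Clifford argument used for $\overline N$. So we work in $\tSym_r\,\widehat\times\,\widehat{\prod}_\beta H_\beta^+$ and apply Lemma~\ref{lem:BN-propagation}, which gives the sum of the exponents of the factors.

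For $\beta\ge1$, the condition $\sum_j|\lambda_\beta^{(j)}|=n_\beta<p$ makes Theorem~\ref{thm:galois-level-beta} applicable, so factor $\beta$ contributes $\beta\,\ell(\lambda_\beta^{(0)})$.

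The main obstacle is the factor $\tSym_r$, because $r=|\gamma|$ need not be less than $p$. In that case Brauer's theorem no longer shows that its characters are $\tau_p$-fixed. To handle it, we use that $\gamma$ is a $p$-bar core. Its $p$-bar core tower is concentrated in level $0$, so $\ell(\alpha_k^{(0)})=0$ for $k\ge1$, and \eqref{eq:preuvgeint1} gives $\nu_p(\pi_\gamma)=0$. Then the theorem on $(\xi_\gamma^\epsilon)^{\tau_p}$, applied to $\tSym_r$, shows that $\tau_p$ fixes $\xi_\gamma^\pm$ (respectively $\xi_\gamma$ and its two restrictions to $\tAlt_r$). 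Hence $e_{\tau_p}(\gamma)=0$. Equivalently, a part divisible by $p$ could be removed, contradicting that $\gamma$ is a $p$-bar core.

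Summing the contributions, the local exponent is $\sum_{\beta\ge1}\beta\,\ell(\lambda_\beta^{(0)})$, which matches the global one.
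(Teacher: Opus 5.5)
Your proposal is correct and follows the paper's proof essentially step for step. For the type you use the same parity computation via Humphreys' rule, $n\equiv|\gamma|+\sum_{\beta\geq1} n_\beta \pmod 2$, and \eqref{eq:length-parity-tower} with $k=0$; for the Galois exponent you combine $e_{\tau_p}(\gamma)=0$ (because $\gamma$ is a $p$-bar core), Theorem~\ref{thm:galois-level-beta} at each level $\beta\geq1$, and Lemma~\ref{lem:BN-propagation}. Your extra remarks only make explicit what the paper leaves implicit: $n_\beta<p$ guarantees the hypothesis of Theorem~\ref{thm:galois-level-beta}, and $\nu_p(\pi_\gamma)=0$ settles the core factor even when $|\gamma|\geq p$.
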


\begin{proof}
Let $\lambda$ be the bar partition corresponding to $q$. By
Lemma~\ref{lem:typeQi} and \cite[Proposition 1.2]{MichlerOlsson}, the
local association type is determined by
\[
|\gamma|-\ell(\gamma)
+
\sum_{\beta\geq1}
\left(
n_\beta-\ell(\lambda_\beta^{(0)})
\right)
\mod2.
\]
Since $p$ is odd,
\[
n
\equiv
|\gamma|+\sum_{\beta\geq1}n_\beta
\mod2.
\]
Moreover, since $\lambda_0^{(0)}=\gamma$,
applying \eqref{eq:length-parity-tower} with $k=0$, we obtain
\[
\ell(\lambda)
\equiv
\ell(\gamma)
+
\sum_{\beta\geq1}
\ell\bigl(\lambda_\beta^{(0)}\bigr)
\pmod2.
\]
Thus, the local type is equal to $ n-\ell(\lambda)\mod2 $, which is equal
to the global association type.

For the Galois action, the core factor has exponent zero. Indeed,
$\gamma$ is a $p$-bar core, and Theorem~\ref{thm:globalprat} gives
$
e_{\tau_p}(\gamma)=0$.
For $\beta\geq1$, Theorem~\ref{thm:galois-level-beta} gives
$
e_{\tau_p}(\mathcal C_\beta)
\equiv
\beta\,\ell(\lambda_\beta^{(0)})
\mod2$.
Hence, Lemma~\ref{lem:BN-propagation} yields
\[
e_{\tau_p}(\mathcal X_q)
\equiv
\sum_{\beta\geq1}
\beta\,\ell(\lambda_\beta^{(0)})
\pmod2.
\]
By Theorem~\ref{thm:globalprat}, the right-hand side is also the
Galois exponent of the global class labelled by $\lambda$.
\end{proof}

\begin{theorem}
\label{thm:blockwise-main}
Let $B$ be a spin $p$-block of $\tSym_n$, let $D$ be a defect group of
$B$, and let $b$ be its Brauer correspondent in $N_{\tSym_n}(D)$.
Then there exists an association-compatible $G(p)$-equivariant
bijection
\[
\Omega_B:
\Irr_0^{\mathrm{spin}}(B)
\longrightarrow
\Irr_0^{\mathrm{spin}}(b).
\]
In particular, the numbers of $p$-rational height-zero spin characters of
$B$ and of $b$ are equal.
\end{theorem}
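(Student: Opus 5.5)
The plan is to copy the proof of Theorem~\ref{thm:main}, with Lemma~\ref{lem:block-same-type-bit} playing the role of Lemmas~\ref{lem:global-local-association} and~\ref{lem:global-local-galois}.

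\emph{Defect zero.} If $w=0$, then $D=1$, $N_{\tSym_n}(D)=\tSym_n$ and $b=B$. In this case we take $\Omega_B$ to be the identity, and there is nothing to prove.

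\emph{Pairing association classes.} Assume $w>0$. By \cite[Theorem~13.1 and Proposition~13.8]{olsson}, the association classes of height-zero spin characters of $B$ correspond bijectively to parameters $q=(\gamma;\mathcal C_1,\mathcal C_2,\ldots)$ with $\mathcal C_\beta\in\mathfrak C_\beta$. On the local side, let $\bar b$ be the block of $N_{\tSym_n}(D)/D'$ dominated by $b$. The height-zero spin characters of $b$ are those with $D'$ in their kernel, by the same Clifford and It\^o argument used for $\overline N$ in Section~\ref{sec:part2}. Since $D$ is normal, the block $b$ is determined by its restriction to the centralizer part. Its spin characters are therefore those whose $\tSym_r$-factor is the defect-zero character labelled by $\gamma$, twisted-tensored with an arbitrary spin character of $\widehat{\prod}_{\beta\ge1}H_\beta^+$. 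By Theorem~\ref{thm:param-level}, these are parametrized up to association by the same $q$, which gives the local classes $\mathcal X_q$. Matching global classes with the classes $\mathcal X_q$ gives a bijection of association classes.

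\emph{Building $\Omega_B$.} By the first half of Lemma~\ref{lem:block-same-type-bit}, corresponding classes have the same association type. So we can lift the bijection of classes to characters as in the proof of Theorem~\ref{thm:main}: send $\xi_\lambda$ to $\psi_q$, and send $\xi_\lambda^\pm$ to $\psi_q^\pm$ with $\psi_q^-=\varepsilon\psi_q^+$. This map is compatible with association by construction.

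\emph{Equivariance.} For $G(p)$-equivariance it suffices to check $\tau_p$, which generates the relevant Galois group. Both sides are stable under $\tau_p$ as sets. On the global side, $\tau_p$ preserves degrees and blocks. On the local side, Galois conjugation preserves $b$ and heights. By Lemma~\ref{lem:galois-index-two}, $\tau_p$ either fixes each member of a class or swaps the two members, according to the exponent $e_{\tau_p}$. The second half of Lemma~\ref{lem:block-same-type-bit} says these exponents agree on the global and local sides, so $\Omega_B$ commutes with $\tau_p$. The count of $p$-rational height-zero spin characters then follows by counting fixed points.

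\emph{Main obstacle.} The hypothesis $|\lambda_\beta^{(j)}|<p$ in Theorem~\ref{thm:galois-level-beta} holds automatically, since $n_\beta<p$. The delicate step is identifying the block $b$ inside $N_{\tSym_n}(D)\simeq\tSym_r\widehat\times N_{\tSym_{pw}}(D)$. One must check that $b$ corresponds to the spin defect-zero character of $\tSym_r$ labelled by $\gamma$, including the case where this character is non-self-associate. In that case the Humphreys product pairs $\xi_\gamma^\pm$ with the $H_\beta^+$-factors, and one must confirm that exactly the expected associates lie in $b$. I would do this through the Brauer correspondence applied to $C_{\tSym_n}(D)=\tSym_r\widehat\times Z(D)$-type data, using \cite[Proposition~13.3]{olsson}.
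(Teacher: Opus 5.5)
Your proposal is correct and is essentially the paper's proof. Both sides are parametrized by $(\gamma;\mathcal C_1,\mathcal C_2,\ldots)$, Lemma~\ref{lem:block-same-type-bit} gives matching association type and Galois exponent, and identifying equal parameters yields $\Omega_B$. The identification of $b$ that you flag as the main obstacle is only asserted in the paper, just before that lemma, and your plan works: conjugation by an odd element of $N_{\tSym_{pw}}(D)$ acts on $\tSym_r$ by $x\mapsto z^{\partial(x)}x$, so it turns $\xi_\gamma^+$ into $\xi_\gamma^-$, and hence both associates of the core factor are $N_{\tSym_n}(D)$-conjugate and covered by $b$.
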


\begin{proof}
The global and local height-zero association classes are parametrized by
the same collections $ \bigl( \gamma; \mathcal C_1,\mathcal C_2,\ldots
\bigr)$ with $\mathcal C_\beta\in\mathfrak C_\beta$ for every
$\beta\geq1$.
By Lemma~\ref{lem:block-same-type-bit}, corresponding classes have the
same association type and the same Galois exponent. Identifying equal
parameters therefore gives an association-compatible
$\tau_p$-equivariant bijection, and hence a $G(p)$-equivariant
bijection. 
\end{proof}

\begin{corollary}
\label{cor:blockwise-alternating}
Let $B_A$ be a spin $p$-block of $\tAlt_n$, let $D$ be a defect
group of $B_A$, and let $b_A$ be its Brauer
correspondent in $N_{\tAlt_n}(D)$. Then there exists a
$G(p)$-equivariant bijection
\[
\Irr_0^{\mathrm{spin}}(B_A)
\longrightarrow
\Irr_0^{\mathrm{spin}}(b_A).
\]
\end{corollary}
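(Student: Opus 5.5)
The plan is to mirror the proof of the alternating corollary following Theorem~\ref{thm:main}, now applied to the blockwise bijection $\Omega_B$ of Theorem~\ref{thm:blockwise-main}.

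\textbf{Step 1: relate blocks of $\tAlt_n$ and $\tSym_n$.} Since $p$ is odd, a spin block $B_A$ of $\tAlt_n$ is covered by a unique spin block $B$ of $\tSym_n$. Equivalently, the constituents of restrictions of characters in $B$ exhaust $B_A$, because the association classes in a spin block are unions with their $\varepsilon$-twists: $p$-bar core and weight are preserved by association. Moreover, $B$ and $B_A$ share a defect group $D\leq\tAlt_n$. Indeed, $D$ is a $p$-group, hence lies in $\tAlt_n$, and $[\tSym_n:\tAlt_n]=2$ is prime to $p$.

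\textbf{Step 2: relate the Brauer correspondents.} We have
\[
N_{\tAlt_n}(D)=N_{\tSym_n}(D)\cap\tAlt_n,
\]
which is of index two in $N_{\tSym_n}(D)$. The Brauer correspondent $b_A$ is covered by $b$, by the compatibility of the Brauer correspondence with covering for normal subgroups of $p'$-index. Heights are preserved under restriction and induction between these index-two pairs, since the degree changes only by a factor $1$ or $2$ and $p$ is odd. Hence $\Irr_0^{\mathrm{spin}}(B_A)$ consists exactly of the constituents of restrictions of characters in $\Irr_0^{\mathrm{spin}}(B)$, and similarly for $b_A$ and $b$.

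\textbf{Step 3: descend via index-two Clifford theory (\S\ref{subsec:indice2}).} Each self-associate class $\{\chi\}$ restricts to two characters $\chi^\pm$. Each non-self-associate pair restricts to a single character. By Lemma~\ref{lem:block-same-type-bit}, $\Omega_B$ matches association types, so we define the bijection on $\tAlt_n$ by:
\begin{itemize}
\item sending $\chi^{\pm}$ to $\Omega_B(\chi)^{\pm}$ in the self-associate case, with the labelling of the $\pm$ constituents given by the associators $J$;
\item sending the common restriction to the common restriction in the non-self-associate case.
\end{itemize}

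\textbf{Step 4: equivariance.}
\begin{itemize}
\item In the non-self-associate case, the restriction is $\tau_p$-fixed on both sides, because $\tau_p$ preserves association classes (Lemma~\ref{lem:galois-index-two}).
\item In the self-associate case, Lemma~\ref{lem:galois-index-two} shows that $\tau_p$ exchanges $\chi^+$ and $\chi^-$ exactly when the Galois exponent equals $1$. Lemma~\ref{lem:block-same-type-bit} gives equality of exponents on the global and local sides, so the map is $\tau_p$-equivariant.
\end{itemize}
Since $\tau_p$ restricts to a generator of the relevant cyclic Galois group, $\tau_p$-equivariance yields $G(p)$-equivariance.

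\textbf{Main obstacle.} The delicate point is Steps 1--2: checking that the block $B_A$ and its Brauer correspondent $b_A$ are exactly the restrictions of $B$ and $b$. In particular, one must rule out that the two constituents $\chi^\pm$ fall into different blocks of $\tAlt_n$ or of $N_{\tAlt_n}(D)$. I would handle this using the fact that $D$ is centralized by no element swapping them, or by invoking Fong's index-two block results for $p$ odd. Under either route, $B$ covers a single block of the index-two subgroup, since that subgroup has $p'$ index and $B$ is stable under $\varepsilon$.
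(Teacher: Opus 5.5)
Your outline is the same as the paper's proof. You pass to the spin block $B$ of $\tSym_n$ over $B_A$, and you match Brauer correspondents (the paper quotes Michler--Olsson's Lemma~2.3 where you invoke Harris--Kn\"orr). You then descend $\Omega_B$ by index-two Clifford theory, using that it preserves association type, and read off equivariance from Lemmas~\ref{lem:galois-index-two} and~\ref{lem:block-same-type-bit}. Steps~3 and~4 are fine.

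\textbf{The gap.} It lies in how you settle the covering statements you yourself flag. The principle you end with is that $B$ covers a single block of $\tAlt_n$ \emph{because} the index is prime to $p$ and $\varepsilon B=B$. This is false. Take a $\bar p$-core $\gamma$ with $|\gamma|-\ell(\gamma)$ even (e.g.\ $\gamma=(4,2)$, $p=5$). Then $\{\xi_\gamma\}$ is an $\varepsilon$-stable block of defect zero, yet it covers the two distinct blocks $\{\xi_\gamma^+\}$ and $\{\xi_\gamma^-\}$ of $\tAlt_{|\gamma|}$.

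The uniqueness asserted in Step~1 ``since $p$ is odd'' also fails, namely when $|\gamma|-\ell(\gamma)$ is odd. There $\{\xi_\gamma^+\}$ and $\{\xi_\gamma^-\}$ are two different blocks of $\tSym_{|\gamma|}$ over $\{\xi_\gamma\}$. What $\varepsilon$-stability of $B$ actually gives is that $B$ is the \emph{only} block of $\tSym_n$ over $B_A$, since the blocks of $\tSym_n$ lying over a given block of $\tAlt_n$ form a single $\varepsilon$-orbit when $p$ is odd. It says nothing about how many blocks $B$ covers, and your centralizer route is not made precise.

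\textbf{The fix.} The missing input is the standing hypothesis $w>0$, which your argument never uses. If $w>0$, then $B$ contains bar partitions with $\bar p$-core $\gamma$ of both length parities: for example $\gamma\cup\{pw\}$ and the partition obtained by adding $pw$ to the largest part of $\gamma$, or $(pw)$ and $(pw-1,1)$ if $\gamma=\varnothing$. Consequently:
\begin{enumerate}[(a)]
\item $B$ contains a self-associate character, so $\varepsilon B=B$ and $B$ is the unique block over $B_A$.
\item $B$ contains a non-self-associate pair, whose common restriction is irreducible and $\tSym_n$-invariant. Since $B$ covers a single $\tSym_n$-orbit of blocks of $\tAlt_n$, that orbit is $\{B_A\}$.
\end{enumerate}

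On the local side, Harris--Kn\"orr then makes $b$ the unique block of $N_{\tSym_n}(D)$ over $b_A$. Moreover, $b_A$ is $N_{\tSym_n}(D)$-stable because its Brauer correspondent $B_A$ is $\tSym_n$-stable, so $b$ covers only $b_A$.

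The defect-zero case must be handled separately; it is trivial, since there $b_A=B_A$. With these repairs your argument is complete and agrees with the paper's.
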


\begin{proof}
Let $B$ be the spin $p$-block of $\tSym_n$ corresponding to $B_A$.
Since $p$ is odd, $B$ and $B_A$ have the same defect group $D$.
Moreover, by~\cite[Lemma~2.3]{MichlerOlsson}, their Brauer
correspondents $b$ and $b_A$ are corresponding blocks.

Since the bijection of Theorem~\ref{thm:blockwise-main} preserves
association type, Clifford theory induces the corresponding bijection for
the index-two subgroups. Its $G(p)$-equivariance follows from
Lemmas~\ref{lem:block-same-type-bit} and~\ref{lem:galois-index-two}.
\end{proof}

{\bf Acknowledgements.} The first author acknowledges the support of the
ANR project CORTIPOM ANR-21-CE40-0019. The second author also acknowledges
the support of PSC CUNY grant GR-00017903 . 
The authors would like to thank the American Institute of Mathematics
(AIM) for its generous support and hospitality during a SQuaRE program,
\emph{Towards the Galois-McKay Conjecture}, where substantial progress on
this project was made. 

\bibliographystyle{abbrv}
\bibliography{references_19_1}

\end{document}